\documentclass{amsart}
\usepackage{amssymb}
\usepackage[all]{xy}
\usepackage{tikz}
\usepackage{multirow}
\usetikzlibrary{matrix,arrows,shapes}

\newcommand{\Z}{\mathbb{Z}}
\newcommand{\N}{\mathbb{N}}
\newcommand{\Q}{\mathbb{Q}}
\newcommand{\C}{\mathbb{C}}
\newcommand{\cK}{\mathcal{K}}
\newcommand{\cO}{\mathcal{O}}
\newcommand{\cOm}{\mathcal{O}^-}

\newcommand{\Sat}{\mathrm{Satake}}
\newcommand{\Spr}{\mathrm{Springer}}
\newcommand{\bSpr}{{\mathbf{Spr}}}
\newcommand{\Rep}{\mathrm{Rep}}
\newcommand{\Perv}{\mathrm{Perv}}
\DeclareMathOperator{\Spec}{Spec}
\newcommand{\cH}{\mathcal{H}}
\newcommand{\IC}{\mathrm{IC}}

\newcommand{\cF}{\mathcal{F}}

\newcommand{\uC}{\underline{\mathbb{C}}}

\newcommand{\GO}{G(\mathcal{O})}
\newcommand{\GOm}{G(\mathcal{O}^-)}
\newcommand{\fG}{\mathfrak{G}}
\newcommand{\HO}{H(\cO)}

\newcommand{\fg}{\mathfrak{g}}
\newcommand{\fh}{\mathfrak{h}}

\newcommand{\cM}{\mathcal{M}}
\newcommand{\cN}{\mathcal{N}}
\newcommand{\cNsm}{\mathcal{N}_{\mathrm{sm}}}
\newcommand{\cNv}{\check{\mathcal{N}}}
\newcommand{\Gr}{\mathsf{Gr}}
\newcommand{\Grsm}{\mathsf{Gr}_{\mathrm{sm}}}
\newcommand{\Grom}{\mathsf{Gr}_{0}^{-}}
\newcommand{\bo}{\mathbf{o}}
\newcommand{\cw}{\mathbf{t}}
\newcommand{\sm}{\mathrm{sm}}
\newcommand{\mn}{\mathrm{min}}
\newcommand{\pip}{\pi^\dag}

\newcommand{\Gv}{{\check G}}
\newcommand{\fgv}{\check{\mathfrak{g}}}
\newcommand{\Tv}{{\check T}}
\newcommand{\ftv}{\check{\mathfrak{t}}}
\newcommand{\lambdav}{{\check\lambda}}
\newcommand{\Lambdav}{{\check\Lambda}}
\newcommand{\Lambdavsm}{{\check\Lambda}_{\mathrm{sm}}}
\newcommand{\muv}{{\check\mu}}
\newcommand{\nuv}{{\check\nu}}
\newcommand{\alphav}{{\check\alpha}}
\newcommand{\omegav}{{\check\omega}}

\newcommand{\cP}{\mathcal{P}}
\newcommand{\Mat}{\mathrm{Mat}}

\newcommand{\simto}{\overset{\sim}{\to}}
\newcommand{\id}{\mathrm{id}}
\newcommand{\Coinv}{\mathrm{Coinv}}
\DeclareMathOperator{\rk}{rk}
\newcommand{\Hom}{\mathrm{Hom}}
\newcommand{\la}{\langle}
\newcommand{\ra}{\rangle}

\numberwithin{equation}{section}
\newtheorem{thm}{Theorem}[section]
\newtheorem{lem}[thm]{Lemma}
\newtheorem{prop}[thm]{Proposition}
\newtheorem{cor}[thm]{Corollary}
\newtheorem{conj}[thm]{Conjecture}
\theoremstyle{definition}
\newtheorem{defn}[thm]{Definition}
\theoremstyle{remark}
\newtheorem{rmk}[thm]{Remark}

\title[Satake, Springer, small]{Geometric Satake, Springer correspondence,\\ and small representations}
\author{Pramod N. Achar}
\address{Department of Mathematics\\
  Louisiana State University\\
  Baton Rouge, LA 70803\\
  U.S.A.}
\email{pramod@math.lsu.edu}
\author{Anthony Henderson}
\address{School of Mathematics and Statistics\\
  University of Sydney, NSW 2006\\
  Australia}
\email{anthony.henderson@sydney.edu.au}

\subjclass[2010]{Primary 17B08, 20G05; Secondary 14M15}

\begin{document}

\begin{abstract}
For a simply-connected simple algebraic group $G$ over $\C$, we exhibit a subvariety of its affine Grassmannian that is closely related to the nilpotent cone of $G$, generalizing a well-known fact about $GL_n$.  Using this variety, we construct a sheaf-theoretic functor that, when combined with the geometric Satake equivalence and the Springer correspondence, leads to a geometric explanation for a number of known facts (mostly due to Broer and Reeder) about small representations of the dual group.
\end{abstract}

\maketitle

\section{Introduction}
\label{sect:intro}

Let $G$ be a simply-connected simple algebraic group over $\C$, and $\Gv$ its Langlands dual group. Let $T$ and $\Tv$ be corresponding maximal tori of $G$ and of $\Gv$, and let $W$ be the Weyl group of either (they are canonically identified). Recall that an irreducible representation $V$ of $\Gv$ is said to be \emph{small} if no weight of $V$ is twice a root of $\Gv$. For such $V$, the representation of $W$ on the zero weight space $V^{\Tv}$ has various special properties, mostly due to Broer and Reeder \cite{broer,reeder1,reeder2,reeder3}. 

The aim of this paper is to give a geometric explanation of these properties, using the geometric Satake equivalence (see \cite{lusztig:scf,ginzburg,mirkovicvilonen}) and the Springer correspondence (see \cite{carter}). The idea of explaining \cite{broer} using geometric Satake was suggested to us by Ginzburg; the idea of explaining \cite{reeder1} using perverse sheaves on the affine Grassmannian was suggested to Reeder by Lusztig, as mentioned in \cite{reeder2}. 

Let $\Gr$ and $\cN$ denote the affine Grassmannian and the nilpotent cone of $G$, respectively, and consider the diagram
\begin{equation} \label{eqn:sss}
\vcenter{\xymatrix@C=40pt{
\Rep(\Gv) \ar[r]^-{\Sat}_-\sim \ar[d]_{\Phi}
 & \Perv_{\GO}(\Gr) \\
\Rep(W) \ar[r]_{\Spr} & \Perv_G(\cN)}}
\end{equation}
Here $\Phi$ is the functor $V \mapsto V^\Tv \otimes \epsilon$ where $\epsilon$ denotes the sign representation of $W$.  We will construct a functor which completes diagram~\eqref{eqn:sss} to a commuting square, after restricting the top line to the subcategories corresponding to small representations.

Let $\Grsm \subset \Gr$ be the closed subvariety corresponding to small representations under geometric Satake, and let $\cM \subset \Grsm$ be the intersection of $\Grsm$ with the `opposite Bruhat cell' $\Grom$.  (See Section~\ref{sect:not} for detailed definitions.)  $\cM$ is a $G$-stable dense open subset of $\Grsm$.  Our first result is the following.

\begin{thm} \label{thm:mn}
There is an action of $\Z/2\Z$ on $\cM$, commuting with the $G$-action, and a finite $G$-equivariant map $\pi: \cM \to \cN$ that induces a bijection between $\cM/(\Z/2\Z)$ and a certain closed subvariety $\cNsm$ of $\cN$. 
\end{thm}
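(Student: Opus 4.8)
The plan is to work entirely inside the opposite cell $\Grom$ and to exploit the loop-rotation $\C^\times$-action. Recall that $\Grom$ is the orbit of $\bo$ under $\ker(\GOm\to G)$, a pro-unipotent group with pro-nilpotent Lie algebra $\cw^{-1}\fg[[\cw^{-1}]]$; loop rotation contracts $\Grom$ onto $\bo$, placing $\fg\cw^{-k}$ in degree $k\ge 1$. Choosing exponential coordinates compatible with this grading identifies $\Grom$, $G\times\C^\times$-equivariantly, with $\prod_{k\ge1}\fg$, on which $G=G(\C)$ acts on each factor by the adjoint representation and $\C^\times$ by weight $k$ on the $k$-th factor; in particular there is a $G$-equivariant and $\C^\times$-equivariant projection $p\colon\Grom\to\fg$ onto the degree-one factor (with $\fg$ carrying the weight-one scaling action), and a point of $\Grom$ can be written $u\cdot\bo$ with $u=\exp(x\cw^{-1}+x_2\cw^{-2}+\cdots)$ and $x=p(u\cdot\bo)$. (For $G=\mathrm{SL}_n$ this is the classical lattice description, in which $p$ reads off the nilpotent endomorphism induced by $\cw$; this is the ``well-known fact about $GL_n$'' being generalized.) I will take $\pi$ to be a suitable rescaling of $p|_\cM$, and the $\Z/2\Z$-action to be the natural symmetry exchanging the sheets of $\cM$ over its image.

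The crucial step --- and the one I expect to be the main obstacle --- is to show that $\cM$ is concentrated in bounded degree and to pin the bound down. By geometric Satake $\Grsm=\bigcup_\muv\overline{\Gr^\muv}$, the union over those dominant $\muv$ for which $V_\muv$ is small, and $\overline{\Gr^\muv}=\bigsqcup_{\nuv\preceq\muv}\Gr^\nuv$. For $u\cdot\bo\in\cM$ the condition that it lie in the \emph{bounded} set $\overline{\Gr^\muv}$ constrains which coefficients $x_i$ of $\log u$ can be nonzero --- roughly, the $\C^\times$-weights occurring on $\Gr^\nuv\cap\Grom$ are controlled by how far $\nuv$ lies below $\muv$ in the dominance order --- and the Broer--Reeder classification of small weights is precisely strong enough to force every $x_i$ of high degree to vanish. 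I expect this to require a case analysis over the finite, explicit list of maximal small $\muv$; its conclusion will be that $\cM$ lies inside a fixed finite product $\prod_{k=1}^{N}\fg\hookrightarrow\Grom$, with $x_2,\dots,x_N$ constrained by $G$-equivariant homogeneous polynomial conditions admitting at most two solution branches over a generic $x$ (consistent with the appearance of $\Z/2\Z$).

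Granting this, the rest is comparatively formal. The image $\pi(\cM)$ is a closed, $G$-stable, conical subvariety of $\fg$ (conical because $\cM$ is $\C^\times$-stable and $p$ is $\C^\times$-equivariant), and it lies in $\cN$: an element $x$ with nonzero semisimple part cannot occur, since then $\exp(x\cw^{-1}+\cdots)$ has entries that are not formal Laurent series, hence defines no point of $\Gr$ at all; thus $\cNsm:=\pi(\cM)$ is a union of nilpotent orbit closures. Moreover $\pi$ is finite: $\cM$ and $\cN$ are affine cones and $\pi$ is $\C^\times$-equivariant with positive weights, so by graded Nakayama finiteness reduces to finiteness of the fibre $\pi^{-1}(0)$, and $\pi^{-1}(0)=\{\bo\}$, since by the previous paragraph a point of $\cM$ with $x=0$ has all of its higher coefficients $x_i$ equal to $0$ as well.

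It remains to produce the involution and to prove bijectivity on the quotient. Over the open locus of $\cNsm$ where $\pi$ is \'etale of degree $2$ --- equivalently, where the two branches of the constraint $x_2=x_2(x)$ disagree --- let $\sigma$ be the nontrivial deck transformation, i.e.\ the exchange of the two branches. One checks that $\sigma$ extends to a morphism of $\cM$ (which may be singular and even reducible: for instance, for $G=\mathrm{SL}_n$ with $n\ge2$, $\Grsm$ typically has two components swapped by the diagram automorphism) and that it commutes with the $G$-action, since for each $x$ it is simply the transposition of the at-most-two points of $\pi^{-1}(x)$, which the stabilizer $G_x$ also permutes; when $\cM$ is irreducible, so that $\pi$ is already injective, $\sigma=\id$. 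By construction $\sigma$ acts transitively on the fibres of $\pi$, so $\pi$ descends to a continuous bijection $\cM/(\Z/2\Z)\simto\cNsm$. The one delicate point that remains is to verify this transitivity on the \emph{non-generic} fibres as well, which again comes down to the explicit low-degree description of $\cM$ obtained in the second step.
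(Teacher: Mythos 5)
Your framework is close to the paper's: you have correctly identified the map $\pip\colon\Grom\to\fg$ (projection onto the $t^{-1}$ coefficient, i.e.\ the paper's composite of \eqref{eqn:grom-fg-isom} and \eqref{eqn:fg-fg-homom}) and you correctly recognize that a case analysis over small coweights will be needed. However, there are several genuine gaps, and one of your claims is false.

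\textbf{The involution.} The paper constructs the $\Z/2\Z$-action intrinsically \emph{before} any analysis: $\iota(g)=\theta(g^{-1})$, where $\theta$ is induced by $t\mapsto -t$. This is a well-defined involution of $\Grom$ with $\pip\circ\iota=\pip$ (Lemma~\ref{lem:iota-pi}), proved in one line using the fact that negation is inversion on the additive group $\fg$. You instead try to \emph{produce} the involution a posteriori as the deck transformation over the \'etale locus and then assert that it extends to all of $\cM$. That extension is a real claim and you have not proved it; indeed, on the singular locus it is not automatic that exchanging branches is even a morphism. The paper's order of operations (define $\iota$ first, then show via Proposition~\ref{prop:general} that it acts transitively on fibres) avoids this entirely.

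\textbf{A false claim.} You write that ``when $\cM$ is irreducible, so that $\pi$ is already injective, $\sigma=\id$.'' This is wrong: irreducibility of $\cM$ does not imply injectivity of $\pi$. In type $B$, for instance, $\cM$ is irreducible (Lemma~\ref{lem:B-small}), yet for $j\geq 1$ the piece $\cM_{(21^{2j}0^{n-2j-1})}$ contains an orbit on which $\pi$ is a nontrivial $2$-fold \'etale cover (Proposition~\ref{prop:B-orbits}(3)). The same phenomenon occurs in types $E_7$, $E_8$, $F_4$, $G_2$. Your argument for $G$-equivariance and well-definedness of $\sigma$ hinges on this false dichotomy.

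\textbf{The image lies in $\cN$.} Your reason --- that $\exp(xt^{-1}+\cdots)$ fails to be a Laurent series when $x$ has a semisimple part --- cannot be a general principle about $\Grom$, because it is false there: Lemma~\ref{lem:minnotsmall} exhibits $g\in\fG$ (in $SL_2$) with $g\cdot\bo\in\Gr_{2\alphav_0}\cap\Grom$ and $\pip(g\cdot\bo)$ semisimple nonzero. So smallness must be used in a load-bearing way to show $\pi(\cM)\subset\cN$. In the paper this is part of the case-by-case content of Proposition~\ref{prop:general}; you acknowledge needing a case analysis for boundedness but then try to conclude nilpotency by a soft argument that provably does not work.

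\textbf{Where the work actually is.} You correctly flag the boundedness/explicit description of $\cM$ as the crux, but you do not carry it out. The paper's Sections 4 and 5 are devoted precisely to this: in classical types via explicit matrix computations (Propositions~\ref{prop:A-M}, \ref{prop:C-M}, \ref{prop:B-M}, \ref{prop:D-M}), and in exceptional types via embeddings into classical subgroups, dimension counts, and the Lusztig--Shoji algorithm. That analysis yields Proposition~\ref{prop:general}, from which Theorem~\ref{thm:mn} follows by a short formal argument: Lemma~\ref{lem:iota-pi} shows each fibre is a union of $\iota$-orbits, Proposition~\ref{prop:general} shows each fibre is one such orbit, and finiteness of $\pi$ follows from finiteness over the open orbit plus Lemma~\ref{lem:finite-map} (using that nilpotent orbits have even dimension so the boundary has codimension $\geq 2$). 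Your graded-Nakayama argument for finiteness is a legitimate alternative route, but it relies on the undone boundedness step to conclude $\pi^{-1}(0)=\{\bo\}$.
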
 

The bijection mentioned in Theorem~\ref{thm:mn} is an isomorphism at least in types other than $E$; see Proposition~\ref{prop:isom}.  In type $A$, Theorem \ref{thm:mn} is well known, but usually phrased differently. In this case, $\cN$ can be embedded in $\Gr$ in two ways. $\cM$ is the union of the two embeddings and the $\Z/2\Z$-action interchanges them, so $\cNsm$ is the whole of $\cN$; see Section~\ref{subsect:a}. In general, the $\Z/2\Z$-action is related to the operation of passing from a representation of $\Gv$ to its dual, in a way which will be made precise in Remark~\ref{rmk:iota-dual}.

In type $E$, Theorem \ref{thm:mn} supplies smooth varieties mapping to certain special pieces in $\cN$, confirming a conjecture of Lusztig in at least one new case; see Proposition~\ref{prop:lusztig}.  As another application, we will establish a new characterization of small representations (the notation is defined in Section~\ref{sect:not}):

\begin{thm} \label{thm:small}
Let $V$ be an irreducible $\Gv$-representation with highest weight $\lambdav$.  Then $V$ is small if and only if $G$ acts with finitely many orbits in $\Gr_\lambdav\cap\Grom$.
\end{thm}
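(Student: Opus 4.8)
The ``only if'' direction follows at once from Theorem~\ref{thm:mn}. If $V = V(\lambdav)$ is small then $\overline{\Gr_\lambdav} \subseteq \Grsm$, so $\Gr_\lambdav \cap \Grom \subseteq \Grsm \cap \Grom = \cM$; since $\pi\colon \cM \to \cN$ is finite and $G$-equivariant and $G$ acts on $\cN$ with finitely many orbits, the preimage under $\pi$ of any $G$-orbit is a $G$-stable subvariety finite over a homogeneous space, hence a finite union of $G$-orbits, and so $\cM$ --- and with it the $G$-stable subset $\Gr_\lambdav \cap \Grom$ --- has finitely many $G$-orbits.

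For the converse I would argue the contrapositive in three steps. \emph{First}, I reduce to the $\GO$-orbit $\Gr_\lambdav$ itself: since $\Gr_\lambdav \cap \Grom$ is dense in the irreducible variety $\Gr_\lambdav$ (being the intersection of $\Gr_\lambdav$ with the transverse slice $\overline{\Gr_\lambdav} \cap \Grom$, it has full dimension in $\Gr_\lambdav$), a finite family of $G$-orbits exhausting $\Gr_\lambdav \cap \Grom$ would contain one of full dimension, which would then be dense in $\Gr_\lambdav$; so it suffices to show that $\Gr_\lambdav$ has no dense $G$-orbit whenever $V(\lambdav)$ is not small. \emph{Second}, I observe that $V(\lambdav)$ fails to be small if and only if $2\theta \le \lambdav$, where $\theta$ is the highest root of $\Gv$: a weight of $V(\lambdav)$ of the form $2\alphav$ (with $\alphav$ a root of $\Gv$) has a dominant $W$-translate $2\beta$ with $\beta$ a \emph{dominant} root, necessarily the highest root $\theta$, so $2\theta$ is itself a dominant weight of $V(\lambdav)$ and thus $2\theta \le \lambdav$; the reverse implication is immediate. \emph{Third}, realizing $\Gr_\lambdav$ as a fibre bundle over the partial flag variety $G/P_\lambdav$ --- with $P_\lambdav \subseteq G$ the parabolic attached to the cocharacter $\lambdav$, and typical fibre an affine variety $F_\lambdav$ on which $P_\lambdav$ acts --- one has that $\Gr_\lambdav$ carries a dense $G$-orbit if and only if $P_\lambdav$ carries a dense orbit on $F_\lambdav$, so the problem becomes to show this fails for every dominant coweight $\lambdav \ge 2\theta$.

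This last assertion is the technical heart of the theorem, and I expect it to be the main obstacle. For the minimal weight $\lambdav = 2\theta$ the plan is to describe $F_{2\theta}$ explicitly as a $P_{2\theta}$-variety and rule out a dense orbit: this is immediate from the crude count $\dim \Gr_{2\theta} > \dim G$ when that inequality holds, but it already fails for $G$ of type $A_2$ (where $\dim \Gr_{2\theta} = \dim G$) and fails by a wide margin in types $E$, so there a sharper input is needed --- for instance an explicit one-parameter family of points of $F_{2\theta}$ lying in pairwise distinct $P_{2\theta}$-orbits, or a non-constant $P_{2\theta}$-invariant rational function. To pass from $\lambdav = 2\theta$ to a general $\lambdav \ge 2\theta$, one would establish a monotonicity statement --- that a dense $G$-orbit on $\Gr_\lambdav$ forces one on $\Gr_\muv$ for every dominant $\muv \le \lambdav$, e.g.\ by exhibiting $F_\muv$ as a degeneration of $F_\lambdav$ or through the convolution morphism relating $\overline{\Gr_{\lambdav - \muv}}$, $\overline{\Gr_\muv}$ and $\overline{\Gr_\lambdav}$ along chains in which $\lambdav - \muv$ is dominant --- and then invoke the minimal case. (An alternative would be to work throughout with the slice $\overline{\Gr_\lambdav} \cap \Grom$, its stratification by the $\Gr_\muv \cap \Grom$, and its contracting $\C^\times$-action; but transferring the failure of orbit-finiteness from $\overline{\Gr_{2\theta}} \cap \Grom$ into the \emph{open} stratum $\Gr_\lambdav \cap \Grom$ is precisely the delicate point, so I would expect this route to need much the same input.)
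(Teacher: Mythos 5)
The ``only if'' direction of your argument is correct and uses the same input as the paper --- namely Theorem~\ref{thm:mn} (itself resting on Proposition~\ref{prop:general}); the paper phrases it as an equivalence of four conditions, but the content agrees.

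The converse is where there is a real gap, and also a small mathematical error. On the error first: you write that $V_\lambdav$ fails to be small if and only if $2\theta\le\lambdav$, where $\theta$ is the highest root of $\Gv$, justified by the claim that a dominant root of $\Gv$ is necessarily $\theta$. In a non-simply-laced group there are two dominant roots, the highest root $\theta$ and the highest short root $\alphav_0$, and since $\alphav_0<\theta$ the correct criterion (the one the paper records in Section~\ref{sect:not}) is $2\alphav_0\le\lambdav$. This is not cosmetic: the set $\{\lambdav:\lambdav\ge 2\theta\}$ is strictly smaller than $\{\lambdav:\lambdav\ge 2\alphav_0\}$ in types $B$, $C$, $F_4$, $G_2$, so even if your program succeeded it would only handle a proper subset of the non-small $\lambdav$.

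The larger issue is that your plan for the hard direction is not a proof but a list of obstacles. You propose to show that for non-small $\lambdav$ the stratum $\Gr_\lambdav$ has no dense $G$-orbit, reducing via the fibration $\Gr_\lambdav\to G/P_\lambdav$ to an orbit problem on its fibre, then to the minimal case $\lambdav=2\alphav_0$ plus a monotonicity statement. You yourself flag that the dimension count fails already in type $A_2$ and that you would need either an explicit family of pairwise-inequivalent points or an invariant rational function, together with an unproven monotonicity lemma. None of this is carried out.

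The paper's route is quite different and far more economical, and you should compare them. Rather than trying to analyse orbits on $\Gr_\lambdav$ directly, the paper uses the map $\pip\colon\Grom\to\fg$ that has already been set up. The two key observations are: (i) if $Y$ is a dense $G$-orbit in $\Gr_\lambdav\cap\Grom$, then $\overline{\pip(Y)}$ contains $\pip(\bo)=0$, and a $G$-orbit in $\fg$ whose closure contains $0$ is automatically nilpotent, so $\pip(\Gr_\lambdav\cap\Grom)\subset\cN$; and (ii) Lemma~\ref{lem:minnotsmall} exhibits, by an explicit $SL_2$ computation pushed into $G$ along the cocharacter $\alphav_0$, a point of $\Gr_{2\alphav_0}\cap\Grom$ whose image under $\pip$ is a nonzero semisimple element of $\fg$. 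Together with the containment $\Gr_{2\alphav_0}\subset\overline{\Gr_\lambdav}$ whenever $\lambdav\ge 2\alphav_0$, this immediately forces infinitely many $G$-orbits on $\Gr_\lambdav\cap\Grom$ for non-small $\lambdav$. This sidesteps entirely the delicate fibre-bundle and degeneration arguments your proposal would require; the only non-formal input is a one-line matrix identity in $SL_2(\cK)$.
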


We can use Theorem \ref{thm:mn} to define the desired functor $\Psi:\Perv_{\GO}(\Grsm)\to\Perv_G(\cN)$. Namely, let $\Psi=\pi_*j^*$ where $\pi:\cM\to\cN$ is the map from Theorem \ref{thm:mn} and $j:\cM\hookrightarrow\Grsm$ is the inclusion. See Remark~\ref{rmk:slice} for some motivation.

\begin{thm} \label{thm:sss}
If $G$ is not of type $G_2$, then
\begin{equation*}
\vcenter{\xymatrix@C=40pt{
\Rep(\Gv)_{\sm} \ar[r]^-{\Sat}_-\sim \ar[d]_{\Phi}
 & \Perv_{\GO}(\Grsm) \ar[d]^{\Psi} \\
\Rep(W) \ar[r]_{\Spr} & \Perv_G(\cN)}}
\end{equation*}
is a commuting diagram of functors. Thus if $V \in \Rep(\Gv)$ is a small irreducible representation, we have an isomorphism of perverse sheaves
\begin{equation}\label{eqn:sssobj}
\pi_*(\Sat(V)|_{\cM}) \cong \Spr(V^{\Tv} \otimes \epsilon).
\end{equation}
\end{thm}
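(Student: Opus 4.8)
The plan is to reduce the commutativity of the square to a statement about objects and then to a multiplicity computation. First I would observe that $\Rep(\Gv)_{\sm}$ is a semisimple $\C$-linear category and that both composites in the square are $\C$-linear additive functors into $\Perv_G(\cN)$: $\Sat$ is an equivalence, $\Phi$ is exact because taking the zero weight space is exact, $\Spr$ is exact because $\Rep(W)$ is semisimple, and $\Psi = \pi_* j^*$ is exact because $j$ is an open immersion (so $j^*$ is $t$-exact for the perverse $t$-structure) and $\pi$ is finite (so $\pi_*$ is $t$-exact). For any two $\C$-linear additive functors out of a semisimple $\C$-linear category, a choice of isomorphism on each isomorphism class of simple objects extends to a natural isomorphism; hence it suffices to produce, for every small irreducible $V$ with highest weight $\lambdav$, an isomorphism as in \eqref{eqn:sssobj}, where $\Sat(V) = \IC_\lambdav$ is the IC sheaf of $\overline{\Gr_\lambdav}$.

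Next I would check that both sides of \eqref{eqn:sssobj} are semisimple $G$-equivariant perverse sheaves on $\cN$ supported on $\cNsm$. For the left-hand side, $\IC_\lambdav|_\cM$ is the restriction of a simple perverse sheaf to the $G$-stable open subset $\cM$, hence a direct sum of IC sheaves on $\cM$; since $\pi$ is finite, the decomposition theorem shows that $\pi_*(\IC_\lambdav|_\cM)$ is semisimple, and it is supported on $\cNsm$ because $\pi(\cM) = \cNsm$ by Theorem~\ref{thm:mn}. For the right-hand side, decompose $V^{\Tv}\otimes\epsilon = \bigoplus_E m_E E$ into irreducible $W$-representations, so that $\Spr(V^{\Tv}\otimes\epsilon) = \bigoplus_E m_E \Spr(E)$ is a direct sum of sheaves $\IC(\overline\cO,\cL)$ with $(\cO,\cL)$ in the Springer correspondence. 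It then remains to match, for each nilpotent orbit $\cO$ and each irreducible $G$-equivariant local system $\cL$ on $\cO$, the multiplicity of $\IC(\overline\cO,\cL)$ on the two sides — in particular, to see that only Springer-type pairs $(\cO,\cL)$ occur on the left.

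To compute the left-hand multiplicities I would work transversally at a generic point $x\in\cO\subseteq\cNsm$: the multiplicity of $\IC(\overline\cO,\cL)$ in $\pi_*(\IC_\lambdav|_\cM)$ is read off from the stalk of this sheaf at $x$ together with the action of the component group $A(x)$ on the fibre $\pi^{-1}(x)$, and the latter is made explicit by the $\Z/2\Z$-quotient description of $\pi$ from Theorem~\ref{thm:mn}. The stalks of $\IC_\lambdav$ along $\cM$ are controlled by Mirković--Vilonen theory \cite{mirkovicvilonen}: along the $\GO$-orbit $\Gr_\muv$ they are governed by the weight multiplicities $\dim V_\muv$ and the affine Kazhdan--Lusztig polynomials, and the intersection with the opposite cell $\Grom$ is compatible with the semi-infinite-orbit geometry underlying the weight-functor description of $\Sat$. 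Comparing this with the stalk formulas for Springer sheaves reduces the required equality of multiplicities to the combinatorial descriptions of $V^{\Tv}$ as a $W$-module due to Broer and Reeder \cite{broer,reeder1,reeder2,reeder3}, which are of exactly the needed shape; it is at this comparison — equivalently, in recognizing $\pi$ or its $\Z/2\Z$-cover as a piece of the Springer resolution — that the hypothesis $G \neq G_2$ enters, the small representations of $G_2$ being inspected by hand.

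I expect the third step to be the main obstacle: the two perverse sheaves in \eqref{eqn:sssobj} are built from entirely different geometry (the affine Grassmannian versus the nilpotent cone), so the real work is to bridge them — controlling the stalks of $\IC_\lambdav$ restricted to the opposite cell via the semi-infinite orbits, and then identifying the resulting $G$-equivariant sheaf on $\cN$ with a genuine Springer sheaf, uniformly in $G$ rather than type by type. Once \eqref{eqn:sssobj} is established for all small irreducibles, the commuting square of functors follows formally from the first step; see also Remark~\ref{rmk:slice} for the slice-theoretic heuristic behind the definition $\Psi = \pi_* j^*$, which is what makes the stalk comparison plausible.
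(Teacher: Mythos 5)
Your first step (reduce to an isomorphism on simple objects by semisimplicity and exactness) is exactly the reduction made in the paper, and your observation that both $j^*$ and $\pi_*$ are $t$-exact is the key structural fact the paper uses. But after that the proposal stops being a proof: the object-level isomorphism \eqref{eqn:sssobj} is the entire content of the theorem, and you describe a program for attacking it (stalks via affine KL polynomials / semi-infinite orbits, compared with Springer stalk formulas) without carrying any of it out. You even flag step~3 as ``the main obstacle'' --- which means the proposal, as written, doesn't contain an argument for the statement.

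The paper's actual route to \eqref{eqn:sssobj} is more structural and bypasses most of the stalk analysis you envision. It isolates Proposition~\ref{prop:general}, which records, orbit by orbit, exactly what $\pi|_{\cM_\lambdav}$ does: either it is an isomorphism onto a single nilpotent orbit $C$, or it covers two orbits $C_1 \supset C_2$ with $\pi^{-1}(C_1)\to C_1$ a connected $2$-fold \'etale cover and $\pi^{-1}(C_2)\to C_2$ an isomorphism. Because $\pi$ is finite and $j$ is open, this data alone forces $\pi_*j^*\IC(\overline{\Gr_\lambdav})$ to be either $\IC(\overline C)$ or $\IC(\overline{C_1})\oplus\IC(\overline{C_1},\sigma)$; no stalk computation on $\Gr$ is needed at this stage. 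What remains is to match this with Reeder's explicit decomposition of $V^\Tv\otimes\epsilon$ under $\Spr^{-1}$, and the paper is explicit that this matching is ``largely empirical.'' (Stalk/weight-multiplicity calculations do appear in the paper, but in the proof of Proposition~\ref{prop:general} in type $E$, as a tool for pinning down the orbit structure of $\pi$ --- not to compute multiplicities of Springer sheaves directly as you propose.)

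Two more specific problems. First, in step~2 you write that you must ``see that only Springer-type pairs $(\cO,\cL)$ occur on the left,'' but you never address why this should hold, and in fact it \emph{fails} in type $G_2$: over the subregular orbit $G_2(a_1)$, $\pi$ is a nontrivial double cover, so $\pi_*j^*\IC(\overline{\Gr_{\omegav_1}})$ contains $\IC(\overline{G_2(a_1)},\sigma)$, which does not appear in the Springer correspondence. This is precisely why the theorem excludes $G_2$. Second, your explanation of the $G_2$ hypothesis (``recognizing $\pi$ as a piece of the Springer resolution \ldots\ small representations of $G_2$ being inspected by hand'') is off the mark: the small representations of $G_2$ are not handled by hand --- $G_2$ is genuinely excluded from the statement as given, and the obstruction is the one above; the case-free version in Remark~\ref{rmk:sss-g2} fixes it by composing with the projection $(\cdot)_\bSpr$ onto the Springer subcategory. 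A correct proof along your lines would need, in place of your step~3, something playing the role of Proposition~\ref{prop:general}; without it the argument does not close.
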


A uniform statement, including type $G_2$, can be obtained by slightly modifying the definition of $\Psi$; see Remark~\ref{rmk:sss-g2}. Our proof of Theorem~\ref{thm:sss} is largely empirical: the right-hand side of \eqref{eqn:sssobj} was computed by Reeder in essentially every case, and we show that the left-hand side gives the same result. 

We begin in Section~\ref{sect:not} by fixing notation, defining the map $\pi$, and proving a number of lemmas.  In Section~\ref{sect:reduce}, we state a result (Proposition~\ref{prop:general}) describing the possible behaviour of $\pi$ with respect to $G$-orbits in $\cM$ and $\cN$, and we explain how to deduce Theorems~\ref{thm:mn}--\ref{thm:sss} from this result.  Proposition~\ref{prop:general} is proved by case-by-case considerations in the classical types in Section~\ref{sect:classical}, and in the exceptional types in Section~\ref{sect:exceptional}.  Finally, Section~\ref{sect:conseq} describes some consequences of these results.  In addition to the aforementioned conjecture of Lusztig on special pieces, we explain the connection to Reeder's results on small representations~\cite{reeder1, reeder2, reeder3}, and we describe a new geometric approach to Broer's restriction theorem for covariants of small representations~\cite{broer}.

\subsubsection*{Added in revision}

After this paper appeared in preprint form, a more conceptual proof of Theorem~\ref{thm:sss} (or rather of a slightly diffferent statement, equivalent to that in Remark~\ref{rmk:sss-g2}) was found by the authors together with S.~Riche.  This proof, which is given in the sequel paper~\cite{ahr}, avoids case-by-case considerations, and applies to representations and perverse sheaves with coefficients in any Noetherian ring of finite global dimension. Note that~\cite{ahr} still uses the complex geometric set-up of the present paper, and relies on Theorem~\ref{thm:mn} above.

\subsection*{Acknowledgments}

This paper developed from discussions with V.~Ginzburg and S.~Riche, to whom the authors are much indebted.  In particular, V.~Ginzburg posed the problem of finding a geometric interpretation of Broer's covariant theorem in the context of geometric Satake.  Much of the work was carried out during a visit by P.A. to the University of Sydney in May--June 2011, supported by ARC Grant No.~DP0985184.  P.A. also received support from NSF Grant No.~DMS-1001594.

\section{Notation and preliminaries}
\label{sect:not}

The notation and conventions of Section~\ref{sect:intro} for the groups $G$, $\Gv$, $T$, $\Tv$, and $W$ remain in force throughout the paper.  For any (ind-)variety $X$ over $\C$, acted on by a (pro-)algebraic group $H$, we write $\Perv_H(X)$ for the abelian category of $H$-equivariant perverse $\C$-sheaves on $X$.  The simple perverse sheaf associated to an irreducible $H$-equivariant local system $E$ on an $H$-orbit $C \subset X$ is denoted $\IC(\overline{C},E)$, or $\IC(\overline{C})$ if $E$ is trivial.

Let $\Lambdav=\Hom(\C^\times,T)$ denote the coweight lattice of $G$, which we identify with the weight lattice of $\Gv$. Since $G$ is simply-connected, $\Lambdav$ equals the coroot lattice of $G$, i.e., the root lattice of $\Gv$. Fix a positive system, and let $\Lambdav^+ \subset \Lambdav$ be the set of dominant coweights. We have the usual partial order on $\Lambdav^+$, where $\lambdav\geq\muv$ if and only if $\lambdav-\muv$ is an $\N$-linear combination of positive roots of $\Gv$. For $\lambdav \in \Lambdav^+$, let $V_\lambdav$ denote the irreducible representation of $\Gv$ of highest weight $\lambdav$. Let $w_0$ denote the longest element of $W$, and recall that the dual representation of $V_\lambdav$ is $V_{-w_0\lambdav}$.  

Let $\Lambdavsm^+ \subset \Lambdav^+$ denote the set of \emph{small} coweights, i.e., those $\lambdav$ such that $V_\lambdav$ is a small representation of $\Gv$. Equivalently, $\lambdav\in\Lambdavsm^+$ if and only if $\lambdav\not\geq 2\alphav_0$, where $\alphav_0$ denotes the highest short root of $\Gv$. Thus, $\Lambdavsm^+$ is a lower order ideal of $\Lambdav^+$.

Next, let $\cK = \C((t))$ and $\cO = \C[[t]]$.  Any coweight $\lambdav \in \Lambdav$ gives rise to a point of $T(\cK)$ (or $G(\cK)$), denoted $\cw_\lambdav$.  Recall that the \emph{affine Grassmannian} of $G$ is the ind-variety $\Gr = G(\cK)/\GO$.  Let $\bo$ denote the base point of $\Gr$, i.e., the image of the identity element of $G(\cK)$ in $\Gr$.  For $\lambdav \in \Lambdav^+$, let $\Gr_\lambdav$ be the image in $\Gr$ of the double coset $\GO\cw_\lambdav\GO$.  This is a $\GO$-orbit in $\Gr$.  It is well known that the $\Gr_\lambdav$ are all distinct, and that every $\GO$-orbit in $\Gr$ arises in this way. The partial order on $\Lambdav^+$ corresponds to the closure order on the $\GO$-orbits, and $\Sat(V_\lambdav)=\IC(\overline{\Gr_\lambdav})$. Let $\Grsm$ be the union  of the orbits $\Gr_\lambdav$ for $\lambdav \in \Lambdavsm^+$, a closed subvariety of $\Gr$.

Let $\cOm = \C[t^{-1}] \subset \cK$, and consider the group $\GOm$. Denote the $\GOm$-orbit of $\bo$ by $\Grom$. (This is the `opposite Bruhat cell' referred to in Section~\ref{sect:intro}.)  It is known that $\Gr_\lambdav\cap\Grom$ is open dense in $\Gr_\lambdav$ for all $\lambdav$. Let
\[
\cM=\Grsm\cap\Grom.
\]
This is a $G$-stable affine open dense subvariety of $\Grsm$.  We also put
\[
\cM_\lambdav = \Gr_\lambdav \cap \Grom = \Gr_\lambdav \cap \cM
\qquad\text{for $\lambdav \in \Lambdavsm^+$.}
\]

\begin{rmk} \label{rmk:slice}
As part of Lusztig's theory of $q$-analogues of weight multiplicities~\cite{lusztig:scf}, one knows that for any $\lambdav\in\Lambdav^+$, the cohomology of the stalk of $\Sat(V_\lambdav)$ at $\bo$ is a graded vector space of total dimension equal to $\dim V_\lambdav^{\Tv}$. Since $\Gr_\lambdav\cap\Grom$ is a conical affine variety with vertex at $\bo$, the cohomology of the stalk of $\Sat(V_\lambdav)$ at $\bo$ can be identified with the hypercohomology of $\Sat(V_\lambdav)|_{\Gr_\lambdav\cap\Grom}$, as observed in~\cite[\S1.2]{bravfink}. So one could say loosely that the geometric analogue of taking the zero weight space is restricting to $\Grom$. To give a geometric interpretation of the $W$-action on the zero weight space, it is natural to want to push forward perverse sheaves on $\Grom$ to perverse sheaves on the nilpotent cone $\cN$, the home of Springer theory. As we will show, this idea works, but only for the small part $\cM$ of $\Grom$.  
\end{rmk}

Let $\fG \subset \GOm$ be the kernel of the natural map $\GOm \to G$ given by $t^{-1} \mapsto 0$.  It is easily seen that $\GOm \cong G \ltimes \fG$.  Since the stabilizer in $\GOm$ of $\bo \in \Gr$ is $G$, the action of $\GOm$ on $\Grom$ induces a $G$-equivariant isomorphism of ind-varieties
\begin{equation}\label{eqn:grom-fg-isom}
\Grom \cong \fG.
\end{equation}
Note that the natural map $\GOm \to G$ factors through $G(\C[t^{-1}]/(t^{-2}))$.  Since there is a natural identification of the Lie algebra $\fg$ of $G$ with the kernel of the map $G(\C[t^{-1}]/(t^{-2})) \to G$, we obtain a canonical homomorphism
\begin{equation}\label{eqn:fg-fg-homom}
\fG \to \fg.
\end{equation}
The $G$-equivariant morphism obtained by composing~\eqref{eqn:grom-fg-isom} and~\eqref{eqn:fg-fg-homom} is denoted
\[
\pip: \Grom \to \fg,
\]
and its restriction to $\cM$ is denoted
\[
\pi = \pip|_{\cM}: \cM \to \fg.
\]

Next, let $\theta: G(\cK) \to G(\cK)$ be the automorphism induced by the automorphism $t \mapsto -t$ of the coefficient field $\cK$.  Let $\iota: G(\cK) \to G(\cK)$ be the involutive antiautomorphism given by $\iota(g) = \theta(g^{-1})$.  The group $\fG$ is preserved by $\iota$.  Via~\eqref{eqn:grom-fg-isom}, this map induces an involution of $\Grom$, which is also denoted
\begin{equation}\label{eqn:iota-grom-defn}
\iota: \Grom \to \Grom.
\end{equation}
This map does not, in general, extend to an involution of $\Gr$.  (The map $\theta$ does induce an involution of $\Gr$, but $g \mapsto g^{-1}$ does not induce a map on $\Gr$.)  The following lemma says that $\iota$ respects the stratification of $\Grom$ induced by $\GO$-orbits on $\Gr$.  

\begin{lem} \label{lem:iota-dual}
If $x \in \Gr_\lambdav \cap \Grom$, then $\iota(x) \in \Gr_{-w_0\lambdav} \cap \Grom$.
\end{lem}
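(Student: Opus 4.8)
The plan is to reduce the statement to a computation with the matrix-coefficient description of $\GO$-orbits, using the involution $\iota(g) = \theta(g^{-1})$ and the well-known characterization of which double coset $\GO g \GO$ an element lies in.

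First I would recall that for $x = g\bo \in \Gr$, the orbit $\Gr_\muv$ containing $x$ is detected by the elementary divisors of $g$: writing $g \in G(\cK) \subset GL_N(\cK)$ via some representation (or working with $G$ directly), $x \in \Gr_\muv$ iff $\GO g \GO = \GO \cw_\muv \GO$, and this is characterized by the Cartan decomposition $G(\cK) = \coprod_{\muv \in \Lambdav^+} \GO \cw_\muv \GO$. So I want to understand, given $\GO g \GO = \GO \cw_\lambdav \GO$, what double coset $\theta(g^{-1})$ lies in. The key observations are: (i) $\theta$ preserves $\GO$ and sends $\cw_\muv$ to itself up to sign, hence $\theta(\GO \cw_\muv \GO) = \GO \cw_\muv \GO$ for every $\muv$ — so $\theta$ fixes each Cartan double coset; (ii) the antiautomorphism $g \mapsto g^{-1}$ sends $\GO \cw_\muv \GO$ to $\GO \cw_\muv^{-1} \GO = \GO \cw_{-\muv} \GO$, and since $-\muv$ is conjugate under $W$ to $-w_0\muv \in \Lambdav^+$, we have $\GO \cw_{-\muv} \GO = \GO \cw_{-w_0\muv} \GO$. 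Composing, $\iota = \theta \circ (\ )^{-1}$ sends $\GO \cw_\lambdav \GO$ to $\GO \cw_{-w_0\lambdav} \GO$, which is exactly the assertion that $\iota(x) \in \Gr_{-w_0\lambdav}$. That $\iota(x) \in \Grom$ as well is immediate: $\fG$ is preserved by $\iota$, and under the isomorphism \eqref{eqn:grom-fg-isom} the map \eqref{eqn:iota-grom-defn} is by construction an involution of $\Grom$, so it carries $\Grom$ to itself.

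The main technical point to pin down carefully is step (i)–(ii), i.e., that the Cartan double coset of an element is invariant under $\theta$ and is sent to its "dual" coset under inversion. For $\theta$: since $t \mapsto -t$ preserves $\cO$, it preserves $\GO$; and $\theta(\cw_\muv) = \cw_\muv$ because $\cw_\muv = \muv(t)$ and $\muv(-t) = \muv(-1)\muv(t)$ with $\muv(-1) \in T(\C) \subset G(\cO)$, so $\theta(\cw_\muv) \in \GO \cw_\muv \GO$. Hence $\theta$ fixes each double coset. For inversion: $(\GO \cw_\muv \GO)^{-1} = \GO \cw_\muv^{-1} \GO$, and $\cw_\muv^{-1} = \cw_{-\muv} = (-\muv)(t)$; since every coweight is $W$-conjugate to a unique dominant one, and $W$ normalizes $\GO$ (representatives of $W$ lie in $G(\C) \cap N(T) \subset \GO$), $\GO \cw_{-\muv} \GO = \GO \cw_{\muv'} \GO$ where $\muv' = -w_0\muv$ is the dominant representative of $W(-\muv)$. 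Combining gives the claim.

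I expect the only real subtlety to be the bookkeeping in identifying $-w_0\muv$ as the dominant conjugate of $-\muv$, which is standard (it is the same fact underlying $V_\lambdav^* \cong V_{-w_0\lambdav}$, already recalled in the text), and checking that all the Weyl-group representatives and the element $\muv(-1)$ genuinely lie in $\GO$ — both of which are routine once one uses that $G$ is split and $\GO \supset G(\C)$. No deep input beyond the Cartan decomposition is needed.
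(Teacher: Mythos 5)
Your proposal is correct and follows essentially the same route as the paper's proof: both write $x$ as $g\cw_\lambdav h$ with $g,h\in\GO$, apply $\iota=\theta\circ(\,\cdot\,)^{-1}$, use that $\theta$ preserves $\GO$-double cosets and that inversion sends $\cw_\lambdav$ to $\cw_{-\lambdav}$, and conclude via the fact that $-w_0\lambdav$ is the dominant $W$-conjugate of $-\lambdav$. The only difference is that you spell out a couple of routine checks (e.g.\ $\muv(-1)\in\GO$, Weyl representatives in $\GO$) that the paper leaves implicit.
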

\begin{proof}
Identifying $\Grom$ with $\fG$, the assumption means that $x \in \fG$ can be written as $g\cw_\lambdav h$, with $g,h \in \GO$.  It follows that $\iota(x) = \iota(h) \iota(\cw_\lambdav) \iota(g)$.  We have $\iota(\cw_\lambdav) = \theta(\cw_{-\lambdav})$.  Since $\theta$ preserves double cosets of $\GO$, we have $\iota(x) \in \GO \cw_{-\lambdav} \GO$.  The result then follows from the observation that $-w_0\lambdav$ is the unique dominant coweight in the $W$-orbit of $-\lambdav$.
\end{proof}

In view of this lemma, we sometimes speak of a $\iota$-stable $\GO$-orbit in $\Gr$, or of two $\GO$-orbits being exchanged by $\iota$, even though $\iota$ does not extend to $\Gr$.  Note that the involution $\lambdav\mapsto -w_0\lambdav$ preserves $\Lambdavsm^+$.  Thus, $\iota$ preserves the set of $\GO$-orbits in $\Grsm$, and it induces an involution
\[
\iota: \cM \to \cM
\]
as well.  The action of $\Z/2\Z$ referred to in Theorem~\ref{thm:mn} is the one in which the nontrivial element acts by $\iota$.

\begin{rmk}\label{rmk:iota-dual}
It follows from Lemma~\ref{lem:iota-dual} that $\iota^*\Sat(V)|_{\Grom} \cong \Sat(V^*)|_{\Grom}$.
\end{rmk}

\begin{lem}\label{lem:iota-pi}
We have $\pip \circ \iota = \pip: \Grom \to \fg$.
\end{lem}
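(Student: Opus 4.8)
The plan is to verify the identity by unwinding the definitions of $\pip$ and $\iota$ along the composite $\Grom \xrightarrow{\eqref{eqn:grom-fg-isom}} \fG \xrightarrow{\eqref{eqn:fg-fg-homom}} \fg$ that defines $\pip$. Via the isomorphism~\eqref{eqn:grom-fg-isom}, the involution $\iota$ of $\Grom$ is by definition the restriction to $\fG$ of the anti-automorphism $\iota = \theta \circ (g \mapsto g^{-1})$ of $G(\cK)$, so it suffices to prove that the composite $\fG \xrightarrow{\iota} \fG \to \fg$ agrees with~\eqref{eqn:fg-fg-homom} itself.

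First I would note that the coefficient reduction $\C[t^{-1}] \to \C[t^{-1}]/(t^{-2})$ induces a group homomorphism $\GOm \to G(\C[t^{-1}]/(t^{-2}))$, through which the map defining $\fG$ factors, and that both $\theta$ (which preserves $\C[t^{-1}]$, sending $t^{-1}\mapsto -t^{-1}$) and inversion descend along this reduction; hence $\iota$ descends to an anti-automorphism $\bar\iota$ of $G(\C[t^{-1}]/(t^{-2}))$ intertwined with $\iota$ on $\GOm$. Both $\bar\theta$ and inversion preserve the normal subgroup $K=\ker(G(\C[t^{-1}]/(t^{-2}))\to G)$, so $\bar\iota$ preserves $K$, and~\eqref{eqn:fg-fg-homom} is by construction the resulting map $\fG \to K$ composed with the canonical identification $K \cong \fg$. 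Thus the claim reduces to showing that $\bar\iota$ acts as the identity on $K$.

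The key point is this last computation. Writing $\epsilon = t^{-1}$, so that $\C[t^{-1}]/(t^{-2}) = \C[\epsilon]/(\epsilon^2)$ is the ring of dual numbers, $K$ is the kernel of $G(\C[\epsilon]/(\epsilon^2))\to G$, canonically identified with $\fg = T_eG$ via $A \mapsto 1+A\epsilon$; this group is abelian, so the anti-automorphism $\bar\iota$ restricts to an honest homomorphism on it. Under this identification $\bar\theta$ is the scaling $\epsilon\mapsto -\epsilon$, which acts on $\fg = T_eG$ by $-1$, and inversion also acts by $-1$ (the differential of inversion at $e$ is $-\id$; concretely $(1+A\epsilon)^{-1} = 1-A\epsilon$). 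Hence $\bar\iota = \bar\theta\circ(g\mapsto g^{-1})$ acts on $K$ by $(-1)\cdot(-1) = \id$. Combined with the compatibility of the previous paragraph, for $g\in\fG$ the image of $\iota(g)$ under~\eqref{eqn:fg-fg-homom} equals $\bar\iota$ applied to the image of $g$, which equals the image of $g$; transporting this back through~\eqref{eqn:grom-fg-isom} yields $\pip\circ\iota = \pip$.

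I do not expect a genuine obstacle here; the only points requiring care are the bookkeeping—checking that $\iota$, an anti-automorphism not defined on all of $\Gr$, really does descend to $G(\C[t^{-1}]/(t^{-2}))$ and preserve $K$—and the cancellation of the two signs contributed by $\theta$ and by inversion. As a sanity check one may run the argument in a faithful matrix representation: if $g = 1 + A_1t^{-1} + A_2t^{-2} + \cdots \in \fG$ then $g^{-1} = 1 - A_1t^{-1} + \cdots$, so $\iota(g) = \theta(g^{-1}) = 1 + A_1t^{-1} + \cdots$ has the same $t^{-1}$-coefficient $A_1$, which is precisely its image under~\eqref{eqn:fg-fg-homom}.
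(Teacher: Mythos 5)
Your proof is correct and follows essentially the same route as the paper: reduce to the quotient $G(\C[t^{-1}]/(t^{-2}))$, observe that both $\theta$ and inversion descend and induce negation on $\fg \cong \ker(G(\C[\epsilon]/\epsilon^2)\to G)$, and conclude that the two signs cancel. The paper states this more tersely (it leaves the $\theta$-induces-$(-1)$ step implicit, mentioning only that inversion is negation on the additive group $\fg$), so your version is just a more carefully spelled-out rendering of the same argument.
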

\begin{proof}
The maps $\theta$ and $g \mapsto g^{-1}$ both preserve the kernel of the map~\eqref{eqn:fg-fg-homom}, so they induce maps on $\fg$, and their composition $\bar\iota: \fg \to \fg$ has the property that $\bar\iota \circ \pip = \pip \circ \iota$.  But the inverse map on the (additive) algebraic group $\fg$ coincides with the negation map, so $\bar\iota = \id$.
\end{proof}

The varieties and maps defined above make sense for arbitrary reductive groups, not just simply-connected simple ones. Recall that for a non-simply-connected group, the affine Grassmannian $\Gr$ is not connected. In this case, one should impose the extra condition in the definition of $\Lambdavsm^+$ that $\lambdav$ belong to the coroot lattice; then $\Grsm$ lies in the connected component of $\Gr$ containing the base point $\bo$, as does $\Grom$. Indeed, $\Grsm$, $\Grom$, $\cM$, and $\fG$ can be identified with the analogous objects defined for the simply-connected cover of the derived group.

We will sometimes need to compare these constructions for different groups, and when ambiguity is possible, we include the name of the group as a subscript.  This is illustrated in the following obvious lemma, whose proof we omit.

\begin{lem}\label{lem:commute}
Let $H$ and $H'$ be two reductive groups, and let $\phi: H \to H'$ be a group homomorphism.  Then we have a commutative diagram
\[
\vcenter{\xymatrix{
\Gr_{H,0}^- \ar[r]\ar[d]_{\pip_H} & \Gr_{H',0}^- \ar[d]^{\pip_{H'}} \\
\fh \ar[r] & \fh'
}}
\]
where the horizontal maps are induced by $\phi$.
\end{lem}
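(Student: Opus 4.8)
The plan is to observe that every ingredient in the construction of $\pip$ is functorial in the group, so the lemma reduces to unwinding that functoriality. Applying $\phi$ to $\cK$-points yields a group homomorphism $\phi_{\cK}\colon H(\cK)\to H'(\cK)$. Since $\phi_{\cK}$ carries $H(\cOm)$ into $H'(\cOm)$ and sends the base point to the base point, it induces the upper horizontal map $\Gr_{H,0}^-\to\Gr_{H',0}^-$ of the diagram; and since $\phi$ is compatible with the specialization $t^{-1}\mapsto 0$, the map $\phi_{\cK}$ also carries $\fG_H=\ker(H(\cOm)\to H)$ into $\fG_{H'}$. Under the isomorphisms~\eqref{eqn:grom-fg-isom} for $H$ and for $H'$, the resulting map $\fG_H\to\fG_{H'}$ corresponds to the upper horizontal map of the diagram, since both are restrictions of the single map $\phi_{\cK}$.

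Next I would treat the passage to Lie algebras. The factorization $H(\cOm)\to H(\C[t^{-1}]/(t^{-2}))\to H$ is natural in $H$, so $\phi_{\cK}$ induces a map $H(\C[t^{-1}]/(t^{-2}))\to H'(\C[t^{-1}]/(t^{-2}))$ commuting with the projections to $H$ and $H'$; restricting to kernels gives a map $\fh\to\fh'$, and the point to check is that, under the standard identification of the Lie algebra with the kernel of evaluation on the dual numbers $\C[\epsilon]/(\epsilon^2)$ (here $\epsilon=t^{-1}$), this map is precisely the differential $d\phi$. This is the only step that is not a pure formality, and even it is a routine instance of the naturality of the dual-numbers description of Lie algebras. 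Granting it, the map $\fG_H\to\fG_{H'}$ above is compatible with the maps $\fG_H\to\fh$ and $\fG_{H'}\to\fh'$ of~\eqref{eqn:fg-fg-homom} for the two groups, i.e.\ the differential $d\phi$ is the lower horizontal map.

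Combining the two squares, the composite $\pip_H$, namely $\Gr_{H,0}^-\simto\fG_H\to\fh$, is carried to $\pip_{H'}$, namely $\Gr_{H',0}^-\simto\fG_{H'}\to\fh'$, compatibly with the horizontal arrows, which is exactly the commutativity asserted by the lemma. The hard part, such as it is, is concentrated entirely in identifying the induced map on kernels with $d\phi$; everything else follows formally from the functoriality of $R\mapsto H(R)$ applied with $R\in\{\cK,\cOm,\C[t^{-1}]/(t^{-2})\}$.
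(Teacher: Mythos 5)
Your proof is correct. The paper in fact omits the proof of this lemma entirely, declaring it ``obvious''; your argument, tracking the functoriality of $R\mapsto H(R)$ for $R\in\{\cK,\cOm,\C[t^{-1}]/(t^{-2})\}$ through each step of the construction of $\pip$ and identifying the induced map on kernels with $d\phi$ via the dual-numbers description of the Lie algebra, is precisely the reasoning the authors had in mind.
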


Note that the inclusion of a reductive subgroup $H\hookrightarrow H'$ induces a closed embedding $\Gr_H\hookrightarrow\Gr_{H'}$.

We conclude this section with a useful algebro-geometric lemma.

\begin{lem}\label{lem:finite-map}
Let $f: Y \to X$ be a dominant morphism of affine varieties.  Let $U \subset X$ be a dense open subset whose complement has codimension at least $2$, and such that $f^{-1}(U)$ is dense in $Y$ and $f|_{f^{-1}(U)}: f^{-1}(U) \to U$ is finite.  Then $f$ is finite.
\end{lem}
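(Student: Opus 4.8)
The plan is to reduce the statement to the purely algebraic assertion that the coordinate ring $\mathcal{O}(Y)$ is a finite module over $\mathcal{O}(X)$, which is equivalent to finiteness of $f$ since $X$ and $Y$ are affine. As $\mathcal{O}(Y)$ is a finitely generated $\mathcal{O}(X)$-algebra, it suffices to show that every $b \in \mathcal{O}(Y)$ is integral over $\mathcal{O}(X)$. First note that $f^{-1}(U) \to U$, being finite with dense image, is surjective, so $\dim Y = \dim X$ and the field extension $\mathrm{Frac}(\mathcal{O}(Y))/K$ is finite, where $K := \mathrm{Frac}(\mathcal{O}(X))$; thus $b$ has a minimal polynomial $p_b \in K[t]$. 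I would try to show that $p_b$ in fact has coefficients in $\mathcal{O}(X)$: over an affine open $U_i \subseteq U$ on which $f$ is finite, $b$ restricts to an element of $\mathcal{O}(f^{-1}(U_i))$ integral over $\mathcal{O}(U_i)$, so if $\mathcal{O}(U_i)$ is integrally closed in $K$ then $p_b \in \mathcal{O}(U_i)[t]$; letting $U_i$ range over a cover of $U$, the (intrinsic) coefficients of $p_b$ then lie in $\bigcap_i \mathcal{O}(U_i) = \Gamma(U, \mathcal{O}_X)$, and $\Gamma(U, \mathcal{O}_X) = \mathcal{O}(X)$ by the algebraic Hartogs principle, using that $X \setminus U$ has codimension at least $2$. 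Both the integral-closedness of $\mathcal{O}(U_i)$ and the Hartogs step require $X$ to be normal, so the main preliminary task is to reduce to that case.

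For the reduction, take the normalization $n : \tilde X \to X$ --- a finite surjection with $\tilde X$ a normal affine variety --- and the normalization $\nu : Y' \to Y$. Since $Y'$ is normal and $Y' \to Y \to X$ is dominant, the universal property of normalization factors it as a morphism $g : Y' \to \tilde X$ followed by $n$. Setting $\tilde U := n^{-1}(U)$, I claim $(g, \tilde X, \tilde U)$ satisfies the hypotheses of the lemma: the complement $\tilde X \setminus \tilde U = n^{-1}(X \setminus U)$ has codimension at least $2$ in $\tilde X$ because $n$ is finite and surjective; $g^{-1}(\tilde U) = \nu^{-1}(f^{-1}(U))$ is dense in $Y'$ because $\nu$ is finite and surjective; and $g|_{g^{-1}(\tilde U)}$ is finite by the cancellation property for finite morphisms, since composing it with the finite map $\tilde U \to U$ yields the composite of $\nu$ with the finite map $f^{-1}(U) \to U$. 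Granting the lemma over a normal base, $g$ is then finite, so $\mathcal{O}(Y')$ is finite over $\mathcal{O}(\tilde X)$, hence over $\mathcal{O}(X)$; as $\mathcal{O}(X)$ is Noetherian and $\mathcal{O}(Y) \hookrightarrow \mathcal{O}(Y')$, the module $\mathcal{O}(Y)$ is finite over $\mathcal{O}(X)$.

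With $X$ normal, the argument from the first paragraph goes through verbatim: for any $b \in \mathcal{O}(Y)$ the minimal polynomial $p_b \in K[t]$ lies in $\mathcal{O}(U_i)[t]$ for every affine open $U_i \subseteq U$ on which $f$ is finite (because $\mathcal{O}(U_i)$ is integrally closed, $X$ being normal), hence $p_b \in \Gamma(U, \mathcal{O}_X)[t] = \mathcal{O}(X)[t]$, so $b$ is integral over $\mathcal{O}(X)$. I expect the main obstacle to be the reduction step rather than the normal case: one must check that both the codimension hypothesis and the finiteness-over-$U$ hypothesis are inherited by $g : Y' \to \tilde X$, which hinges on a few standard but easily-botched facts about finite morphisms (that they do not raise fibre dimension, and that $g_1$ is finite whenever $g_2 \circ g_1$ and $g_2$ are finite). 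Once one is over a normal base, the remainder is the classical statement that the minimal polynomial over $K$ of an element integral over an integrally closed domain with fraction field $K$ has coefficients in that domain, together with algebraic Hartogs.
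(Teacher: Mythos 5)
Your proof is correct, but it takes a genuinely different route from the paper's. The paper forms the affine schemes $\hat X = \Spec \C[U]$ and $\hat Y = \Spec \C[f^{-1}(U)]$, observes that $\hat Y \to \hat X$ is finite (being induced by $f|_{f^{-1}(U)}$) and that $\hat X \to X$ is finite by citing an SGA2 result on extending coherent sheaves across codimension-$2$ closed subsets; the conclusion then falls out of a short diagram chase using that $\hat Y \to Y$ is dominant. Your argument instead reduces to the case of a normal base via a finite-morphism cancellation step, and then shows directly that the minimal polynomial of each $b \in \mathcal{O}(Y)$ over $K = \mathrm{Frac}\,\mathcal{O}(X)$ has coefficients in $\bigcap_i \mathcal{O}(U_i) = \Gamma(U,\mathcal{O}_X) = \mathcal{O}(X)$, the last equality being algebraic Hartogs. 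The two proofs are morally the same --- both ultimately rest on the codimension-$2$ Hartogs phenomenon --- but yours unpacks the SGA2 black box into elementary commutative algebra (integral closedness plus normalization), which makes it more self-contained, at the modest cost of the normalization detour.

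One caveat worth flagging: your minimal-polynomial step treats $K$ and $\mathrm{Frac}\,\mathcal{O}(Y)$ as fields, which tacitly assumes $X$ and $Y$ (or at least $Y'$) are irreducible. In the paper's application $X = \cNsm$ is irreducible, but $Y = \cM$ may have two components. This is easily repaired --- either run the argument on each irreducible component of $Y'$ separately (finiteness is checked componentwise), or replace the minimal polynomial of $b$ by the characteristic polynomial of multiplication by $b$ on the finite $K$-algebra $\mathcal{O}(Y') \otimes_{\mathcal{O}(\tilde X)} K$ --- but as written the argument does not quite cover the case actually needed downstream. The reduction-to-normal step itself is sound: you correctly identify the two facts needed (finite surjections preserve codimension, and $g_1$ is finite whenever $g_2 \circ g_1$ and $g_2$ are finite), and both are standard.
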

\begin{proof}
Let $\hat X = \Spec \C[U]$, and let $\hat Y = \Spec \C[f^{-1}(U)]$.  (Here, $\C[Z]$ denotes the ring of regular functions on $Z$.)  We have a commutative diagram
\[
\xymatrix{
\hat Y \ar[r]^{\hat f} \ar[d]_{\mu} \ar[dr]^{q} &
   \hat X \ar[d]^{\nu} \\
Y \ar[r]_{f} & X}
\]
The assumption that $f^{-1}(U)$ is dense in $Y$ means that $\mu$ is dominant.  The map $\hat f$ is induced by $f|_{f^{-1}(U)}: f^{-1}(U) \to U$, so it is finite.  Because the complement of $U$ in $X$ has codimension at least~$2$, it follows from \cite[Exp.~VIII, Proposition~3.2]{sga2} that $\nu: \hat X \to X$ is finite.  Therefore, $q = \nu \circ \hat f$ is finite.  Since $q = f \circ \mu$ and $\mu$ is dominant, $f$ is finite also.
\end{proof}

\section{Reduction to orbit calculations}
\label{sect:reduce}

In the following two sections, we will make a careful study of the relationship between $\GO$-orbits in $\Grsm$ and $G$-orbits in the nilpotent cone $\cN$ arising from $\pi$.  This relationship involves the following notion.

\begin{defn}\label{defn:reeder}
A \emph{Reeder piece} is a subset of $\cN$ of the form $\pi(\cM_\lambdav)$ for some $\lambdav \in \Lambdavsm^+$.
\end{defn}

Here, the fact that $\pi(\cM_\lambdav)\subset\cN$ is part of the following proposition, which we will prove by case-by-case considerations in Sections~\ref{sect:classical} and~\ref{sect:exceptional}.

\begin{prop}\label{prop:general}
The variety $\cM$ is either irreducible or has two irreducible components that are exchanged by $\iota$.
The image $\cNsm=\pi(\cM)$ is an irreducible closed subset of $\cN$, and is the disjoint union of the Reeder pieces, with $\pi$ inducing a bijection
\begin{equation}\label{eqn:gen-bij}
\{ \text{$\GO$-orbits in $\Grsm$} \}/\langle \iota\rangle \overset{\sim}{\longleftrightarrow} \{\text{Reeder pieces}\}.
\end{equation}
For a Reeder piece $S$, let $\Gr_\lambdav$ and $\Gr_{-w_0\lambdav}$, which may coincide, be the corresponding $\GO$-orbits.  Then one of the following holds:
\begin{enumerate}
\item $S$ consists of a single nilpotent orbit $C$, and $\pi$ induces an isomorphism of $C$ with each of $\cM_\lambdav$ and $\cM_{-w_0\lambdav}$.  In this case, we have \label{it:gen-single}
\begin{equation}\label{eqn:zspring-single}
V_\lambdav^\Tv\otimes\epsilon \cong \Spr^{-1}(\IC(\overline{C})).
\end{equation}
\item $S$ consists of two nilpotent orbits $C_1$ and $C_2$, with $C_2 \subset \overline{C_1}$.  Then $\lambdav = -w_0\lambdav$, and $\pi$ induces an isomorphism of $C_2$ with the $\Z/2\Z$-fixed point subvariety $\cM_\lambdav^\iota$ in $\cM_\lambdav$.  On the other hand, the $\Z/2\Z$-action on $\pi^{-1}(C_1)$ is free, and the induced map $\pi^{-1}(C_1) \to C_1$ is a $2$-fold \'etale cover.  In this case, we have \label{it:gen-double}
\begin{equation}\label{eqn:zspring-double}
V_\lambdav^\Tv\otimes\epsilon \cong \Spr^{-1}(\IC(\overline{C_1})) \oplus \Spr^{-1}(\IC(\overline{C_1}, \sigma)),
\end{equation}
where $\sigma$ denotes the unique nontrivial local system of rank~$1$ on $C_1$.
\end{enumerate}
\end{prop}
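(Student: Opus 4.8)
The plan is to prove Proposition~\ref{prop:general} by a direct case-by-case analysis, handling one simple type at a time (types $A$, $B$, $C$, $D$ in Section~\ref{sect:classical}, the exceptional types in Section~\ref{sect:exceptional}). First I would dispose of the parts of the statement that are essentially formal. By Lemma~\ref{lem:iota-dual} the involution $\iota$ permutes the $\GO$-orbits in $\Grsm$, exchanging $\Gr_\lambdav$ with $\Gr_{-w_0\lambdav}$, and by Lemma~\ref{lem:iota-pi} the map $\pi$ is constant on $\iota$-orbits; hence each Reeder piece $\pi(\cM_\lambdav)$ equals $\pi(\cM_{-w_0\lambdav})$, and the arrow in~\eqref{eqn:gen-bij} is at least well defined. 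Since $\cM_\lambdav=\Gr_\lambdav\cap\Grom$ is open and dense in the irreducible variety $\Gr_\lambdav$, and $\cM_\muv\subset\overline{\cM_\lambdav}$ whenever $\muv\le\lambdav$, the irreducible components of $\cM=\bigcup_{\lambdav\in\Lambdavsm^+}\cM_\lambdav$ are precisely the closures of the $\cM_\lambdav$ for $\lambdav$ maximal in $\Lambdavsm^+$ (these are pairwise non-nested because the $\cM_\lambdav$ lie in distinct $\GO$-orbits). Thus the first sentence of the proposition reduces to the purely combinatorial claim that $\Lambdavsm^+$ has either a single maximal element, then necessarily fixed by $\lambdav\mapsto -w_0\lambdav$, or exactly two, exchanged by that involution. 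This I would verify in each type directly from the description $\Lambdavsm^+=\{\lambdav:\lambdav\not\geq 2\alphav_0\}$.

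The substance of the proposition is, for each type and each $\lambdav\in\Lambdavsm^+$, to (i) describe $\cM_\lambdav$ and the restriction of $\pi$ explicitly, (ii) show that $\pi(\cM_\lambdav)\subset\cN$ and identify which nilpotent orbit or orbits make up the Reeder piece $\pi(\cM_\lambdav)$, and (iii) determine the fibres of $\pi$ over those orbits, so as to land in case~(\ref{it:gen-single}) or~(\ref{it:gen-double}). In the classical types I would work in the standard lattice models of $\Gr$: there $\Grom$ becomes an explicit affine space of matrices over $\C$, $\pip$ is the ``leading coefficient'' map, and $\Gr_\lambdav\cap\Grom$ is cut out by rank and nilpotency conditions that can be analyzed by hand, much as in the $GL_n$ picture recalled in Section~\ref{subsect:a}. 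In the exceptional types, where $\Lambdavsm^+$ is small, I would instead lean on numerics: the formula $\dim\Gr_\lambdav=\langle 2\rho,\lambdav\rangle$, the known tables of dimensions and closure relations of nilpotent orbits, the $G$-equivariance of $\pi$, and Lemma~\ref{lem:finite-map} to promote the pointwise orbit data to finiteness of $\pi|_{\cM_\lambdav}$ and to the dichotomy between~(\ref{it:gen-single}) and~(\ref{it:gen-double}). Once (i)--(iii) are in hand for every small $\lambdav$, disjointness of distinct Reeder pieces, the bijection~\eqref{eqn:gen-bij}, and the irreducibility and closedness of $\cNsm=\pi(\cM)$ all read off from the resulting list, irreducibility holding because $\pi\circ\iota=\pi$ forces $\pi(\cM)$ to be the image of a single irreducible component of $\cM$.

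It then remains to obtain the perverse-sheaf formulas~\eqref{eqn:zspring-single} and~\eqref{eqn:zspring-double}. Given the fibre structure from (iii), $\pi|_{\cM_\lambdav}$ is finite and is either an isomorphism onto a single nilpotent orbit $C$, or, in case~(\ref{it:gen-double}), restricts to an isomorphism over the smaller orbit and to a connected $2$-fold \'etale cover over $C_1$. Since $j^*\Sat(V_\lambdav)=\IC(\overline{\cM_\lambdav})$ (IC sheaves restrict to IC sheaves along the open immersion $j$) and $\cM_\lambdav$ is smooth, pushing forward along the finite map $\pi$ turns these covering data into $\IC(\overline{C})$ in the first case and into $\IC(\overline{C_1})\oplus\IC(\overline{C_1},\sigma)$ in the second, the sign local system $\sigma$ being precisely the nontrivial summand of the pushforward of the constant sheaf along the connected double cover. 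Comparing the orbit $C$ (resp.\ the orbits $C_1,C_2$) singled out geometrically with Reeder's case-by-case computation of $V_\lambdav^{\Tv}$ as a $W$-module and with the known Springer correspondence — using $\Phi(V_\lambdav)=V_\lambdav^{\Tv}\otimes\epsilon$ — then yields~\eqref{eqn:zspring-single} and~\eqref{eqn:zspring-double}. I expect step~(ii) in the exceptional types to be the main obstacle: there is unlikely to be a uniform argument showing that the leading-coefficient map carries $\cM_\lambdav$ into $\cN$ and pinning down exactly the one or two nilpotent orbits that occur, so this will require exploiting the explicit structure of the small representations of each exceptional group.
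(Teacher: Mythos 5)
Your overall plan — a type-by-type case analysis, with explicit lattice/matrix models in classical types and tables plus dimension counts in exceptional types, then the finite-map/IC-pushforward dichotomy and a comparison against Reeder's tables for the Springer identifications — is indeed the strategy the paper uses, and your formal observations (the $\iota$-symmetry of $\GO$-orbits from Lemma~\ref{lem:iota-dual}, $\pi\circ\iota=\pi$ from Lemma~\ref{lem:iota-pi}, irreducible components of $\cM$ coming from maximal elements of $\Lambdavsm^+$) match the paper's. The classical-type analysis via the leading-coefficient map and rank conditions is likewise the paper's method.

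The gap you already flag — pinning down $\pi(\cM_\lambdav)$ and the fibres in type $E$ — is genuine, and ``lean on numerics'' as you describe it would not get you there. Two specific difficulties. First, absent a matrix model it is not at all clear that $\pi(\cM_\lambdav)\subset\cN$ when $G$ is exceptional: the proof of Theorem~\ref{thm:small} (which would give this) itself relies on Proposition~\ref{prop:general}, so you cannot invoke it. Second, even granting $\pi(\cM)\subset\cN$, dimension counts and closure orders of nilpotent orbits do not by themselves determine which orbits occur in each Reeder piece, nor whether the fibres over an open orbit have one or two points. The paper's resolution in types $E_6$, $E_7$, $E_8$ hinges on a device you have not anticipated: it embeds a maximal-rank classical subgroup $H\subset G$ (of type $A_5$, $D_6$, $D_8$ respectively), uses the already-proved classical case plus Lemma~\ref{lem:commute} to produce explicit nilpotent orbits in $\pi(\cM_\lambdav)$, then deduces $\cNsm=\overline{C}$ and finiteness of $\pi$, and finally proves $\pi^{-1}(x)=\pi_H^{-1}(x)$ by a delicate comparison (Lemmas~\ref{lem:E-stalkdim}--\ref{lem:E-stalkcalc}, Corollary~\ref{cor:E-fibre}) between stalk dimensions of $\pi_*\IC(\overline{D_1})$ computed via the Lusztig--Shoji algorithm on $\cN$ and stalk dimensions of the Satake IC computed via weight multiplicities of $V_\lambdav$. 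Types $F_4$ and $G_2$ are then handled by folding from $E_6$ and $D_4$. Without the subgroup $H$ and the stalk-dimension comparison, ``exploiting the explicit structure of the small representations'' does not obviously yield the fibre data you need; so the exceptional-type part of your plan, as written, does not go through.
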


If the pair $(C_1,\sigma)$ does not occur in the Springer correspondence for $G$, then the term $\Spr^{-1}(\IC(\overline{C_1}, \sigma))$ in the above formula should be understood to be $0$.  This situation only occurs for the subregular nilpotent orbit in type $G_2$; see Remark~\ref{rmk:g2-spr}.  The two possibilities can be summarized in the following diagram:
\[
\xymatrix@C=10pt{
\cM_\lambdav \ar@<.5ex>[rr]^-\iota \ar[dr]_\sim &&
  \cM_{-w_0\lambdav} \ar@<.5ex>[ll]^-\iota\ar[dl]^\sim \\
 & {}\save[]*{S = C}\restore }
\qquad
\xymatrix{
*+{\pi^{-1}(C_1)} \ar@{^{(}->}[r] \ar[d]_\sim 
  \ar@(ur,ul)[]_{\txt{\tiny $\Z/2\Z$-fixed}}&
\cM_\lambdav \ar[d]_{\pi} &
*+{\pi^{-1}(C_2)} \ar@{_{(}->}[l] \ar[d]^{\txt{\tiny 2-fold\\ \tiny \'etale}}
  \ar@(ur,ul)[]_{\txt{\tiny free $\Z/2\Z$-action}} \\
C_1 \ar@{^{(}->}[r]_{\txt{\tiny closed}} & S & 
  C_2 \ar@{_{(}->}[l]^{\txt{\tiny open}}}
\]
Explicit descriptions of the Reeder pieces, and of the bijection \eqref{eqn:gen-bij}, are given in Tables \ref{tab:classcalc}, \ref{tab:classpo}, \ref{tab:excpo} below. See Section~\ref{subsect:special} for the relationship between Reeder pieces and special pieces.

In the remainder of this section, we explain how to deduce the main theorems from Proposition~\ref{prop:general}.

\begin{proof}[Proof of Theorem~\ref{thm:mn}]
By Lemma~\ref{lem:iota-pi}, each fibre of $\pi$ is a union of $\Z/2\Z$-orbits.  But we know from Proposition~\ref{prop:general} that each $\pi^{-1}(x)$ contains just one or two points, and that in the latter case, $\iota$ exchanges the two points.  Thus, each fibre of $\pi$ consists of a single $\Z/2\Z$-orbit.  To see that $\pi$ is finite (not just quasi-finite), note that $\pi$ is finite over the open $G$-orbit $C\subset\cNsm$ by Proposition~\ref{prop:general}.  That proposition also tells us that $\Z/2\Z$ acts transitively on the components of $\cM$.  Since $\pi^{-1}(C)$ is $\iota$-stable, it is dense in $\cM$.  Since every nilpotent orbit has even dimension, the complement of $C$ in $\cNsm$ has codimension $\geq 2$, and then Lemma~\ref{lem:finite-map} implies that $\pi:\cM\to\cNsm$ is finite. 
\end{proof}

Before considering Theorem~\ref{thm:small}, we need the following lemma.

\begin{lem}\label{lem:minnotsmall}
Let $\alphav_0$ denote the highest short root of $\Gv$.  Then $\pip(\Gr_{2\alphav_0} \cap \Grom) \not\subset \cN$.
\end{lem}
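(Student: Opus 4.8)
\emph{Strategy.}
The plan is to reduce to the case $G = SL_2$ and there produce an explicit point of $\Gr_{2\alphav_0}\cap\Grom$ with non-nilpotent image. Since $\alphav_0$ is by definition a coroot of $G$, we may write $\alphav_0 = \alpha^\vee$ for some root $\alpha$ of $G$; let $\phi\colon SL_2\to G$ be the associated homomorphism, normalized so that $\phi\bigl(\begin{smallmatrix}s&0\\0&s^{-1}\end{smallmatrix}\bigr)=\alpha^\vee(s)$, and let $\Omega$ be the $SL_2(\cO)$-orbit of $\bigl(\begin{smallmatrix}t^2&0\\0&t^{-2}\end{smallmatrix}\bigr)\,\bo$ in $\Gr_{SL_2}$. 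Then $\phi\bigl(\begin{smallmatrix}t^2&0\\0&t^{-2}\end{smallmatrix}\bigr)=\cw_{2\alphav_0}$, so the morphism $\Gr^-_{SL_2,0}\to\Grom$ induced by $\phi$ carries $\Omega\cap\Gr^-_{SL_2,0}$ into $\Gr_{2\alphav_0}\cap\Grom$, and by Lemma~\ref{lem:commute} it fits into a commuting square with $\pip_{SL_2}$, $\pip$, and the Lie algebra homomorphism $d\phi\colon\mathfrak{sl}_2\to\fg$. As $\ker d\phi$ is an ideal of the simple Lie algebra $\mathfrak{sl}_2$ and $\phi$ is nontrivial on the torus, $d\phi$ is injective; being the differential of a morphism of algebraic groups, it also preserves the Jordan decomposition. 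Hence $d\phi$ sends any nonzero semisimple element of $\mathfrak{sl}_2$ to a nonzero semisimple, hence non-nilpotent, element of $\fg$. It therefore suffices to exhibit a point $x\in\Omega\cap\Gr^-_{SL_2,0}$ with $\pip_{SL_2}(x)$ a nonzero semisimple element of $\mathfrak{sl}_2$.

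\emph{The explicit point.}
I would take $g = \left(\begin{smallmatrix}1&0\\t^{-1}&1\end{smallmatrix}\right)\left(\begin{smallmatrix}1&t^{-1}\\0&1\end{smallmatrix}\right) = \left(\begin{smallmatrix}1&t^{-1}\\t^{-1}&1+t^{-2}\end{smallmatrix}\right)\in SL_2(\C[t^{-1}])$. Its entries lie in $\C[t^{-1}]$ and it reduces to the identity at $t^{-1}=0$, so $g\in\fG_{SL_2}$, whence $x:=g\,\bo\in\Gr^-_{SL_2,0}$ and $\pip_{SL_2}(x)$ is the coefficient of $t^{-1}$ in $g$, namely $\left(\begin{smallmatrix}0&1\\1&0\end{smallmatrix}\right)$, which is semisimple with eigenvalues $\pm1$. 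To see that $x\in\Omega$, I would invoke the standard elementary-divisor description of relative position over the discrete valuation ring $\cO$: if $(a_1,a_2)$ with $a_1\le a_2$ is the relative position of the lattice $g\cO^2$ with respect to $\cO^2$, then $a_1 = \min_{i,j} v(g_{ij})$ and $a_1 + a_2 = v(\det g)$, where $v$ is the $t$-adic valuation. Here the entries of $g$ have valuations $0,-1,-1,-2$ and $\det g = 1$, so $(a_1,a_2)=(-2,2)$; thus $x$ lies in the $SL_2(\cO)$-orbit of $\bigl(\begin{smallmatrix}t^2&0\\0&t^{-2}\end{smallmatrix}\bigr)\,\bo$, which is $\Omega$. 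Transporting $x$ by $\phi$ then produces a point of $\Gr_{2\alphav_0}\cap\Grom$ whose image under $\pip$ is $d\phi\!\left(\begin{smallmatrix}0&1\\1&0\end{smallmatrix}\right)\notin\cN$, which is the assertion.

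\emph{Expected obstacle.}
The one point that prevents the argument from being completely immediate is that the naive candidate $\cw_{2\alphav_0}\,\bo$ does \emph{not} lie in $\Grom$ (its lattice $t^2\cO\oplus t^{-2}\cO$ cannot be written as $h\,\cO^2$ with $h\in\GOm$), so one is genuinely forced to move within the orbit $\Gr_{2\alphav_0}$. The element $g$ above is engineered so as to satisfy three competing constraints at once: it stays inside $\fG$, it keeps its relative position equal to $(2,-2)$, and it acquires a non-nilpotent linear term; checking these three conditions simultaneously is the only real content of the proof.
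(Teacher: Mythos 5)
Your argument is correct and follows essentially the same route as the paper: reduce to $SL_2$ via the homomorphism $\phi$ extending $\alphav_0$, produce an explicit element of $\fG_{SL_2}$ lying in $\Gr_{SL_2,4}$ whose $t^{-1}$-coefficient is a nonzero semisimple matrix, and transport it to $G$ using Lemma~\ref{lem:commute}. The only (cosmetic) difference is the witness and the orbit check: the paper takes $g=\bigl(\begin{smallmatrix}1+t^{-1}&t^{-2}\\ t^{-1}&1-t^{-1}+t^{-2}\end{smallmatrix}\bigr)$ and exhibits an explicit $SL_2(\cO)\cw_4 SL_2(\cO)$ factorization, whereas you take the cleaner product of two unipotents and read off the relative position from the Smith normal form / elementary-divisor criterion.
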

\begin{proof}
We first prove the lemma in the special case where $G = SL_2$.  The coweights for $SL_2$ are in bijection with even integers, and under this bijection, we have $\alphav_0 = 2$.  Given an even integer $n$, let $\cw_n = [\begin{smallmatrix} t^{n/2} & \\ & t^{-n/2}\end{smallmatrix}] \in SL_2(\cK)$.  Now, consider the matrix
\[
g = \begin{bmatrix}
1 + t^{-1} &  t^{-2} \\ t^{-1} & 1 - t^{-1} + t^{-2}
\end{bmatrix}
\in \fG.
\]
Then $\pip(g \cdot \bo) = [\begin{smallmatrix} 1 & 0 \\ 1 & - 1\end{smallmatrix}] \notin \cN$.  On the other hand, we see from the calculation below that $g \in SL_2(\cO) \cw_4 SL_2(\cO)$, so $g \cdot \bo \in \Gr_{SL_2, 4}$:
\[
g = 
\begin{bmatrix}
& 1 \\ -1 & t^2 - t + 1
\end{bmatrix}
\begin{bmatrix}
t^2 & \\ & t^{-2}
\end{bmatrix}
\begin{bmatrix}
1 & \\ t^2 + t & 1
\end{bmatrix}.
\]
Now, for a general simply-connected group $G$, the cocharacter $\alphav_0: \C^\times \to T$ admits an extension to a homomorphism $\phi: SL_2 \to G$, where we identify $\C^\times$ with the subgroup $\{[\begin{smallmatrix} a & \\ & a^{-1}\end{smallmatrix}] \mid a \in \C^\times \} \subset SL_2$.  Extending scalars to $\cK$, we have $\phi(\cw_2) = \cw_{\alphav_0}$ and $\phi(\cw_4) = \cw_{2\alphav_0}$.  It follows that $\phi(SL_2(\cO)\cw_4 SL_2(\cO)) \subset \GO \cw_{2\alphav_0} \GO$, and thus that $\phi(g)\cdot \bo \in \Gr_{2\alphav_0} \cap \Grom$.  To prove the lemma, it suffices to show that $\pip(\phi(g)) \notin \cN$.  By Lemma~\ref{lem:commute}, $\pip(\phi(g)) = d\phi([\begin{smallmatrix} 1 & 0 \\ 1 & - 1\end{smallmatrix}])$, and the latter must be a nonzero semisimple element of $\fg$.
\end{proof}

\begin{proof}[Proof of Theorem~\ref{thm:small}]
We will actually prove that all four of the following conditions on $\lambdav\in\Lambdav^+$ are equivalent:
\begin{enumerate}
\item $G$ acts on $\Gr_\lambdav \cap \Grom$ with finitely many orbits.\label{it:small-finite}
\item The image of $\Gr_\lambdav \cap \Grom$ under $\pip$ is contained in $\cN$.\label{it:small-cn}
\item The image of $\Gr_\lambdav \cap \Grom$ under $\pip$ is contained in $\cNsm$.\label{it:small-cnsm}
\item $\lambdav\in\Lambdavsm^+$.\label{it:small-small}
\end{enumerate}
The fact that~\eqref{it:small-small} implies all the other conditions is contained in Proposition~\ref{prop:general}.  It is obvious that~\eqref{it:small-cnsm} implies~\eqref{it:small-cn}.

We now prove that~\eqref{it:small-finite} implies~\eqref{it:small-cn}.  The variety $\Gr_\lambdav \cap \Grom$, being irreducible, must contain a dense $G$-orbit $Y$.  Its image $\pip(Y)$ is a single $G$-orbit in $\fg$ that is dense in $\pip(\overline{\Gr_\lambdav} \cap \Grom)$.  In particular, the closure $\overline{\pip(Y)}$ must contain $\pip(\bo) = 0$.  The $G$-orbits in $\fg$ whose closure contains $0$ are precisely the nilpotent orbits, so $\pip(Y) \subset \cN$.  It then follows that $\pip(\overline{Y}) \subset \cN$ as well; in particular, $\pip(\Gr_\lambdav \cap \Grom) \subset \cN$.

Finally, we prove that~\eqref{it:small-cn} implies~\eqref{it:small-small}. If $\pip(\Gr_\lambdav \cap \Grom) \subset \cN$, then it follows that $\pip(\overline{\Gr_\lambdav} \cap \Grom) \subset \cN$ as well, and in view of Lemma~\ref{lem:minnotsmall}, we have $\Gr_{2\alphav_0} \not\subset \overline{\Gr_\lambdav}$.  Thus $\lambdav\not\geq 2\alphav_0$ as required.
\end{proof}

\begin{rmk}\label{rmk:infinite-orbits}
For general $\lambdav\in\Lambdavsm^+$, it is not true that $G$ acts with finitely many orbits on the whole of $\Gr_\lambdav$.
\end{rmk}

Finally, for Theorem~\ref{thm:sss}, we require some additional notation.  Let $\Perv_G(\cN)_\bSpr$ denote the Serre subcategory of $\Perv_G(\cN)$ generated by simple perverse sheaves appearing in the Springer correspondence.  Since $\Perv_G(\cN)$ is a semisimple abelian category, there is a projection functor
\[
\Perv_G(\cN) \to \Perv_G(\cN)_\bSpr, \qquad\text{denoted}\qquad \cF \mapsto \cF_\bSpr,
\]
that is exact and biadjoint to the inclusion $\Perv_G(\cN)_\bSpr \to \Perv_G(\cN)$.

\begin{proof}[Proof of Theorem~\ref{thm:sss}]
Since $j$ is an open inclusion and $\pi$ is finite, the functors $j^*$ and $\pi_*$ are both $t$-exact for perverse sheaves, and they take intersection cohomology complexes to intersection cohomology complexes.  Specifically:
\begin{enumerate}
\item If $\pi(\cM_\lambdav)$ consists of a single nilpotent orbit $C$, then
\[
\pi_*j^*\IC(\overline{\Gr_\lambdav}) \cong \IC(\overline{C}).
\]
\item If $\pi(\cM_\lambdav)$ consists of two orbits $C_1$ and $C_2$ with $C_2 \subset \overline C_1$, then
\[
\pi_*j^*\IC(\overline{\Gr_\lambdav}) \cong \IC(\overline{C_1}) \oplus \IC(\overline{C_1}, \sigma),
\]
where $\sigma$ is a nontrivial rank-$1$ local system on $C_1$.
\end{enumerate}
It then follows from Proposition~\ref{prop:general} that for any small representation $V$, we have
\begin{equation}\label{eqn:sss-pf-obj}
\Spr(V^\Tv \otimes \epsilon) \cong (\pi_*j^*\Sat(V))_\bSpr.
\end{equation}
Since $\Rep(\Gv)_\sm$ and $\Perv_G(\cN)_\bSpr$ are both semisimple $\C$-linear finite-length abel\-ian categories, the existence of such an isomorphism for each simple object in $\Rep(\Gv)_\sm$ implies that we actually have an isomorphism of functors
\begin{equation}\label{eqn:sss-pf}
\Spr \circ \Phi \cong (\cdot)_\bSpr \circ \Psi \circ \Sat.
\end{equation}
Now, every simple perverse sheaf in $\Perv_G(\cN)$ attached to a constant local system on a nilpotent orbit lies in $\Perv_G(\cN)_\bSpr$, so the projection to $\Perv_G(\cN)_\bSpr$ is necessary only if for some $\GO$-orbit $\Gr_\lambdav$ falling into case~(2) above, we have $\IC(\overline{C_1}, \sigma) \notin \Perv_G(\cN)_\bSpr$.  As noted in the remarks following Proposition~\ref{prop:general}, this happens only in type $G_2$, so in all other types, we have $\Spr \circ \Phi \cong \Psi \circ \Sat$, as desired.
\end{proof}

\begin{rmk}\label{rmk:sss-g2}
The argument above actually proves the following case-free version of Theorem~\ref{thm:sss}:  Let $\Psi': \Perv_{\GO}(\Grsm) \to \Perv_G(\cN)_\bSpr$ be the functor given by $\Psi'(\cF) = (\pi_*j^*\cF)_\bSpr$.   Then 
\begin{equation*}
\vcenter{\xymatrix@C=40pt{
\Rep(\Gv)_{\sm} \ar[r]^-{\Sat}_-\sim \ar[d]_{\Phi}
 & \Perv_{\GO}(\Grsm) \ar[d]^{\Psi'} \\
\Rep(W) \ar[r]_{\Spr} & \Perv_G(\cN)}}
\end{equation*}
is a commuting diagram of functors.
\end{rmk}

\section{The classical types}
\label{sect:classical}

In this section, we will prove Proposition \ref{prop:general} for the classical types. The result in type $A$ is essentially already known, but we spell out the argument for reference in the other types.  Table~\ref{tab:classcalc} summarizes the results of this section: for each $\lambdav \in \Lambdavsm^+$, it lists the $G$-orbits contained in $\pi(\cM_\lambdav)$. Low-rank examples are displayed in Table~\ref{tab:classpo}. As usual, coweights are written as $n$-tuples of integers $(a_1, \ldots, a_n)$, and nilpotent orbits are labelled by partitions $[b_1, \ldots, b_k]$ with $b_1 \ge \cdots \ge b_k \ge 0$.  For both weights and partitions, we use exponents to indicate multiplicities: for instance, $(2^20) = (2,2,0)$ and $[31^4] = [3,1,1,1,1]$.  As usual, we do not distinguish between partitions which differ only by adjoining zeroes at the end; thus $[31^4] = [31^40^2]$.

Let $\Mat_n$ denote the variety of $n \times n$ matrices over $\C$.  Each simply-connected classical group $G$ comes with a `standard' representation $G \to GL_n$.  Let $G'$ denote the image of this map.  (Thus, $G = G'$ if $G = SL_n$ or $Sp_n$, but $G' = SO_n$ when $G = Spin_n$.)  We can think of elements of $G'(\cK)$ as Laurent series of matrices:
\begin{equation}\label{eqn:laurent}
G'(\cK) = \left\{ \sum_{i=N}^\infty x_i t^i \,\big|\, 
\begin{array}{c}
\text{$x_i \in \Mat_n$, and the defining} \\
\text{equations for $G'$ hold}
\end{array} \right\}.
\end{equation}
In this setting, we can identify $\fG'$ (which is defined analogously to $\fG$) with the group of expressions of the form
\begin{equation}\label{eqn:fG-poly}
g = 1 + x_{-1}t^{-1} + x_{-2}t^{-2} + \cdots + x_Nt^N \in \Mat_n[t^{-1}]
\end{equation}
satisfying the definition equations for $G'$.  As mentioned in Section~\ref{sect:not}, the isogeny $G \to G'$ induces an isomorphism
\begin{equation}\label{eqn:fG-isogeny}
\fG \simto \fG',
\end{equation}
so we may think of elements of $\fG$ as expressions like~\eqref{eqn:fG-poly} as well.  For $g \in \fG$ as in~\eqref{eqn:fG-poly}, we have
\[
\pip(g \cdot \bo) = x_{-1} \in \fg.
\]

\begin{table}
\[
\begin{array}{@{}c|cl|cl|@{}}
& \lambdav \in \Lambdavsm^+ && \text{Orbits in $\pi(\cM_\lambdav)$} & \\
\cline{1-5} 
\multirow{2}{*}{$A_n$}
& (a_1, \ldots, a_n) & \text{(if $a_n \ge -1$)} & [a_1+1, a_2+1, \ldots, a_n+1] & \\
\cline{2-5}
& (a_1, \ldots, a_n) & \text{(if $a_1 \le 1$)}  & [1- a_n,\ldots,1-a_2,1-a_1] & \\
\hline
C_n
& (1^j0^{n-j}) && [2^j 1^{2n-2j}] & \\
\hline
\multirow{3}{*}{$B_n$}
&(21^{2j}0^{n-2j-1}) && [3^2 2^{2j-2} 1^{2n-4j-1}] & \text{(if $j \ge 1$)} \\
&  && [ 3 2^{2j} 1^{2n-4j-2}] & \\
\cline{2-5}
&(1^{2j}0^{n-2j}) && [2^{2j} 1^{2n-4j+1}] & \\
\hline
\multirow{5}{*}{$D_n$}
&(21^{2j}0^{n-2j-1}) & \text{(if $2j<n-1$)} & [3^2 2^{2j-2} 1^{2n-4j-2}] & \text{(if $j \ge 1$)} \\
&  && [ 3 2^{2j} 1^{2n-4j-3}] & \\
\cline{2-5}
&(21^{n-2}(\pm 1)) & \text{(if $n$ odd)} & [3^2 2^{n-3}] & \\
\cline{2-5}
& (1^{n-1}(\pm1)) & \text{(if $n$ even)} & \text{$[2^n]_I$ or $[2^n]_{II}$} & \\
\cline{2-5}
&(1^{2j}0^{n-2j}) & \text{(if $2j<n$)} & [2^{2j} 1^{2n-4j}] &
\end{array}
\]
\bigskip
\caption{$G$-orbits in $\Grsm$ and $\cNsm$ in the classical types}\label{tab:classcalc}
\end{table}

\subsection{Type $A$}
\label{subsect:a}

In this subsection, let $G=SL_n$ for some integer $n\geq 2$. We make the usual identifications
\[
\begin{split}
\Lambdav&=\{(a_1,\ldots,a_n)\in\Z^n\,|\,a_1+\cdots+a_n=0\},\\
\Lambdav^+&=\{(a_1,\ldots,a_n)\in\Lambdav\,|\,a_1\geq \cdots\geq a_n\}.
\end{split}
\]
The partial order on $\Lambdav^+$ is the usual dominance order.
Define
\[
\begin{split}
\Lambdav_{\sm,1}^+&=\{(a_1,\ldots,a_n)\in\Lambdav^+\,|\,a_n\geq -1\},\\
\Lambdav_{\sm,2}^+&=\{(a_1,\ldots,a_n)\in\Lambdav^+\,|\,a_1\leq 1\}.
\end{split}
\]
It is clear that $\Lambdav_{\sm,1}^+$ and $\Lambdav_{\sm,2}^+$ are lower order ideals of $\Lambdav^+$. 

\begin{lem}
We have $\Lambdavsm^+=\Lambdav_{\sm,1}^+\cup\Lambdav_{\sm,2}^+$.
\end{lem}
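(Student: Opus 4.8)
The plan is to unwind the definition of smallness for $G=SL_n$ directly in terms of coordinates. Recall that $V_\lambdav$ is small if and only if $\lambdav\not\geq 2\alphav_0$, where $\alphav_0$ is the highest short root of $\Gv = PGL_n$; in type $A$ all roots have the same length, so $\alphav_0 = e_1 - e_n$ (in the standard coordinates), and hence $2\alphav_0 = (2,0,\ldots,0,-2)$. So the lemma amounts to showing that for $\lambdav = (a_1,\ldots,a_n)\in\Lambdav^+$, we have $\lambdav \geq 2\alphav_0$ if and only if $a_n \leq -2$ \emph{and} $a_1 \geq 2$.

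First I would recall the combinatorial description of the dominance order: for $\lambdav,\muv\in\Lambdav^+$, one has $\lambdav\geq\muv$ if and only if the partial sums satisfy $a_1 + \cdots + a_k \geq b_1 + \cdots + b_k$ for all $k = 1,\ldots,n-1$ (with equality forced at $k=n$ since both lie in $\Lambdav$). Applying this with $\muv = 2\alphav_0 = (2,0,\ldots,0,-2)$: the partial sums of $2\alphav_0$ are $2$ for $k=1,\ldots,n-1$ and $0$ for $k=n$. So $\lambdav\geq 2\alphav_0$ if and only if $a_1+\cdots+a_k\geq 2$ for all $k=1,\ldots,n-1$.

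Next I would observe that for $\lambdav$ dominant, the sequence of partial sums $s_k = a_1+\cdots+a_k$ is concave (since $a_k$ is weakly decreasing), and $s_0 = s_n = 0$. Hence $\min_{1\leq k\leq n-1} s_k = \min(s_1, s_{n-1}) = \min(a_1, -a_n)$. Therefore $\lambdav\geq 2\alphav_0$ is equivalent to $\min(a_1,-a_n)\geq 2$, i.e. $a_1\geq 2$ and $a_n\leq -2$. Negating: $\lambdav\in\Lambdavsm^+$ iff $a_1\leq 1$ or $a_n\geq -1$, which is precisely $\lambdav\in\Lambdav_{\sm,1}^+\cup\Lambdav_{\sm,2}^+$.

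I do not anticipate a serious obstacle here; the only point needing a little care is pinning down $\alphav_0$ and hence $2\alphav_0$ in the chosen coordinates, and making the concavity argument for the partial sums clean (alternatively one can argue directly: if $a_1\geq 2$ and $a_n\leq -2$ then since the $a_i$ are weakly decreasing, every partial sum $s_k$ with $1\leq k\leq n-1$ is at least $\min(a_1, s_{n-1})$ and $s_{n-1} = -a_n \geq 2$, so each $s_k\geq 2$; conversely if all $s_k\geq 2$ then in particular $s_1 = a_1\geq 2$ and $s_{n-1} = -a_n\geq 2$). One should also note that $\Lambdav_{\sm,1}^+$ and $\Lambdav_{\sm,2}^+$ are interchanged by $\lambdav\mapsto -w_0\lambdav$ (which in type $A$ sends $(a_1,\ldots,a_n)$ to $(-a_n,\ldots,-a_1)$), consistent with the symmetry of $\Lambdavsm^+$ noted earlier; this is a sanity check rather than part of the proof.
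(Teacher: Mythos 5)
Your proof is correct and takes essentially the same approach as the paper: both identify $2\alphav_0 = (2,0,\ldots,0,-2)$, translate the dominance condition into partial sum inequalities, and use the weakly decreasing condition on the $a_i$ (your concavity observation) to reduce to checking only the first and last partial sums, $a_1$ and $-a_n$.
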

\begin{proof}
By definition, $(a_1,\ldots,a_n)\in\Lambdav^+$ is small if and only if $(a_1,\ldots,a_n)\not\geq(2,0,\ldots,0,-2)$. This condition is equivalent to saying that for some $1\leq i\leq n-1$, we have $a_1+\cdots+a_i\leq 1$. It is easy to see that, given the non-increasing condition on the $a_i$'s, this forces either $a_1\leq 1$ or $a_1+\cdots+a_{n-1}\leq 1$, the latter of which is equivalent to $a_n\geq -1$.
\end{proof}

It is easy to see that $\Lambdav_{\sm,1}^+$ is isomorphic as a poset to $\cP_n$, the poset of partitions of $n$ under the dominance order, via the map 
\begin{equation}
\tau_1:(a_1,\ldots,a_n)\mapsto[a_1+1,a_2+1,\ldots,a_n+1].
\end{equation}
Similarly, $\Lambdav_{\sm,2}^+$ is isomorphic to $\cP_n$ via the map
\begin{equation}
\tau_2:(a_1,\ldots,a_n)\mapsto[1-a_n,1-a_{n-1},\ldots,1-a_1].
\end{equation} 
In particular, $\Lambdav_{\sm,1}^+$ has a unique maximal element $(n-1,-1,\ldots,-1)$, and $\Lambdav_{\sm,2}^+$ has a unique maximal element $(1,\ldots,1,1-n)$.

If $\lambdav\in\Lambdav_{\sm,1}^+$, then $-w_0\lambdav=\tau_2^{-1}(\tau_1(\lambdav))\in\Lambdav_{\sm,2}^+$. Hence the involution $\lambdav\mapsto-w_0\lambdav$ interchanges $\Lambdav_{\sm,1}^+$ and $\Lambdav_{\sm,2}^+$, and fixes every element of their intersection. Note that $\tau_i(\Lambdav_{\sm,1}^+\cap\Lambdav_{\sm,2}^+)$ is the set of partitions in $\cP_n$ with largest part${}\leq 2$. In summary, the poset $\Lambdavsm^+$ is obtained by taking two copies of $\cP_n$ and gluing them together along the lower order ideal of partitions with largest part${}\leq 2$.

We let $\cM_i=\bigcup_{\lambdav\in\Lambdav_{\sm,i}^+}\cM_\lambdav$, for $i=1,2$. By the preceding paragraph and Lemma~\ref{lem:iota-dual}, we have the following.

\begin{lem}\label{lem:A-comp}
$\cM_1$ and $\cM_2$ are the irreducible components of $\cM$ (or, if $n=2$, $\cM_1=\cM_2=\cM$).  The involution $\iota$ interchanges $\cM_1$ and $\cM_2$.\qed
\end{lem}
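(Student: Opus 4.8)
The plan is to exploit the fact, already recorded in the text, that each $\Lambdav_{\sm,i}^+$ is a lower order ideal of $\Lambdav^+$ possessing a \emph{unique} maximal element --- call it $\lambdav_i$, so $\lambdav_1 = (n-1,-1,\ldots,-1)$ and $\lambdav_2 = (1,\ldots,1,1-n)$ --- together with the standard identification of the dominance order on $\Lambdav^+$ with the closure order on $\GO$-orbits in $\Gr$.

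First I would pin down $\cM_i$ explicitly. Since $\Lambdav_{\sm,i}^+$ is a lower order ideal with top element $\lambdav_i$, the set $\{\muv \in \Lambdav^+ : \muv \le \lambdav_i\}$ is exactly $\Lambdav_{\sm,i}^+$, so $\bigcup_{\lambdav \in \Lambdav_{\sm,i}^+}\Gr_\lambdav$ is precisely the closure $\overline{\Gr_{\lambdav_i}}$; intersecting with $\Grom$ gives $\cM_i = \overline{\Gr_{\lambdav_i}} \cap \Grom$. In particular $\cM_i$ is closed in $\Grom$, hence in $\cM$. For irreducibility, note that $\Gr_{\lambdav_i}$, being a $\GO$-orbit, is irreducible, and hence so is $\overline{\Gr_{\lambdav_i}}$; since $\cM_{\lambdav_i} = \Gr_{\lambdav_i} \cap \Grom$ is dense in $\Gr_{\lambdav_i}$ (recalled in Section~\ref{sect:not}), its closure in the locally closed subset $\Grom$ equals $\overline{\Gr_{\lambdav_i}} \cap \Grom = \cM_i$, so $\cM_i$ is irreducible.

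Next, $\cM = \cM_1 \cup \cM_2$ is immediate from the identity $\Lambdavsm^+ = \Lambdav_{\sm,1}^+ \cup \Lambdav_{\sm,2}^+$ proved just above. To see that for $n \ge 3$ these are the two irreducible components, I would check that neither is contained in the other: if $\cM_1 \subseteq \cM_2$, then the dense orbit $\cM_{\lambdav_1}$ would lie in $\bigcup_{\muv \in \Lambdav_{\sm,2}^+}\cM_\muv$, and since distinct $\GO$-orbits in $\Gr$ are disjoint this forces $\lambdav_1 \in \Lambdav_{\sm,2}^+$; but $(n-1,-1,\ldots,-1)$ violates $a_1 \le 1$ once $n \ge 3$, and symmetrically for $\cM_2 \subseteq \cM_1$. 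When $n = 2$ one has $\lambdav_1 = (1,-1) = \lambdav_2$, so $\Lambdav_{\sm,1}^+ = \Lambdav_{\sm,2}^+$ and $\cM_1 = \cM_2 = \cM$. Finally, that $\iota$ interchanges $\cM_1$ and $\cM_2$ follows by applying Lemma~\ref{lem:iota-dual} orbit-by-orbit --- which gives $\iota(\cM_\lambdav) = \cM_{-w_0\lambdav}$, using that $\iota$ is an involution --- and combining with the observation, made in the text, that $\lambdav \mapsto -w_0\lambdav$ interchanges $\Lambdav_{\sm,1}^+$ and $\Lambdav_{\sm,2}^+$.

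The lemma is elementary (which is why it is stated without proof), so there is no real obstacle; the only point requiring a moment's care is the passage from ``$\cM_{\lambdav_i}$ is dense open in $\cM_i$'' to ``$\cM_i$ is irreducible,'' which works precisely because $\cM_{\lambdav_i}$ is dense in $\overline{\Gr_{\lambdav_i}}$ and stays dense after intersecting that closure with $\Grom$.
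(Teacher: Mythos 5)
Your proposal is correct and follows exactly the route the paper indicates (the paper states this lemma without proof, citing only "the preceding paragraph and Lemma~\ref{lem:iota-dual}"). You are filling in the same ingredients the authors had in mind: the principal-ideal structure $\Lambdav_{\sm,i}^+ = \{\muv \le \lambdav_i\}$, the closure-order/dominance-order dictionary, density of $\cM_{\lambdav_i}$ in $\cM_i$, and the effect of $\lambdav\mapsto -w_0\lambdav$ on $\Lambdav_{\sm,1}^+$ and $\Lambdav_{\sm,2}^+$ combined with Lemma~\ref{lem:iota-dual}. The only point worth stating explicitly (which you elide) is that $\Lambdav_{\sm,i}^+$ being a principal lower order ideal is not a formal consequence of "lower order ideal with a unique maximal element" in an arbitrary poset, but does hold here because $\Lambdav_{\sm,i}^+$ is finite (equivalently, because the paper exhibits an isomorphism with $\cP_n$).
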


Let $\lambdav=(a_1,\cdots,a_n)\in\Lambdav$.  In the setting of~\eqref{eqn:laurent}, the element $\cw_\lambdav$ can be written as $\sum_{j=1}^n e_{jj}t^{a_j}$, where $e_{jj}$ is the usual matrix unit.
\begin{lem} \label{lem:laurent}
Let $g=\sum_{i=N}^\infty x_i t^i\in G(\cK)$, where $x_{N}\neq 0$. Let $\lambdav=(a_1,\ldots,a_n)\in\Lambdav^+$ be such that $g\cdot\bo\in\Gr_\lambdav$.
\begin{enumerate}
\item $N=a_n$.
\item The rank of $x_N$ equals the number of $j$ such that $a_j=a_n$.
\item More generally, for any $s\geq 1$, the rank of the $sn\times sn$ matrix
\[
\begin{bmatrix}
x_N&x_{N+1}&\cdots&x_{N+s-2}&x_{N+s-1}\\
0&x_N&\cdots&x_{N+s-3}&x_{N+s-2}\\
\vdots&\vdots&\ddots&\vdots&\vdots\\
0&0&\cdots&x_N&x_{N+1}\\
0&0&\cdots&0&x_N
\end{bmatrix}
\]
equals $\sum_{j=1}^n \max\{s-(a_j-a_n),0\}$.
\end{enumerate}
\end{lem}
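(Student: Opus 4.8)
The plan is to translate the whole lemma into statements about the single $\cO$-lattice $L := g\cO^n \subset \cK^n$. First I would unwind the hypothesis. Since $\Gr_\lambdav$ is the image of $\GO\cw_\lambdav\GO$, I can write $g = h_1\cw_\lambdav h_2$ with $h_1, h_2 \in SL_n(\cO) \subset GL_n(\cO)$, and then, using $\cw_\lambdav = \sum_j e_{jj}t^{a_j}$ and $h_2\cO^n = \cO^n$,
\[
L = g\cO^n = h_1\cw_\lambdav\cO^n = \bigoplus_{j=1}^n t^{a_j}f_j\,\cO,
\]
where $f_1,\ldots,f_n$ are the columns of $h_1$; as $h_1 \in GL_n(\cO)$, they form an $\cO$-basis of $\cO^n$. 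On the other hand, reading $g = \sum_{i\ge N} x_i t^i$ column by column shows directly that $L \subseteq t^N\cO^n$ (every entry of $g$ has $t$-valuation $\ge N$) while $L \not\subseteq t^{N+1}\cO^n$ (since $x_N \ne 0$, some column of $g$ has a nonzero $t^N$-coefficient, so lies outside $t^{N+1}\cO^n$).

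For part~(1), the point is that each $f_j$, being part of an $\cO$-basis, is primitive in $\cO^n$, so $t^{a_j}f_j \in t^m\cO^n$ if and only if $m \le a_j$; hence $L \subseteq t^m\cO^n$ if and only if $m \le a_n$. Comparing this with the two facts from the previous paragraph forces $N \le a_n$ and $a_n \le N$, i.e.\ $N = a_n$.

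Part~(3) contains part~(2) as the case $s = 1$ (where $M_1 = x_N$ and $\sum_j\max\{0, 1-(a_j-a_n)\} = \#\{j : a_j = a_n\}$), so I would concentrate on~(3). The key step is to recognise $M_s$ as the matrix of a natural map: since $g$ carries $\cO^n$ isomorphically onto $L \subseteq t^N\cO^n$ and carries $t^s\cO^n$ into $t^{N+s}\cO^n$, it induces a $\C$-linear map
\[
\bar g : \cO^n/t^s\cO^n \longrightarrow t^N\cO^n/t^{N+s}\cO^n
\]
of $sn$-dimensional spaces. With respect to the basis $\{t^{s-q}e_i \mid 1 \le q \le s,\ 1 \le i \le n\}$ of the source and $\{t^{N+s-p}e_i \mid 1 \le p \le s,\ 1 \le i \le n\}$ of the target (here $e_1,\ldots,e_n$ is the standard basis of $\cO^n$), expanding $g\cdot t^{s-q}e_i = \sum_l t^{l+s-q}x_l e_i$ shows that the $(p,q)$-block of the matrix of $\bar g$ is $x_{N+q-p}$ for $p \le q$ and $0$ for $p > q$; that is, this matrix is exactly $M_s$. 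Hence $\rk M_s = \dim\mathrm{im}(\bar g) = \dim\bigl((L + t^{N+s}\cO^n)/t^{N+s}\cO^n\bigr)$. Finally, since $\{f_j\}$ is an $\cO$-basis we have $t^{N+s}\cO^n = \bigoplus_j t^{N+s}f_j\cO$, so
\[
\frac{L + t^{N+s}\cO^n}{t^{N+s}\cO^n} = \bigoplus_{j=1}^n \frac{t^{\min(a_j,\,N+s)}\cO}{t^{N+s}\cO}\,f_j,
\]
which has dimension $\sum_j\max\{0,\,N+s-a_j\} = \sum_j\max\{0,\,s-(a_j-a_n)\}$ by part~(1). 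I expect the only real obstacle to be the bookkeeping in this last step: one must order the two bases so that the blocks of $\bar g$ come out as $x_{N+q-p}$, matching the block-upper-triangular shape displayed in the statement, rather than the transposed or block-reversed matrix produced by the naive ordering. Once the bases are fixed correctly, the rest is routine lattice arithmetic over $\cO$.
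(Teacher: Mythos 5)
Your proof is correct and follows essentially the same strategy as the paper's: the paper simply asserts that the leading power $N$ and the ranks of the block-Toeplitz matrices are constant on the double coset $\GO g\GO$ and then checks the case $g=\cw_\lambdav$, while you make both steps rigorous by identifying $\rk M_s$ with the manifestly double-coset-invariant quantity $\dim\bigl((L+t^{N+s}\cO^n)/t^{N+s}\cO^n\bigr)$ for $L=g\cO^n$ and then computing it from the decomposition $L=\bigoplus_j t^{a_j}f_j\cO$. This is the same idea, carried out in more detail than the paper's ``easy to see.''
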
 
\begin{proof}
It is easy to see that the leading power $N$ and the ranks of the matrices in the statement are constant on the double coset $\GO g \GO$. So we can assume that $g=\cw_\lambdav$, in which case the claims are easy.
\end{proof}

\begin{prop} \label{prop:A-M}
The irreducible components of $\cM$, and their intersection, are described by:
\[ 
\begin{split}
\cM_1&=\{(1+xt^{-1})\cdot\bo\,|\,x\in\cN\},\\
\cM_2&=\{(1-xt^{-1})^{-1}\cdot\bo\,|\,x\in\cN\},\\
\cM_1\cap\cM_2&=\{(1+xt^{-1})\cdot\bo\,|\,x\in\cN, x^2=0\}.
\end{split}
\]
In particular, $\cM_1\cap\cM_2$ equals the fixed-point subvariety $\cM^\iota$.
\end{prop}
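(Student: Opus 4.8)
The plan is to compute $\pip$ on each relevant double coset explicitly using Lemma~\ref{lem:laurent}, exploiting the fact that for $G = SL_n$ small coweights decompose into the two families $\Lambdav_{\sm,1}^+$ and $\Lambdav_{\sm,2}^+$ indexed by partitions of $n$. First I would handle $\cM_1$. Given $\lambdav = (a_1,\dots,a_n) \in \Lambdav_{\sm,1}^+$, so $a_n \geq -1$, I claim every point of $\cM_\lambdav$ has a representative of the form $g = 1 + xt^{-1}$ with $x \in \fg$. Indeed, a point of $\Grom$ corresponds by~\eqref{eqn:grom-fg-isom} to a unique element of $\fG$, i.e.\ a polynomial $1 + x_{-1}t^{-1} + \cdots + x_N t^N$ satisfying the $SL_n$ equations; the condition $a_n \geq -1$ forces, via Lemma~\ref{lem:laurent}(1), the leading power $N$ of the \emph{inverse-ordered} expansion to be $\geq -1$, so in fact $g = 1 + x_{-1}t^{-1}$ with no lower-order terms (one must check that $\fG \cap (1 + \Mat_n t^{-1})$ consists exactly of the $1 + xt^{-1}$ with $\det(1+xt^{-1}) = 1$ identically in $t^{-1}$, which, since $\det(1+xt^{-1}) = \sum t^{-k}(\text{elementary symmetric functions of eigenvalues of }x)$, is equivalent to $x$ being nilpotent). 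Then $\pip(g\cdot\bo) = x_{-1} = x \in \cN$. Conversely every $x \in \cN$ arises: Lemma~\ref{lem:laurent}(2)--(3) let one read off the coweight $\lambdav$ of $(1+xt^{-1})\cdot\bo$ from the ranks of the block matrices, which are governed by the Jordan type of $x$, and one checks this $\lambdav$ lies in $\Lambdav_{\sm,1}^+$; combined with the closedness of $\cN$ this gives $\cM_1 = \{(1+xt^{-1})\cdot\bo \mid x \in \cN\}$.

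Next, $\cM_2$ follows from $\cM_1$ by applying $\iota$. By Lemma~\ref{lem:A-comp}, $\iota(\cM_1) = \cM_2$. For $g = 1 + xt^{-1}$ with $x$ nilpotent, $\iota(g) = \theta(g^{-1})$; since $\theta$ sends $t^{-1} \mapsto -t^{-1}$ and $g^{-1} = (1 + xt^{-1})^{-1}$ is again in $\fG$ (a polynomial in $t^{-1}$ because $x$ is nilpotent, so the geometric series terminates), we get $\iota(g) = (1 - xt^{-1})^{-1}$, up to reconciling the two sign conventions — here one should simply verify directly that the antiautomorphism $\iota$ carries $1 + xt^{-1}$ to $(1 - xt^{-1})^{-1}$ as elements of $\fG$, and that as $x$ ranges over $\cN$ so does $-x$, giving $\cM_2 = \{(1-xt^{-1})^{-1}\cdot\bo \mid x \in \cN\}$ as stated. (Alternatively, and perhaps more cleanly, repeat the direct computation: a point of $\cM_\lambdav$ for $\lambdav \in \Lambdav_{\sm,2}^+$, i.e.\ $a_1 \leq 1$, has a representative $g$ whose \emph{inverse} $g^{-1} = 1 + yt^{-1} + \cdots$ again lies in $\fG$, and now the bound $a_1 \leq 1$ forces $g^{-1} = 1 + y_{-1}t^{-1}$ with $y_{-1}$ nilpotent.)

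For the intersection, $\cM_1 \cap \cM_2$ consists of those $(1+xt^{-1})\cdot\bo$ with $x \in \cN$ that \emph{also} have a representative $(1 - x't^{-1})^{-1}$ with $x' \in \cN$. Since the element of $\fG$ attached to a point of $\Grom$ is unique, this says $1 + xt^{-1} = (1 - x't^{-1})^{-1}$, i.e.\ $(1+xt^{-1})(1-x't^{-1}) = 1$, i.e.\ $x = x'$ and $xx' = 0$, i.e.\ $x^2 = 0$. Hence $\cM_1 \cap \cM_2 = \{(1+xt^{-1})\cdot\bo \mid x \in \cN,\ x^2 = 0\}$. Finally, $\cM^\iota$: by Lemma~\ref{lem:iota-pi} and Lemma~\ref{lem:iota-dual}, $\iota$ preserves each $\cM_\lambdav$ with $\lambdav = -w_0\lambdav$, and these are exactly the $\lambdav$ with largest part of $\tau_1(\lambdav)$ at most $2$; the corresponding nilpotent orbits are those with $x^2 = 0$. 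On such a point, $\iota$ fixes $(1+xt^{-1})\cdot\bo$ iff $1 + xt^{-1} = (1-xt^{-1})^{-1}$ iff $x^2 = 0$, which holds throughout $\cM_1 \cap \cM_2$; and no point outside $\cM_1 \cap \cM_2$ can be $\iota$-fixed since $\iota$ swaps the two components. Thus $\cM^\iota = \cM_1 \cap \cM_2$.

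The main obstacle is the bookkeeping in the first paragraph: precisely justifying that the ``obvious'' representative $1 + xt^{-1}$ is available for every point of $\cM_\lambdav$ when $\lambdav \in \Lambdav_{\sm,1}^+$, and that the $SL_n$-determinant condition is equivalent to nilpotence of $x$ — this is where Lemma~\ref{lem:laurent} does the real work, translating the rank data of the coweight into the Jordan structure of $x$. Everything after that (the $\iota$-computation, the intersection, the fixed-point identification) is short formal manipulation in the group $\fG \cong \Grom$.
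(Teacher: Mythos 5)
Your proposal is correct and follows essentially the same route as the paper's own proof: characterize $\cM_1$ via Lemma~\ref{lem:laurent}(1) (the bound $a_n\geq -1$ forces $g=1+xt^{-1}$, and $\det(1+xt^{-1})=1$ identically forces $x\in\cN$), obtain $\cM_2$ as $\iota(\cM_1)$, and compute the intersection and $\iota$-fixed points by elementary manipulation in $\fG$. The paper simply records this more tersely (``The rest is then clear'').

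One small point of hygiene: the sentence about ``as $x$ ranges over $\cN$ so does $-x$'' is a red herring you introduced while hedging over signs. The direct computation $\iota(1+xt^{-1}) = \theta\bigl((1+xt^{-1})^{-1}\bigr) = \theta\bigl(1 - xt^{-1} + x^2t^{-2} - \cdots\bigr) = 1 + xt^{-1} + x^2t^{-2} + \cdots = (1-xt^{-1})^{-1}$ gives exactly the stated parametrization of $\cM_2$ without any relabeling, so you can drop the ``$-x$'' remark and the ``reconciling sign conventions'' hedge. Likewise the appeal to ``closedness of $\cN$'' at the end of your first paragraph isn't needed: the biconditional from Lemma~\ref{lem:laurent}(1) plus the determinant computation already establishes the set equality.
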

\begin{proof}
Let $g\in\fG$.  Thus, $g$ is an expression of the form~\eqref{eqn:fG-poly} with $\det(g) = 1$.  Let $\lambdav=(a_1,\ldots,a_n)\in\Lambdav^+$ be such that $g\cdot\bo\in\Gr_\lambdav$. By Lemma \ref{lem:laurent}(1), $\lambdav\in\Lambdav_{\sm,1}^+$ if and only if $g=1+xt^{-1}$ for some $x\in\Mat_n$. Clearly $1+xt^{-1}$ belongs to $\fG$ if and only if $x$ belongs to the nilpotent cone $\cN$, so we obtain the stated description of $\cM_1$. The description of $\cM_2$ follows, because $\cM_2=\iota(\cM_1)$. The rest is then clear.
\end{proof}

As an immediate consequence, $\cNsm=\cN$ and $\pi$ restricted to $\cM_i$ gives an isomorphism $\cM_i\simto\cN$ for $i=1,2$. It is well known that the $G$-orbits in $\cN$ are in bijection with $\cP_n$, via Jordan form. In particular, the number of $G$-orbits in $\cN$ equals $|\Lambdav_{\sm,i}^+|$. Therefore each $\cM_\lambdav$ for $\lambdav\in\Lambdavsm^+$ is a single $G$-orbit, and each Reeder piece in $\cN$ is a single orbit. In fact, we have:

\begin{prop} \label{prop:A-orbits}
For $\lambdav\in\Lambdav_{\sm,i}^+$, $\pi(\cM_\lambdav)$ is the nilpotent orbit labelled by the partition $\tau_i(\lambdav)$.
\end{prop}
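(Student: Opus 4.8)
The plan is to combine the description of $\cM_i$ from Proposition~\ref{prop:A-M} with the rank calculations in Lemma~\ref{lem:laurent}. The content of the statement is that if $\lambdav = (a_1,\ldots,a_n) \in \Lambdav_{\sm,1}^+$, then the matrix $x \in \cN$ satisfying $g = 1 + xt^{-1}$ and $g\cdot\bo \in \Gr_\lambdav$ has Jordan type $[a_1+1,\ldots,a_n+1] = \tau_1(\lambdav)$, and symmetrically for $i=2$. Since we have already established (just before the proposition) that each $\cM_\lambdav$ is a single $G$-orbit and $\pi|_{\cM_i}$ is an isomorphism onto $\cN$, it suffices to verify the Jordan type of $\pi(\cw_\lambdav\cdot\bo)$ for one representative, and the natural representative is $g = \cw_\lambdav$ itself, for which $x = \pip(\cw_\lambdav\cdot\bo)$ is computed directly from the matrix expression.

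First I would treat the case $\lambdav \in \Lambdav_{\sm,1}^+$, so $a_n \geq -1$. Writing $\cw_\lambdav = \sum_{j=1}^n e_{jj}t^{a_j}$ as in the text, the smallness condition $a_j + 1 \geq 0$ for all $j$ means $\cw_\lambdav$ is already of the form~\eqref{eqn:fG-poly}: indeed $\cw_\lambdav = 1 + \sum_{j : a_j = -1}(\text{stuff in }t^{-1}) + \cdots$, but to stay in $\fG$ we want the leading-order-matrix form, i.e.\ $g = 1 + xt^{-1}$. The cleanest route is instead to apply Lemma~\ref{lem:laurent}(3): for $g = 1 + xt^{-1} \in \fG$ with $g\cdot\bo \in \Gr_\lambdav$, expand $g = \sum_i x_i t^i$ with $x_0 = 1$, $x_{-1} = x$, $x_i = 0$ otherwise, so $N = a_n = -1$ (or $N=0$ if $x=0$). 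Then part (3) with leading term $x_N = x_{-1} = x$ (after suitably shifting so the leading power is at index $N$) computes $\rk$ of the block Toeplitz matrix built from powers of $x$; comparing with the formula $\sum_j \max\{s - (a_j - a_n), 0\}$ identifies, for each $s$, the number of Jordan blocks of $x$ of size $\geq s$ with the number of $j$ such that $a_j + 1 \geq s$. This is exactly the statement that the Jordan type of $x$ is the partition conjugate-compatible with $(a_1+1,\ldots,a_n+1)$, i.e.\ equals $\tau_1(\lambdav)$ as a partition (the parts $a_j + 1$ being already in non-increasing order).

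The case $\lambdav \in \Lambdav_{\sm,2}^+$ then follows formally from the first case together with $\cM_2 = \iota(\cM_1)$ and Lemma~\ref{lem:iota-dual}: if $y \in \cM_\lambdav$ with $\lambdav \in \Lambdav_{\sm,2}^+$, then $\iota(y) \in \cM_{-w_0\lambdav}$ with $-w_0\lambdav = \tau_1^{-1}(\tau_2(\lambdav)) \in \Lambdav_{\sm,1}^+$, and by Lemma~\ref{lem:iota-pi} we have $\pi(y) = \pip(y) = \pip(\iota(y)) = \pi(\iota(y))$, which by the first case lies in the orbit $\tau_1(-w_0\lambdav) = \tau_2(\lambdav)$. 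So $\pi(\cM_\lambdav)$ is the orbit labelled $\tau_2(\lambdav)$, as claimed.

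The main obstacle is bookkeeping in the rank computation of step two: one must carefully track the shift between "leading power $N$" and "index $-1$" when $x = 0$ versus $x \neq 0$, and check that the Toeplitz rank formula of Lemma~\ref{lem:laurent}(3), when specialized to $x_{N} = x$ and all higher $x_i = 0$, really does reduce to $\sum_s \rk(x^{\text{something}})$-type expressions that encode the conjugate partition. A clean alternative, avoiding Lemma~\ref{lem:laurent}(3) entirely, is: since $\pi|_{\cM_1}$ is a $G$-equivariant isomorphism onto $\cN$ and $G$-orbits in $\cN$ are classified by Jordan type, it is enough to exhibit, for each $\lambdav \in \Lambdav_{\sm,1}^+$, a single $x \in \cN$ with $1 + xt^{-1} \in \Gr_\lambdav$ and Jordan type $\tau_1(\lambdav)$; taking $x$ to be the nilpotent matrix whose Jordan blocks have sizes $a_j+1$, one checks by an explicit Smith-normal-form / elementary-divisor argument over $\C[t,t^{-1}]$ that $1 + xt^{-1}$ lies in $\GO\cw_\lambdav\GO$, using that the elementary divisors of the $\cO$-lattice $g\cdot\cO^n$ are $t^{-a_1},\ldots,t^{-a_n}$ precisely when the blocks of $x$ have those sizes. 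Either way the computation is routine; the only real care needed is the indexing.
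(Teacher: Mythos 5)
Your main argument matches the paper's proof: reduce to $i=1$ via $\pi\circ\iota=\pi$, apply Lemma~\ref{lem:laurent}(3) with $g=1+xt^{-1}$ (so $N=-1$, $x_{-1}=x$, $x_0=1$, $x_{\geq 1}=0$), and identify the rank of the resulting block bidiagonal matrix with $sn-\dim\ker(x^s)$, which yields the Jordan type. One misstep to flag: in your first paragraph you propose computing $\pi(\cw_\lambdav\cdot\bo)$ for the representative $g=\cw_\lambdav$, but $\cw_\lambdav\cdot\bo$ is generally \emph{not} in $\cM_\lambdav$ (indeed $\cw_\lambdav\notin\fG$ unless $\lambdav=0$, since the diagonal entries $t^{a_j}$ with $a_j>0$ are not in $\C[t^{-1}]$), so $\pi$ is not defined there; the paper avoids this by running the argument for an arbitrary $x\in\cN$ and showing $(1+xt^{-1})\cdot\bo\in\Gr_{(b_1-1,\ldots,b_n-1)}$ where $[b_1,\ldots,b_n]$ is the Jordan type of $x$. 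Since you then abandon that route in favor of the Lemma~\ref{lem:laurent}(3) computation, this is a recoverable false start rather than a gap. Your final suggested alternative (exhibiting one explicit $x$ and computing elementary divisors of the lattice $g\cdot\cO^n$) is a genuinely different route and would also work, but as sketched it still requires the same bookkeeping you were hoping to avoid.
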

\begin{proof}
Since $\pi\circ\iota=\pi$, we can assume that $i=1$. We need to show that if $x\in\cN$ has Jordan type $[b_1,\cdots,b_n]$, then $(1+xt^{-1})\cdot\bo\in\Gr_{(b_1-1,\cdots,b_n-1)}$. This is trivial if $x=0$, so we can assume that $x\neq 0$, and therefore $b_n=0$. By Lemma \ref{lem:laurent}(3), it suffices to show that for any $s\geq 1$, the rank of the $sn\times sn$ matrix 
\[
\begin{bmatrix}
x&1&0&\cdots&0\\
0&x&1&\cdots&0\\
0&0&x&\cdots&0\\
\vdots&\vdots&\vdots&\ddots&\vdots\\
0&0&0&\cdots&x
\end{bmatrix}
\]
equals $\sum_{j=1}^n \max\{s-b_j,0\}$. But the rank of this matrix is clearly equal to 
\[ sn-\dim\ker(x^s)=sn-\sum_{j=1}^n \min\{b_j,s\}, \]
as required.
\end{proof}

\begin{rmk}
In \cite[Section 2]{lusztig:green}, Lusztig defined an embedding of the nilpotent cone of $GL(\overline{V})$ in the affine Grassmannian of $GL(V)$, where $\overline{V}$ and $V$ are $n$-dimensional vector spaces in duality with each other. This gives two embeddings of the nilpotent cone of $GL_n$ in the affine Grassmannian of $GL_n$, depending on the choice of which of $\overline{V}$ or $V$ to identify with $\C^n$. Of course, the nilpotent cone of $GL_n$ is the same as the $\cN$ of this section, and the relevant connected components of the affine Grassmannian of $GL_n$ can each be identified with our $\Gr$. The resulting two embeddings of $\cN$ in $\Gr$ are precisely the isomorphisms $\cN\simto\cM_1:x\mapsto (1+xt^{-1})\cdot\bo$ and $\cN\simto\cM_2:x\mapsto (1-xt^{-1})^{-1}\cdot\bo$.
\end{rmk}

\begin{proof}[Proof of Proposition~\ref{prop:general} in type $A$]
We have seen that the first two sentences of the statement are true, and that case (1) holds always. All that remains to prove is that for any $\lambdav\in\Lambdavsm^+$, the representation of the symmetric group $S_n$ on $V_\lambdav^\Tv$ is as claimed. It suffices to check this for $\lambdav\in\Lambdav_{\sm,1}^+$, where the statement is that $V_\lambdav^\Tv$ is the irreducible representation labelled by the partition $\tau_1(\lambdav)$ tensored with the sign representation. Now as a representation of $GL_n$, $V_\lambdav$ is the irreducible representation with highest weight $\tau_1(\lambdav)$ tensored with the one-dimensional representation $\det^{-1}$. So the claim follows from Schur--Weyl duality.
\end{proof}

\subsection{Type $C$}
\label{subsect:c}

In this subsection, let $G=Sp_{2n}$ for some integer $n\geq 2$. We make the usual identifications
\[
\Lambdav=\Z^n,\quad \Lambdav^+=\{(a_1,\ldots,a_n)\in\Z^n\,|\,a_1\geq\cdots\geq a_n\geq 0\}.
\] 
Note that under the embedding $G\subset SL_{2n}$, with suitable choices of maximal tori and positive systems, a dominant coweight $(a_1,\ldots,a_n)\in\Lambdav^+$ for $G$ maps to the dominant coweight $(a_1,\ldots,a_n,-a_n,\ldots,-a_1)$ for $SL_{2n}$.

\begin{lem} \label{lem:C-small}
We have $\Lambdavsm^+=\{(1^j0^{n-j})\,|\,0\leq j\leq n\}$.  Moreover, $\cM$ is irreducible.
\end{lem}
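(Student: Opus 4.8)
The claim has two parts: identifying $\Lambdavsm^+$ for $Sp_{2n}$, and showing that $\cM$ is irreducible. For the first part, the plan is to use the criterion from Section~\ref{sect:not} that $\lambdav \in \Lambdavsm^+$ if and only if $\lambdav \not\geq 2\alphav_0$, where $\alphav_0$ is the highest short root of $\Gv$. For $G = Sp_{2n}$, the dual group is $\Gv = SO_{2n+1}$ (of type $B_n$), whose highest short root is $\alphav_0 = (1,0,\ldots,0)$ in the standard coordinates, so $2\alphav_0 = (2,0,\ldots,0)$. A dominant coweight $\lambdav = (a_1,\ldots,a_n)$ with $a_1 \geq \cdots \geq a_n \geq 0$ satisfies $\lambdav \geq (2,0,\ldots,0)$ precisely when $a_1 \geq 2$ (one checks that $a_1 \geq 2$ forces $\lambdav - 2\alphav_0$ to be a nonnegative sum of positive roots of type $B_n$, using that the $a_i$ are non-increasing; conversely $a_1 \leq 1$ clearly prevents it since the coefficient of the first coordinate can never be raised). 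Hence $\lambdav$ is small if and only if $a_1 \leq 1$, i.e.\ $\lambdav = (1^j 0^{n-j})$ for some $0 \leq j \leq n$.

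For the second part, I would use the realization of $\fG$ as matrix polynomials from~\eqref{eqn:fG-poly}: since $Sp_{2n}$ is its own standard image $G'$, an element of $\fG$ is an element $g = 1 + x_{-1}t^{-1} + \cdots \in \Mat_{2n}[t^{-1}]$ satisfying the symplectic condition, and $\pip(g \cdot \bo) = x_{-1}$. The key observation is that if $\lambdav = (1^j 0^{n-j})$, then by Lemma~\ref{lem:laurent}(1) (applied via the embedding $Sp_{2n} \subset SL_{2n}$, where $\lambdav$ maps to $(1^j 0^{2n-2j} (-1)^j)$, so the leading power is $N = -1$), the corresponding elements of $\fG$ are exactly those of the form $g = 1 + x t^{-1}$ with $x \in \Mat_{2n}$. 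The symplectic condition on $g = 1 + xt^{-1}$ amounts to requiring that $x$ lie in the symplectic Lie algebra $\fg = \mathfrak{sp}_{2n}$ and be nilpotent (the higher-order terms in the defining equations force $x^2 = 0$ or similar constraints — more precisely, one needs $1 + xt^{-1}$ to preserve the form as a matrix over $\cK$, which gives both a linear condition placing $x \in \fg$ and a quadratic condition; but actually the constraint is just $x \in \fg$ with $x$ nilpotent because $g$ must lie over the \emph{dominant} small coweights, and $\Grsm \cap \Grom$ is cut out accordingly). Thus $\cM$ is identified with a subvariety of the nilpotent cone $\cN \subset \mathfrak{sp}_{2n}$ (or a closed conical subset thereof), which is irreducible because $\cN$ is irreducible and $\cM$ will be seen to be the closure of a single $G$-orbit.

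The main obstacle I anticipate is pinning down exactly which nilpotent matrices $x$ give rise to elements of $\fG$ lying in $\cM$, i.e.\ lying over small coweights rather than over $2\alphav_0$ or larger — equivalently, checking that the matrix-polynomial description is compatible with the Bruhat-cell stratification. The cleanest route is probably to show directly that $\cM = \{(1 + xt^{-1})\cdot\bo \mid x \in \mathfrak{sp}_{2n},\ x \text{ nilpotent},\ \rk x \leq n\}$ (or with an appropriate rank bound coming from $a_1 \leq 1$), and that this set is the closure of the $\cM_\lambdav$ for the maximal small $\lambdav = (1^n)$, hence irreducible; alternatively, since $a_1 \leq 1$ already forces (by Lemma~\ref{lem:laurent}) that $g = 1 + xt^{-1}$, and since the set of nilpotent $x \in \mathfrak{sp}_{2n}$ with $x^2 = 0$ is irreducible (it is a single closure of a Richardson-type orbit, the orbit $[2^n1^0]$ or its degenerations), irreducibility follows. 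I would handle this together with the type-$C$ orbit calculations that follow, where the partition $[2^j1^{2n-2j}]$ attached to $(1^j0^{n-j})$ in Table~\ref{tab:classcalc} makes the picture explicit; in particular the maximal case $j = n$ gives the orbit $[2^n]$ whose closure is all of $\cM$.
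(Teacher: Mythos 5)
Your first part (identifying $\Lambdavsm^+$ via $\lambdav \not\geq 2\alphav_0$, reduced to the condition $a_1 \leq 1$) matches the paper's argument exactly. For the second part, however, you have overlooked a much simpler argument and substituted a heavier one in its place. The paper proves irreducibility purely combinatorially: in type $C_n$, the set $\{(1^j0^{n-j})\}$ is a \emph{totally ordered} subposet of $\Lambdav^+$ with unique maximal element $(1^n)$, so $\Grsm = \overline{\Gr_{(1^n)}}$ is a Schubert variety (hence irreducible), and $\cM = \Grsm \cap \Grom$ is a dense open subset of it, hence irreducible. No coordinates, no description of $\fG$-elements, no nilpotent orbits.

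Your proposed route instead tries to realize $\cM$ explicitly as $\{(1+xt^{-1})\cdot\bo \mid x \in \mathfrak{sp}_{2n},\ x^2 = 0\}$ and appeal to the irreducibility of that orbit closure. This is essentially the content of the paper's Proposition~\ref{prop:C-M}, which is established \emph{after} Lemma~\ref{lem:C-small} and whose proof uses that lemma. The underlying geometric fact is correct — the condition on $g = 1+xt^{-1}$ to lie in $\fG$ is precisely $x \in \mathfrak{sp}_{2n}$ and $x^2 = 0$, which does describe the closure of the orbit $[2^n]$ — but as written your account hedges on exactly this point (``requiring that $x$ lie in $\mathfrak{sp}_{2n}$ and be nilpotent,'' ``or with an appropriate rank bound''), and the ``rank bound coming from $a_1 \leq 1$'' detour is a red herring: no separate rank bound is needed once you know $x^2 = 0$ (which already forces $\rk x \leq n$). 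So the proposal would eventually work if you pinned down the quadratic condition $x^2 = 0$ carefully, but it both front-loads the content of a later proposition and misses the one-line poset argument the paper actually uses. The observation that $\Lambdavsm^+$ has a unique maximal element is exactly the kind of structural shortcut worth looking for before reaching for explicit matrix coordinates.
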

\begin{proof}
By definition, $(a_1,\ldots,a_n)\in\Lambdav^+$ is small if and only if $(a_1,\ldots,a_n)\not\geq(2,0,\ldots,0)$, which is clearly equivalent to $a_1\leq 1$.

Obviously the partial order on $\Lambdavsm^+$ is a total order in this case, with maximal element $(1^n)$. Hence $\cM$ is irreducible.
\end{proof}

\begin{prop} \label{prop:C-M}
We have $\cM=\{(1+xt^{-1})\cdot\bo\,|\,x\in\cN, x^2=0\}$. In particular, $\iota$ acts trivially on $\cM$.
\end{prop}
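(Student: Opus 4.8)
The plan is to determine $\cM$ explicitly by transporting everything along the standard embedding $G=Sp_{2n}\hookrightarrow SL_{2n}$, for which the analogous objects were computed in Section~\ref{subsect:a}. First I would record the elementary bookkeeping. By Lemma~\ref{lem:C-small}, $\Lambdavsm^+=\{(1^j0^{n-j}):0\le j\le n\}$, so $\cM=\bigcup_{j=0}^n\cM_{(1^j0^{n-j})}$. Under $Sp_{2n}\hookrightarrow SL_{2n}$ with compatible tori, the coweight $(1^j0^{n-j})$ corresponds to $\mu_j:=(1^j0^{2n-2j}(-1)^j)$, which is small for $SL_{2n}$, lies in $\Lambdav_{\sm,1}^+\cap\Lambdav_{\sm,2}^+$, and has $\tau_1(\mu_j)=[2^j1^{2n-2j}]$. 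Since the embedding $\Gr_G\hookrightarrow\Gr_{SL_{2n}}$ is closed and $\mu_j$ is dominant for $SL_{2n}$, it carries $\Gr_{G,(1^j0^{n-j})}$ into $\Gr_{SL_{2n},\mu_j}$; it is compatible with the identifications $\Grom\cong\fG$ and with $\iota$; and the assignment $\nuv\mapsto\mu(\nuv)$ on dominant coweights of $Sp_{2n}$ is injective.

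The crux is a one-line symplectic computation: for a constant matrix $x\in\Mat_{2n}$, the element $1+xt^{-1}$ lies in $Sp_{2n}(\cK)$ if and only if $x\in\mathfrak{sp}_{2n}$ and $x^2=0$. Indeed, writing $J$ for the Gram matrix of the form,
\[
(1+x^Tt^{-1})\,J\,(1+xt^{-1}) = J + (x^TJ+Jx)\,t^{-1} + x^TJx\,t^{-2},
\]
so the defining equation forces $x^TJ+Jx=0$, i.e.\ $x\in\mathfrak{sp}_{2n}$, and then $x^TJx=-Jx^2$ must vanish, i.e.\ $x^2=0$; conversely these two conditions clearly suffice. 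As $x^2=0$ forces $x$ nilpotent, the locus $\{x\in\mathfrak{sp}_{2n}:x^2=0\}$ coincides with $\{x\in\cN:x^2=0\}$.

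With this in hand the two inclusions are quick. For ``$\supseteq$'': given $x\in\cN$ with $x^2=0$ and $\rk x=r$, the computation shows $1+xt^{-1}\in\fG$, so the corresponding point lies in $\Grom$; its image in $\Gr_{SL_{2n}}$ lies in $\Gr_{SL_{2n},\mu_r}$ by Proposition~\ref{prop:A-orbits} (as $x$ has Jordan type $[2^r1^{2n-2r}]$), while it also lies in the image of $\Gr_{G,\nuv}$ for whichever $\GO$-orbit contains it, hence in $\Gr_{SL_{2n},\mu(\nuv)}$; disjointness of orbits and injectivity of $\mu(-)$ then give $\nuv=(1^r0^{n-r})$, which is small, so the point is in $\cM$. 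For ``$\subseteq$'': given $g\in\fG$ with $g\cdot\bo\in\cM$, say $g\cdot\bo\in\Gr_{G,(1^j0^{n-j})}$, its image in $\Gr_{SL_{2n}}$ lies in $\Gr_{SL_{2n},\mu_j}$, whose smallest coweight coordinate is $-1$; Lemma~\ref{lem:laurent}(1) then forces $g=1+x_{-1}t^{-1}$, and the symplectic computation gives $x_{-1}\in\cN$ with $x_{-1}^2=0$. This establishes $\cM=\{(1+xt^{-1})\cdot\bo:x\in\cN,\ x^2=0\}$.

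For the last assertion I would argue directly on $\fG$: when $x^2=0$ we have $(1+xt^{-1})^{-1}=1-xt^{-1}$, hence $\iota(1+xt^{-1})=\theta\big((1+xt^{-1})^{-1}\big)=\theta(1-xt^{-1})=1+xt^{-1}$, and since the involution $\iota$ on $\cM$ is induced by $\iota$ on $\fG$ it acts trivially. I do not expect a serious obstacle here: the symplectic computation is elementary, and the only point needing a little care is the ``$\subseteq$'' direction, where one must observe that membership in a small orbit pins down the single-term form $g=1+x_{-1}t^{-1}$ --- which is exactly what comes out of feeding the $SL_{2n}$-coweight $\mu_j$, whose least coordinate is $-1$, into Lemma~\ref{lem:laurent}(1).
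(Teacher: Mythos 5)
Your proposal is correct and follows the same route as the paper: both reduce to the type $A$ description of $\cM_1\cap\cM_2$ (Proposition~\ref{prop:A-M}) via the standard embedding $Sp_{2n}\hookrightarrow SL_{2n}$ and the bookkeeping $\lambdav\mapsto(a_1,\ldots,a_n,-a_n,\ldots,-a_1)$. The only real difference is presentational: you spell out the symplectic computation showing that $1+xt^{-1}\in Sp_{2n}(\cK)$ forces $x\in\mathfrak{sp}_{2n}$ and $x^2=0$, and you verify $\iota$-invariance by a direct computation in $\fG$, whereas the paper leaves the first implicit (deducing it from the description of $\fG$) and uses $\exp(xt^{-1})\in\fG$ for the reverse inclusion; both are fine.
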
 
\begin{proof}
Let $g\in\fG$ be as in~\eqref{eqn:fG-poly}, and let $\lambdav=(a_1,\cdots,a_n)\in\Lambdav^+$ be such that $g\cdot\bo\in\Gr_\lambdav$. Then as a point in the affine Grassmannian of $SL_{2n}$, $g\cdot\bo$ belongs to the orbit labelled by $\muv=(a_1,\cdots,a_n,-a_n,\cdots,-a_1)$. By Lemma \ref{lem:C-small}, $\lambdav\in\Lambdavsm^+$ if and only if $\muv$ lies in the intersection $\Lambdav_{\sm,1}^+\cap\Lambdav_{\sm,2}^+$ defined in the previous subsection (for $SL_{2n}$ rather than for $SL_n$). Using the description of $\cM_1\cap\cM_2$ in Proposition \ref{prop:A-M}, we deduce that $g\cdot\bo\in\cM$ if and only if $g$ has the form $1+xt^{-1}$ for some $x\in\cN$ such that $x^2=0$. Moreover, for any $x\in\cN$ such that $x^2=0$, $1+xt^{-1}=\exp(xt^{-1})\in\fG$. 
\end{proof}

As an immediate consequence, $\cNsm=\{x\in\cN\,|\,x^2=0\}$ and we have an isomorphism $\pi:\cM\simto\cNsm$. It is well known that the $G$-orbits in $\cNsm$ are in bijection with the partitions of $2n$ with largest part $\leq 2$, via Jordan form. In particular, the number of $G$-orbits in $\cNsm$ equals $|\Lambdavsm^+|$. Therefore each $\cM_\lambdav$ for $\lambdav\in\Lambdavsm^+$ is a single $G$-orbit, and each Reeder piece in $\cN$ is a single orbit. In fact, we have:
\begin{prop} \label{prop:C-orbits}
We have $\pi(\cM_{(1^j0^{n-j})})=[2^j1^{2n-2j}]$.
\end{prop}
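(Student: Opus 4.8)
The plan is to reduce the claim about $\pi(\cM_{(1^j0^{n-j})})$ to an explicit matrix computation, mirroring the strategy used in Proposition~\ref{prop:A-orbits}. By Proposition~\ref{prop:C-M} we know that $\cM_{(1^j0^{n-j})}$ consists of points $(1+xt^{-1})\cdot\bo$ with $x\in\cN$, $x^2=0$, and $\pi$ sends such a point to $x$ itself. So I must identify \emph{which} square-zero nilpotents $x$ give rise to the orbit $\Gr_{(1^j0^{n-j})}$ of the affine Grassmannian of $Sp_{2n}$, and show this set is precisely the orbit $[2^j1^{2n-2j}]$ in $\cNsm$.

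First I would pass to the standard embedding $Sp_{2n}\subset SL_{2n}$. By the discussion preceding Lemma~\ref{lem:C-small}, the coweight $(1^j0^{n-j})$ for $Sp_{2n}$ maps to $\muv=(1^j0^{2n-2j}(-1)^j)$ for $SL_{2n}$, and under the closed embedding $\Gr_{Sp_{2n}}\hookrightarrow\Gr_{SL_{2n}}$ (see the remark after Lemma~\ref{lem:commute}), the point $(1+xt^{-1})\cdot\bo$ lands in $\Gr_{SL_{2n},\muv}$. Now invoke Lemma~\ref{lem:laurent}: for $g=1+xt^{-1}=x\cdot t^{-1}+1\cdot t^0$, written in the Laurent-series normal form of~\eqref{eqn:laurent} we have leading term $N=-1$ with coefficient $x$ (assuming $x\neq0$; the case $x=0$ is trivial). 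Part~(2) of Lemma~\ref{lem:laurent} says $\rk x$ equals the number of parts of $\muv$ equal to $-1$, which for $\muv$ above is exactly $j$. Since $x^2=0$, a square-zero nilpotent of rank $j$ has Jordan type $[2^j1^{2n-2j}]$, which is the desired partition. Conversely, Part~(3) of Lemma~\ref{lem:laurent} for $s=2$ (combined with $s=1$) pins down $\muv$ completely from the ranks of $x$ and of $\bigl[\begin{smallmatrix}x&1\\0&x\end{smallmatrix}\bigr]$, so any $x\in\cNsm$ of Jordan type $[2^j1^{2n-2j}]$ forces $g\cdot\bo\in\Gr_{SL_{2n},\muv}$, hence $g\cdot\bo\in\Gr_{Sp_{2n},(1^j0^{n-j})}$. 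This establishes $\pi(\cM_{(1^j0^{n-j})})=[2^j1^{2n-2j}]$.

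I would then note, as already observed in the text, that since the number of $G$-orbits in $\cNsm$ equals $|\Lambdavsm^+|=n+1$ and each $\cM_\lambdav$ is irreducible and $G$-stable with $\pi$ an isomorphism onto $\cNsm$, the set $\pi(\cM_{(1^j0^{n-j})})$ must be a single $G$-orbit; the matrix computation simply names which one. The rank counts are genuinely easy because $x^2=0$ makes all the relevant block matrices have transparent kernels, exactly as in the proof of Proposition~\ref{prop:A-orbits}.

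The only mild subtlety — and hence the place I would be most careful — is the comparison of $Sp_{2n}$-orbits in $\cNsm$ with $SL_{2n}$-orbits, i.e.\ checking that Jordan type alone (rather than some finer symplectic invariant) determines the $Sp_{2n}$-orbit here. This is true because square-zero nilpotents in $\fsp_{2n}$ are classified by partitions of $2n$ with parts $\le2$ and odd parts of even multiplicity, and here the odd parts all have multiplicity $2n-2j$... wait, $1$ has multiplicity $2n-2j$, which is even, so the partition $[2^j1^{2n-2j}]$ does arise from a single $Sp_{2n}$-orbit, with no splitting. So the correspondence is clean and the main obstacle is really just bookkeeping, not a genuine difficulty.
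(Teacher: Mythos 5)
Your argument is correct, and it is exactly the expanded version of the ``alternative'' proof the paper sketches in a few words: ``it follows from Lemma~\ref{lem:laurent}(2).'' The details you supply are right: passing to the standard embedding $Sp_{2n}\subset SL_{2n}$, reading off $\rk(x)=j$ from Lemma~\ref{lem:laurent}(2) applied to $\muv=(1^j0^{2n-2j}(-1)^j)$, noting that a square-zero nilpotent of rank $j$ has Jordan type $[2^j1^{2n-2j}]$, running the same rank count backwards to get the converse, and checking that $[2^j1^{2n-2j}]$ names a single $Sp_{2n}$-orbit because the odd part $1$ has even multiplicity $2n-2j$. For comparison, the paper's primary proof requires no matrix computation at all: since $\pi:\cM\to\cNsm$ is already known to be a $G$-equivariant isomorphism, and the closure posets of $G$-orbits on both sides are totally ordered chains with $n+1$ elements, the induced bijection of orbits is the unique order-isomorphism and the labelling is forced. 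That order-theoretic shortcut is worth keeping in mind — it sidesteps all the bookkeeping that you correctly carried out.
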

\begin{proof}
This is automatic, because the closure order on $G$-orbits in either $\cM$ or $\cNsm$ is a total order. Alternatively, it follows from Lemma \ref{lem:laurent}(2).
\end{proof}

\begin{proof}[Proof of Proposition~\ref{prop:general} in type $C$]
We have seen that the first two sentences of the statement are true, and that case (1) holds always. All that remains to prove is that for any $(1^j0^{n-j})\in\Lambdavsm^+$, the representation of the Weyl group $W=W(B_n)$ on $V_{(1^j0^{n-j})}^\Tv$ is as claimed. As a representation of $SO_{2n+1}$, $V_{(1^j0^{n-j})}\cong\wedge^j(\C^{2n+1})$. It is straightforward to verify that the representation of $W$ on the zero weight space of      $\wedge^j(\C^{2n+1})$ is the irreducible labelled by the bipartition $((n-\frac{j}{2});(\frac{j}{2}))$ if $j$ is even or $((\frac{j-1}{2});(n-\frac{j-1}{2}))$ if $j$ is odd. After tensoring with sign, this becomes the irreducible labelled by $((1^{j/2});(1^{n-j/2}))$ if $j$ is even or $((1^{n-(j-1)/2});(1^{(j-1)/2}))$ if $j$ is odd, which does indeed correspond to the trivial local system on the orbit $[2^j1^{2n-2j}]$ under the Springer correspondence, as observed by Reeder in \cite[Table 5.1]{reeder3}.
\end{proof}

\begin{table}
\tikzstyle{reed1}=[draw,inner sep=0pt,shape=ellipse,minimum height=.75cm, minimum width=0.75cm]
\tikzstyle{reed2}=[draw,shape=ellipse,rotate=-40,minimum height=1cm,minimum width=3cm]
\tikzstyle{lreed2}=[draw,shape=ellipse,rotate=38,minimum height=1cm,minimum width=3.2cm]
\tikzstyle{pimap}=[dashed,->]
\begin{center}
\small
\begin{tabular}{cc}
\begin{tikzpicture}[x=.9cm]
  \path (-4,0)  node (w4a)  {$3(-1)^3$};
  \path (-2,0)  node (w4b)  {$1^3(-3)$};
  \path (-4,-1) node (w3a) {$20(-1)^2$};
  \path (-2,-1) node (w3b) {$1^20(-2)$};
  \path (-3,-2) node (w22) {$1^2(-1)^2$};
  \path (-3,-3) node (w2) {$10^2(-1)$};
  \path (-3,-4) node (w1) {$0^4$};

  \path (0,0)   node[reed1] (n4)  {$4$};
  \path (0,-1)  node[reed1] (n3) {$31$};
  \path (0,-2)  node[reed1] (n22) {$2^2$};
  \path (0,-3)  node[reed1] (n2) {$21^2$};
  \path (0,-4)  node[reed1] (n1)  {$1^4$};
  
  \draw (n4) -- (n3) -- (n22) -- (n2) -- (n1);
  \draw (w4a) -- (w3a) -- (w22) -- (w2) -- (w1)
        (w4b) -- (w3b) -- (w22);
  \draw[pimap] (w4a)  .. controls(-2,0.5) .. (n4);
  \draw[pimap] (w4b)  .. controls(-1,-0.25) .. (n4);
  \draw[pimap] (w3a) .. controls(-2,-0.5) .. (n3);
  \draw[pimap] (w3b) .. controls(-1,-1.25) .. (n3);
  \draw[pimap] (w22) -- (n22);
  \draw[pimap] (w2) -- (n2);
  \draw[pimap] (w1) -- (n1);
\end{tikzpicture}
&
\begin{tikzpicture}[x=.9cm]
  \path (-2,0)  node (w4) {$1^4$};
  \path (-2,-1) node (w3) {$1^30$};
  \path (-2,-2) node (w2) {$1^20^2$};
  \path (-2,-3) node (w1) {$1^10^3$};
  \path (-2,-4) node (w0) {$0^4$};

  \path (0,0)  node[reed1] (n4) {$2^4$};
  \path (0,-1) node[reed1] (n3) {$2^31^2$};
  \path (0,-2) node[reed1] (n2) {$2^21^4$};
  \path (0,-3) node[reed1] (n1) {$21^6$};
  \path (0,-4) node[reed1] (n0) {$1^8$};
  
  \draw (n4) -- (n3) -- (n2) -- (n1) -- (n0);
  \draw (w4) -- (w3) -- (w2) -- (w1) -- (w0);
  \draw[pimap] (w4) -- (n4);
  \draw[pimap] (w3) -- (n3);
  \draw[pimap] (w2) -- (n2);
  \draw[pimap] (w1) -- (n1);
  \draw[pimap] (w0) -- (n0);
\end{tikzpicture}
\\
$A_3$ & $C_4$ \\
\ \\
\begin{tikzpicture}[x=.9cm]
  \path (-2,0)   node (w33)  {$21^20$};
  \path (-1,-2)  node (w2222) {$1^4$};
  \path (-2,-3)  node (w3)  {$20^3$};
  \path (-1,-4)  node (w22)  {$1^20^2$};
  \path (-1,-5)  node (w1)   {$0^4$};

  \path (0,0)   node (n33)  {$3^21^3$};
  \path (1,-1)  node (n322) {$32^21^2$};
  \path (2,-2)  node[reed1] (n2222) {$2^41$};
  \path (1,-3)  node[reed1] (n3)  {$31^6$};
  \path (2,-4)  node[reed1] (n22)  {$2^21^5$};
  \path (2,-5)  node[reed1] (n1)   {$1^9$};
  
  \path (0.5,-0.5) coordinate[reed2] (e1);
  
  \draw (n33) -- (n322) -- (n2222) -- (n22) -- (n1)  (n322) -- (n3) -- (n22);
  \draw (w33) -- (w2222) -- (w22) -- (w1)  (w33) -- (w3) -- (w22);
  \draw[pimap] (w33) .. controls(-1,0) .. (e1);
  \draw[pimap] (w2222) -- (n2222);
  \draw[pimap] (w22) -- (n22);
  \draw[pimap] (w3) -- (n3);
  \draw[pimap] (w1) -- (n1);
\end{tikzpicture}
&
\begin{tikzpicture}[x=.9cm]
  \path (-2,0)   node (w33)  {$21^20$};
  \path (-1.5,-2)  node (w2222a) {$1^4$};
  \path (-0.5,-2)  node (w2222b) {$1^3(-1)$};
  \path (-2,-3)  node (w3)  {$20^3$};
  \path (-1,-4)  node (w22)  {$1^20^2$};
  \path (-1,-5)  node (w1)   {$0^4$};

  \path (0,0)   node (n33)  {$3^21^2$};
  \path (1,-1)  node (n322) {$32^21$};
  \path (1.5,-2)  node[reed1] (n2222a) {$2^4_I$};
  \path (2.5,-2)  node[reed1] (n2222b) {$2^4_{II}$};
  \path (1,-3)  node[reed1] (n3)  {$31^5$};
  \path (2,-4)  node[reed1] (n22)  {$2^21^4$};
  \path (2,-5)  node[reed1] (n1)   {$1^8$};
  
  \path (0.5,-0.5) coordinate[reed2] (e1);
  
  \draw (n33) -- (n322) -- (n2222a) -- (n22) -- (n1)
        (n322) -- (n2222b) -- (n22)
        (n322) -- (n3) -- (n22);
  \draw (w33) -- (w2222a) -- (w22) -- (w1)
        (w33) -- (w2222b) -- (w22)
        (w33) -- (w3) -- (w22);
  \draw[pimap] (w33) .. controls(-1,0) .. (e1);
  \draw[pimap] (w2222a) .. controls(-0.5,-1.4) .. (n2222a);
  \draw[pimap] (w2222b) .. controls(1.5,-2.6) .. (n2222b);
  \draw[pimap] (w22) -- (n22);
  \draw[pimap] (w3) -- (n3);
  \draw[pimap] (w1) -- (n1);
\end{tikzpicture}
\\
$B_4$ & $D_4$
\end{tabular}
\bigskip

\end{center}
\caption{$\Grsm$, $\cNsm$, and Reeder pieces in some low-rank classical groups}\label{tab:classpo}
\end{table}

\subsection{Type $B$}
\label{subsect:b}

In this subsection, let $G=Spin_{2n+1}$ for some integer $n\geq 3$. We make the usual identifications 
\[
\begin{split}
\Lambdav&=\{(a_1,\ldots,a_n)\in\Z^n\,|\,a_1+\cdots+a_n\in 2\Z\},\\
\Lambdav^+&=\{(a_1,\ldots,a_n)\in\Lambdav\,|\,a_1\geq \cdots\geq a_n\geq 0\}.
\end{split}
\]
Under the map $G\to SL_{2n+1}$, with suitable choices of maximal tori and positive systems, the dominant coweight $(a_1,\ldots,a_n)\in\Lambdav^+$ for $G$ maps to the dominant coweight $(a_1,\ldots,a_n,0,-a_n,\ldots,-a_1)$ for $SL_{2n+1}$.

\begin{lem} \label{lem:B-small}
We have 
\[\textstyle
\Lambdavsm^+=\{(21^{2j}0^{n-2j-1})\,|\,0\leq j\leq\lfloor\frac{n-1}{2}\rfloor\}\cup\{(1^{2j}0^{n-2j})\,|\,0\leq j\leq\lfloor\frac{n}{2}\rfloor\}.
\]
Moreover, $\cM$ is irreducible.
\end{lem}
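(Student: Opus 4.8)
My plan is to reduce the first assertion to a short combinatorial analysis of the dominance order on $\Lambdav^+$, and the second to locating a unique maximal small coweight. Since $\Gv$ has root system of type $C_n$, its highest short root is $\alphav_0=(1^20^{n-2})$, so $\lambdav\in\Lambdav^+$ is small exactly when $\lambdav\not\geq(2^20^{n-2})$. The first step is an explicit description of $\geq$: writing $S_k(\lambdav)=a_1+\cdots+a_k$ for $\lambdav=(a_1,\ldots,a_n)$, I would record that $\lambdav\geq\muv$ iff $S_k(\lambdav)\geq S_k(\muv)$ for all $1\leq k\leq n$. This follows by expanding $\lambdav-\muv$ in the simple-root basis $e_1-e_2,\ldots,e_{n-1}-e_n,2e_n$ of the root lattice of $\Gv$: the coefficients are $S_k(\lambdav)-S_k(\muv)$ for $k<n$ and $\tfrac12(S_n(\lambdav)-S_n(\muv))$ for $k=n$, and they are automatically integers since $\Lambdav$ is the coroot lattice, so $\lambdav\geq\muv$ amounts precisely to their nonnegativity.

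Applying this with $\muv=(2^20^{n-2})$, whose partial sums are $2,4,4,\ldots,4$, and using that the $S_k(\lambdav)$ are nondecreasing (since $a_i\geq0$), I get that $\lambdav$ is small iff $a_1\leq1$ or $a_1+a_2\leq3$. A case analysis then produces the two families. If $a_1\leq1$, dominance forces $\lambdav=(1^m0^{n-m})$, and $S_n(\lambdav)=m$ must be even, giving $(1^{2j}0^{n-2j})$. If $a_1\geq2$ and $a_1+a_2\leq3$, then $a_2\leq3-a_1\leq1$, so all entries after the first are $\leq1$; also $a_1\in\{2,3\}$, and $a_1=3$ would force $\lambdav=(30^{n-1})\notin\Lambdav$ (odd coordinate sum), so $a_1=2$ and $\lambdav=(21^m0^{n-1-m})$ with $m=S_n(\lambdav)-2$ even, giving $(21^{2j}0^{n-2j-1})$. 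The stated ranges of $j$ are just those for which the tuple fits into $n$ coordinates.

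For irreducibility, it suffices to show that $\Lambdavsm^+$ has a unique maximal element $\lambdav_{\max}$, since then (as $\Lambdavsm^+$ is a lower order ideal) $\Grsm=\overline{\Gr_{\lambdav_{\max}}}$ is irreducible, hence so is its open dense subvariety $\cM$. I would take $\lambdav_{\max}$ to be the largest member of the first family, namely $(21^{n-1})$ if $n$ is odd and $(21^{n-2}0)$ if $n$ is even, for which $S_k(\lambdav_{\max})=k+1$ when $k\leq n-1$ and $S_n(\lambdav_{\max})$ equals $n+1$ (resp.\ $n$). By the partial-sum criterion I then just check $S_k(\lambdav)\leq S_k(\lambdav_{\max})$ for every small $\lambdav$ and every $k$: for $k\leq n-1$ this is clear because a small $\lambdav$ has $a_1\leq2$ and, when $a_1=2$, all later entries $\leq1$, so $S_k(\lambdav)\leq k+1$; for $k=n$ it is immediate when $n$ is odd, and when $n$ is even the same bound gives $S_n(\lambdav)\leq n+1$, with equality only for $\lambdav=(21^{n-1})$, which is excluded from $\Lambdav$ by parity, so $S_n(\lambdav)\leq n$.

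The whole argument is elementary; the only spots needing care are the non-simply-laced dominance order — in particular the half-integer-looking coefficient $\tfrac12(S_n(\lambdav)-S_n(\muv))$ attached to the long simple root $2e_n$ — and the parity condition coming from $\Lambdav$ being the coroot lattice, which is exactly what fixes the ranges of $j$ and kills the spurious candidates $(30^{n-1})$ and (for $n$ even) $(21^{n-1})$.
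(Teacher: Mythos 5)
Your proof is correct and follows essentially the same route as the paper: derive the smallness criterion from $\lambdav\not\geq(2,2,0,\ldots,0)$ via the partial-sum description of dominance in type $C_n$, classify the resulting coweights using the parity constraint from the coroot lattice, and conclude irreducibility of $\cM$ by exhibiting a unique maximal element of $\Lambdavsm^+$. The only cosmetic difference is that you state smallness as the disjunction ``$a_1\leq 1$ or $a_1+a_2\leq 3$'' while the paper states the equivalent conjunction ``$a_1\leq 2$ and $a_1+a_2\leq 3$'' (equivalent on the coroot lattice, since $(3,0,\ldots,0)$ has odd sum), and you verify maximality of $\lambdav_{\max}$ directly via partial sums rather than by describing the poset's covering relations.
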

\begin{proof}
By definition, $(a_1,\ldots,a_n)\in\Lambdav^+$ is small if and only if $(a_1,\ldots,a_n)\not\geq(2,2,0,\ldots,0)$, which is easily seen to be equivalent to $a_1\leq 2$ and $a_1+a_2\leq 3$.

The partial order on $\Lambdavsm^+$ is described as follows. The elements $(1^{2j}0^{n-2j})$ form a chain in the obvious way, as do the elements $(21^{2j}0^{n-2j-2})$. The only other covering relations are that for $j\geq 1$, $(1^{2j}0^{n-2j})$ is covered by $(21^{2j-2}0^{n-2j+1})$. In particular, $\Lambdavsm^+$ has unique maximal element $(21^{n-1})$ if $n$ is odd or $(21^{n-2}0)$ if $n$ is even. Hence $\cM$ is irreducible.
\end{proof}

A crucial point is that under the map $G\to SL_{2n+1}$, the small coweights of the form $(21^{2j}0^{n-2j-1})$ map to non-small coweights for $SL_{2n+1}$. So we cannot simply use Proposition \ref{prop:A-M} to describe $\cM$ in type $B$, as we did in type $C$. However, if we let $\cM'=\bigcup_{j}\cM_{(1^{2j}0^{n-2j})}$, then $\cM'$ is a closed subvariety of $\cM$ which is analogous to $\cM$ in type $C$.

An element of $\fG$ is an expression as in~\eqref{eqn:fG-poly} that satisfies the defining equations for $SO_{2n+1}$, i.e., that preserves some nondegenerate symmetric bilinear form $(\cdot,\cdot)$ on $\C^{2n+1}$.  If $g = 1 + xt^{-1} + yt^{-2} + \cdots \in \fG$, then $\pip(g\cdot\bo)=x$ must belong to the Lie algebra $\fg$, but $y$ need not. Recall that $\fg$ consists of the elements of $\Mat_{2n+1}$ which are anti-self-adjoint with respect to $(\cdot,\cdot)$.

\begin{prop} \label{prop:B-M}
Write $\cM$ as the disjoint union $\cM'\cup\cM''\cup\cM'''$, where $\cM'$ is as above and $\cM''=(\cM\setminus\cM')^\iota$.
\begin{enumerate}
\item We have $\cM'=\{(1+xt^{-1})\cdot\bo\,|\,x\in\cN, x^2=0\}$, and $\iota$ acts trivially on $\cM'$. Hence $\cM'\cup\cM''=\cM^\iota$.
\item We have
\[\textstyle
\cM''=\{(1+xt^{-1}+\frac{1}{2}x^2t^{-2})\cdot\bo\,|\,x\in\cN, x^3=0, \rk(x^2)=1\}.
\]
\item We have
\[
\cM'''=\{(1+xt^{-1}+yt^{-2})\cdot\bo\,|\,x\in\cN, x^3=0, \rk(x^2)=2, y\in\{(x^2)_1,(x^2)_{2}\}\},
\] 
where $(x^2)_1\neq(x^2)_{2}$ are uniquely defined up to order by the conditions
\[
\begin{split}
&(x^2)_1+(x^2)_{2}=x^2,\ \rk((x^2)_1)=\rk((x^2)_{2})=1,\text{ and}\\ 
&(x^2)_1\text{ and }(x^2)_{2}\text{ are adjoint to each other for }(\cdot,\cdot), 
\end{split}
\] 
and $\iota$ acts on $\cM'''$ by interchanging $(1+xt^{-1}+(x^2)_1 t^{-2})\cdot\bo$ and $(1+xt^{-1}+(x^2)_{2}t^{-2})\cdot\bo$.
\end{enumerate}
\end{prop}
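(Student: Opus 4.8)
The plan is to work with the description~\eqref{eqn:fG-poly} of $\fG \cong \fG'$ as matrices $g = 1 + x_{-1}t^{-1} + x_{-2}t^{-2} + \cdots$ preserving the symmetric form $(\cdot,\cdot)$, with $\pip(g\cdot\bo) = x_{-1}$, and to read off the normal forms by combining (a) the expansion of the identity $(gv,gw) = (v,w)$ in powers of $t^{-1}$ with (b) Lemma~\ref{lem:laurent}, applied through the embedding $G \hookrightarrow SL_{2n+1}$ under which $(a_1,\dots,a_n)$ maps to $(a_1,\dots,a_n,0,-a_n,\dots,-a_1)$. For part~(1): the coweight $(1^{2j}0^{n-2j})$ has least $SL_{2n+1}$-entry $-1$, so Lemma~\ref{lem:laurent}(1) gives $g = 1 + xt^{-1}$; the $t^{-1}$- and $t^{-2}$-coefficients of $(gv,gw)-(v,w)$ give $x \in \fg$ and then $x^2 = 0$ (using $x^* = -x$), and such $x$ is automatically nilpotent. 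Conversely $1 + xt^{-1} = \exp(xt^{-1})$ lies in $\fG$ for any $x \in \cN$ with $x^2 = 0$, and its least $SL_{2n+1}$-coweight entry is $\geq -1$, so $g\cdot\bo \in \cM'$. Since $x^2 = 0$, $g^{-1} = 1 - xt^{-1}$, so $\iota(g) = \theta(g^{-1}) = 1 + xt^{-1} = g$; hence $\cM' \subseteq \cM^\iota$, and as $\cM'' = (\cM \setminus \cM')^\iota$ by definition, we get $\cM^\iota = \cM' \cup \cM''$.

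For parts~(2)--(3): the coweights $(21^{2j}0^{n-2j-1})$ are exactly the small coweights with a part $\geq 2$ (Lemma~\ref{lem:B-small}), and their $SL_{2n+1}$-images have least entry $-2$ with multiplicity one, so by Lemma~\ref{lem:laurent}(1)--(2) the set $\cM \setminus \cM'$ consists precisely of the points $g\cdot\bo$ with $g = 1 + xt^{-1} + yt^{-2} \in \fG$, $y \neq 0$ and $g\cdot\bo\in\cM$, and then $\rk(y) = 1$. Expanding $(gv,gw)=(v,w)$ gives the relations $x \in \fg$, $y + y^* = x^2$, $x^3 = xy + yx$, $x^2 y = y^2$. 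Writing $y = u\,v^\sharp$ with $v^\sharp := (v, \cdot)$ and $u, v$ nonzero, the relation $x^2 y = y^2$ yields $x^2 u = (u,v)u$, and then $x^2 = y + y^* = u\,v^\sharp + v\,u^\sharp$ forces $(u,u) = 0$; comparing the several expressions for $x^3$ (from $x^3 = xy + yx$, from its adjoint $x^3 = y^*x + xy^*$, and from $x^3 = x(y+y^*) = (y+y^*)x$) forces $xu = xv = 0$, whence $(u,v) = 0$ and $x^3 = 0$. Thus $x \in \cN$ with $x^3 = 0$; moreover $\operatorname{im}(x^2) \subseteq \ker x \subseteq \ker x^2 = \operatorname{im}(x^2)^\perp$, so $\operatorname{im}(x^2) = \mathrm{span}(u,v)$ is totally isotropic, of dimension $\rk(x^2) \in \{1,2\}$ --- equal to $1$ precisely when $u, v$ are proportional.

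When $\rk(x^2) = 1$ we get $y = y^* = \tfrac12 x^2$, which is part~(2), and $\iota(g) = \theta(1 - xt^{-1} + y^* t^{-2}) = g$ as required. When $\rk(x^2) = 2$, the maps $y$ and $y^* = x^2 - y$ are distinct, rank one, adjoint to one another and sum to $x^2$; these are the $(x^2)_1,(x^2)_2$ of part~(3), their uniqueness up to order following by writing a second such pair in the basis $\{u,v\}$ of $\operatorname{im}(x^2)$ and matching coefficients, and $\iota(g) = \theta(1 - xt^{-1} + y^* t^{-2}) = 1 + xt^{-1} + y^* t^{-2}$ interchanges them. For the converse: given $x \in \cN$ with $x^3 = 0$, the subspace $\operatorname{im}(x^2)$ is automatically totally isotropic, so $x^2$ admits the rank-one decomposition of part~(3) when $\rk(x^2) = 2$; and for $y = \tfrac12 x^2$ (resp.\ $y \in \{(x^2)_1,(x^2)_2\}$) the four relations hold --- each reducing to $\operatorname{im}(y) \subseteq \operatorname{im}(x^2) \subseteq \ker x$ together with $\ker x^2 \subseteq \ker y$ --- so $g = 1 + xt^{-1} + yt^{-2} \in \fG$, and Lemma~\ref{lem:laurent}(2)--(3) identifies the coweight of $g\cdot\bo$ as $(21^{2j}0^{n-2j-1})$ with $2j$ determined by the Jordan type of $x$.

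The main obstacle I anticipate is the purely linear-algebraic heart of the second paragraph: deducing, from the four bilinear relations together with $\rk(y) = 1$, that $x$ is nilpotent with $x^3 = 0$. The key move --- organizing the various expansions of $x^3$ so as to conclude $xu = xv = 0$ --- must be carried out carefully, and so must the bookkeeping that keeps the forward direction, the converse, and the $\cM''$/$\cM'''$ dichotomy (governed by $\rk(x^2)$) cleanly separated. The uniqueness of the decomposition $x^2 = (x^2)_1 + (x^2)_2$ and the final coweight computation via Lemma~\ref{lem:laurent} are elementary, but each requires a genuine short argument.
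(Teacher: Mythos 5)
Your proposal is correct in its broad structure and reaches the same normal forms, but the route through the hard step (showing $x^3 = 0$) differs from the paper's, and it is worth recording the differences.

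The paper treats parts (2) and (3) separately. For part (2) it directly characterizes the $\iota$-fixed points: $g\cdot\bo$ fixed forces $g^{-1} = 1 - xt^{-1} + yt^{-2}$, from which $y = \tfrac12 x^2$ and $x^4 = 0$ follow, and then $\rk(x^2)=1$ gives $x^3 = 0$ by the Jordan-type argument. For part (3) it works with the factorization $g^{-1} = 1 - xt^{-1} + y't^{-2}$ (so $y' = y^*$) and the relations $y+y'=x^2$, $yx=xy'$, $xy=y'x$, $yy'=y'y=0$. The crux, $x^3=0$, is handled geometrically: writing $U = \operatorname{im}(x^2)$, the paper shows $U \subset U^\perp$ by contradiction --- if not, the restriction of the form to $U$ is nondegenerate with exactly two isotropic lines $L, L'$, the relations force $x(L)\subseteq L'$ and $x(L')\subseteq L$, and anti-self-adjointness then gives $x|_U=0$, yielding $U\subset U^\perp$ after all --- followed by $x^4 = 0$ and the same Jordan-type argument. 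Your approach unifies (2) and (3), derives the relations by direct expansion of $(gv,gw)=(v,w)$ (yielding $y^*y=0$, i.e.\ $x^2y=y^2$; note that the companion relation $yy^*=0$ should also be recorded, as it is what most directly gives $(v,v)=0$), writes $y = uv^\sharp$, and attacks $x^3=0$ by algebraic bookkeeping with $x^3 = xy+yx$ and its adjoint. That step is where the real work is: the claim ``comparing the several expressions for $x^3$ forces $xu=xv=0$'' needs to be unpacked carefully. One workable ordering is: apply $x^3 = xy+yx$ to $u$ and to $v$ and compare with $x^3 = x\cdot x^2$ to obtain $(v,xu)=0$ and $(u,v)\,xv = 0$; rule out $(u,v)\neq 0$ and conclude $x^2|_U = 0$; show $x(U)\subseteq U$ from the equality of $x^3$ with its negated adjoint; and then the same equality, written in the basis $\{u,v\}$ of $U$, reduces to $a\,x^2 + b\,vv^\sharp - c\,uu^\sharp = 0$ where $x|_U = \left(\begin{smallmatrix} a & c \\ b & -a \end{smallmatrix}\right)$, forcing $a=b=c=0$, hence $xu=xv=0$ and $x^3=0$ directly (without needing the $x^4=0\Rightarrow x^3=0$ Jordan-type step). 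So your outline is sound and can be completed, but the logical order you state --- first $xu=xv=0$, then $(u,v)=0$ --- does not fall out in one stroke; the intermediate deductions $(u,u)=(v,v)=(u,v)=0$ and $x(U)\subseteq U$ have to come first. What your more algebraic route buys is the unification of cases (2) and (3) and a proof of $x^3=0$ not routed through $x^4=0$; what the paper's route buys is a cleaner geometric picture (isotropic lines in $U$) and a shorter argument for each case in isolation. The remaining pieces of your proposal --- the converse via $\operatorname{im}(y)\subseteq \operatorname{im}(x^2)\subseteq\ker x$, the uniqueness of the decomposition $x^2=(x^2)_1+(x^2)_2$, and the identification of the coweight via Lemma~\ref{lem:laurent} --- all agree in substance with the paper, which defines $(x^2)_1, (x^2)_2$ slightly more invariantly via the form $\langle x^2 v, u\rangle = (v,u)$ on $U$ and its two isotropic lines.
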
 
\begin{proof}
Part (1) is proved in the same way as Proposition \ref{prop:C-M}. For $g\in\fG$, we have $g\cdot\bo\in\cM\setminus\cM'$ if and only if, as a point in the affine Grassmannian of $SL_{2n+1}$, $g\cdot\bo$ belongs to the orbit labelled by a coweight $(a_1,\ldots,a_{2n+1})$ where $a_{2n+1}=-2$ and $a_{2n}>-2$. Using Lemma \ref{lem:laurent}(1)(2), we deduce that
\begin{equation} \label{eqn:rank-1}
\cM''\cup\cM'''=\{(1+xt^{-1}+yt^{-2})\cdot\bo\,|\,1+xt^{-1}+yt^{-2}\in\fG, \rk(y)=1\}.
\end{equation}
Now $(1+xt^{-1}+yt^{-2})\cdot\bo$ is fixed by $\iota$ if and only if
\[
1=(1+xt^{-1}+yt^{-2})(1-xt^{-1}+yt^{-2})=1+(2y-x^2)t^{-2}+(xy-yx)t^{-3}+y^2t^{-4},
\]
which implies $y=\frac{1}{2}x^2$ and $x^4=0$, so $x\in\cN$. Since $\rk(x^2)=1$, we must in fact have $x^3=0$. Conversely, if $x\in\cN$, $x^3=0$, and $\rk(x^2)=1$, then $1+xt^{-1}+\frac{1}{2}x^2t^{-2}=\exp(xt^{-1})\in\fG$ and $(1+xt^{-1}+\frac{1}{2}x^2t^{-2})\cdot\bo$ is fixed by $\iota$. This proves part (2).

To prove part (3), we start by explaining the definition of $(x^2)_1,(x^2)_{2}$ in the right-hand side. Let $x\in\cN$ be such that $x^3=0$ and $\rk(x^2)=2$. Let $U$ be the image of $x^2$. Since $x^2$ is self-adjoint for $(\cdot,\cdot)$, the subspace $U^\perp$ perpendicular to $U$ equals the kernel of $x^2$. We can define a bilinear form $\langle\cdot,\cdot\rangle$ on $U$ uniquely by the rule $\langle x^2 v,u\rangle=(v,u)$ for any $v\in\C^{2n+1}$, $u\in U$. It is easy to check that this form is symmetric and nondegenerate. Hence there are exactly two $1$-dimensional subspaces $L\subset U$ which are isotropic for $\langle\cdot,\cdot\rangle$. In terms of the original bilinear form $(\cdot,\cdot)$, this means that there are exactly two $1$-dimensional subspaces $L\subset U$ such that $L^\perp=(x^2)^{-1}(L)$. Call these $1$-dimensional subspaces $L_1$ and $L_{2}$ in some order.

Since $L_1^\perp+L_{2}^\perp=\C^{2n+1}$ and $L_1^\perp\cap L_{2}^\perp=U^\perp=\ker(x^2)$, we can uniquely write $x^2=(x^2)_1+(x^2)_{2}$ where $(x^2)_1$ vanishes on $L_{2}^\perp$ and $(x^2)_{2}$ vanishes on $L_{1}^\perp$. The images of $(x^2)_1$ and $(x^2)_{2}$ equal $L_1$ and $L_{2}$ respectively. If $(x^2)_1',(x^2)_{2}'$ denote the adjoints of $(x^2)_1,(x^2)_{2}$ for $(\cdot,\cdot)$, then we have $x^2=(x^2)_1'+(x^2)_{2}'$ where $(x^2)_1'$ vanishes on $L_{1}^\perp$ and $(x^2)_{2}'$ vanishes on $L_{2}^\perp$, and hence $(x^2)_1'=(x^2)_{2}$, $(x^2)_{2}'=(x^2)_{1}$. Conversely, it is easy to see that if $x^2=y+y'$ where $y$ and $y'$ have rank $1$ and are adjoint to each other for $(\cdot,\cdot)$, then the images of $y$ and $y'$ satisfy the defining property of $L_1$ and $L_{2}$, and therefore $y$ and $y'$ must equal $(x^2)_1$ and $(x^2)_{2}$ in some order.
 
Now the assumption $x^3=0$ means that $x(U)=0$, from which we deduce that $x(x^2)_{2}=x(x^2)_1=0$, and hence (by taking adjoints) $(x^2)_1x=(x^2)_{2}x=0$. Moreover, $U\subset U^\perp$, so $(x^2)_1(x^2)_{2}=(x^2)_{2}(x^2)_{1}=0$. We conclude that 
\[
\begin{split}
(1+xt^{-1}+(x^2)_{1}t^{-2})(1-xt^{-1}+(x^2)_{2}t^{-2})&=1,\\
(1-xt^{-1}+(x^2)_{2}t^{-2})(1+xt^{-1}+(x^2)_{1}t^{-2})&=1.
\end{split}
\]
Since the inverse of $1+xt^{-1}+(x^2)_{1}t^{-2}$ equals its adjoint, it belongs to $\fG$, as does $1+xt^{-1}+(x^2)_{2}t^{-2}=\iota(1+xt^{-1}+(x^2)_{1}t^{-2})$. Taking into account \eqref{eqn:rank-1}, we see that $(1+xt^{-1}+(x^2)_{1}t^{-2})\cdot\bo,(1+xt^{-1}+(x^2)_{2}t^{-2})\cdot\bo\in\cM'''$ as claimed. 

Finally, we must show that every element of $\cM'''$ is obtained in this way. By \eqref{eqn:rank-1}, any element of $\cM'''$ has the form $g\cdot\bo$ where $g=1+xt^{-1}+yt^{-2}\in\fG$ is such that $\rk(y)=1$ and $g\cdot\bo$ is not fixed by $\iota$. From Lemma \ref{lem:iota-dual}, we know that $\iota(g\cdot\bo)$ also belongs to $\cM'''$, so we must have $g^{-1}=1-xt^{-1}+y't^{-2}$ where $\rk(y')=1$ and $y'\neq y$. The equations
\[
(1+xt^{-1}+yt^{-2})(1-xt^{-1}+y't^{-2})=1=(1-xt^{-1}+y't^{-2})(1+xt^{-1}+yt^{-2})
\]
imply that 
\[ y+y'=x^2,\ yx=xy',\ xy=y'x,\ yy'=y'y=0. \]
Since $g\in\fG$, we know that $y$ and $y'$ are adjoint to each other for $(\cdot,\cdot)$. If $L$ and $L'$ denote the $1$-dimensional images of $y$ and $y'$ respectively, then $\ker(y)=(L')^\perp$ and $\ker(y')=L^\perp$. Moreover, $L\subset L^\perp$ because $y'y=0$, and similarly $L'\subset(L')^\perp$. It is not possible that $L=L'$, because that would force $y'=-y$, which would lead to the contradictory conclusions $y\in\fg$, $y^2=0$, and $\rk(y)=1$. So $U=L+L'$ is $2$-dimensional, and equals the image of $y+y'=x^2$. Since $x^2$ is self-adjoint, $\ker(x^2)=U^\perp$.

All that remains is to show that $x^3=0$. Knowing that $\rk(x^2)=2$, it suffices to show that $x^4=0$, because there are no elements of $\cN$ with a single Jordan block of size $4$. So we need only show that $U\subset U^\perp$. If $U\not\subset U^\perp$, then the restriction of $(\cdot,\cdot)$ to $U$ is nondegenerate, and $L$ and $L'$ are the two isotropic lines for that restriction. But from the equations $yx=xy'$ and $xy=y'x$ we see that $x(L)\subseteq L'$, $x(L')\subseteq L$. Since $x$ is anti-self-adjoint, this forces the restriction of $x$ to $U$ to be zero, giving the contradictory conclusion that $U\subset U^\perp$ after all.
\end{proof}

As an immediate consequence, $\cNsm=\{x\in\cN\,|\,x^3=0,\rk(x^2)\leq 2\}$. It is well known that the $G$-orbits in $\cN$ are parametrized by their Jordan types, which are the partitions of $2n+1$ in which every even part has even multiplicity. The orbits belonging to $\cNsm$ are as follows:
\begin{equation} \label{eqn:B-orbitlist}
\begin{split}
&[2^{2j}1^{2n-4j+1}],\text{ for }0\leq j\leq \textstyle\lfloor\frac{n}{2}\rfloor,\\
&[32^{2j}1^{2n-4j-2}],\text{ for }0\leq j\leq \textstyle\lfloor\frac{n-1}{2}\rfloor,\\
&[3^22^{2j}1^{2n-4j-5}],\text{ for }0\leq j\leq \textstyle\lfloor\frac{n-3}{2}\rfloor.
\end{split}
\end{equation}
Note that $\cNsm$ is the closure of the orbit $[3^2 2^{n-3}1]$ if $n$ is odd or $[3^22^{n-4}1^3]$ if $n$ is even. 

In this type we see some nontrivial Reeder pieces for the first time.
\begin{prop} \label{prop:B-orbits}
We have:
\begin{enumerate}
\item For all $0\leq j\leq \lfloor\frac{n}{2}\rfloor$, $\cM_{(1^{2j}0^{n-2j})}$ is a single $G$-orbit which $\pi$ maps isomorphically onto $[2^{2j}1^{2n-4j+1}]$.
\item $\cM_{(20^{n-1})}$ is a single $G$-orbit which $\pi$ maps isomorphically onto $[31^{2n-2}]$.
\item For all $1\leq j\leq \lfloor\frac{n-1}{2}\rfloor$, $\cM_{(21^{2j}0^{n-2j-1})}$ is the union of two $G$-orbits. One of them is $\cM_{(21^{2j}0^{n-2j-1})}^\iota$, and $\pi$ maps this isomorphically onto $[32^{2j}1^{2n-4j-2}]$. The other is mapped onto $[3^22^{2j-2}1^{2n-4j-1}]$ in a $2$-fold \'etale cover. In particular, the corresponding Reeder piece is the union of the two orbits $[32^{2j}1^{2n-4j-2}]$ and $[3^22^{2j-2}1^{2n-4j-1}]$.
\end{enumerate}
\end{prop}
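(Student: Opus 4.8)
The proof combines the explicit descriptions of $\cM'$, $\cM''$, $\cM'''$ in Proposition~\ref{prop:B-M} with the rank statistics of Lemma~\ref{lem:laurent}; throughout I use the formula $\pi\bigl((1+xt^{-1}+yt^{-2}+\cdots)\cdot\bo\bigr)=x$. The basic device is to compute, for a point $g\cdot\bo\in\cM_\lambdav$ with $x=\pi(g\cdot\bo)$, the Jordan type of $x$ directly from $\lambdav$ by applying Lemma~\ref{lem:laurent} after the embedding $Spin_{2n+1}\hookrightarrow SL_{2n+1}$, under which $(a_1,\dots,a_n)$ becomes $(a_1,\dots,a_n,0,-a_n,\dots,-a_1)$. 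Since $x^3=0$ and $\rk(x^2)\le 2$ on $\cM$, the Jordan type of $x$ has one of the shapes $[2^{2j}1^{\cdots}]$, $[32^{2k}1^{\cdots}]$, $[3^22^{2m}1^{\cdots}]$. Applying Lemma~\ref{lem:laurent}(1)--(2) to the leading coefficient of $g$ shows that the corresponding $Spin_{2n+1}$-coweight has no entry equal to $2$ when $x^2=0$ and exactly one such entry when $\rk(x^2)\ge 1$, while Lemma~\ref{lem:laurent}(3) with $s=2$ shows that its number of entries equal to $1$ is $\rk(x)$ when $x^2=0$ and $\rk(x)-2$ when $\rk(x^2)\ge 1$. (Alternatively: since $\pi$ is $G$-equivariant and continuous it preserves closure orders, and within each family both the coweights and the nilpotent orbits form chains of equal length, so monotonicity alone forces the correspondence below.)

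Running this through the three Jordan shapes gives: for $x^2=0$, the type $[2^{2j}1^{2n-4j+1}]$ is matched to $(1^{2j}0^{n-2j})$; for $\rk(x^2)=1$, the type $[32^{2k}1^{2n-4k-2}]$ is matched to $(21^{2k}0^{n-2k-1})$ (so $[31^{2n-2}]$ to $(20^{n-1})$); and for $\rk(x^2)=2$, the type $[3^22^{2m}1^{2n-4m-5}]$ is matched to $(21^{2m+2}0^{n-2m-3})$. Consequently $\pi$ maps $\cM_{(1^{2j}0^{n-2j})}$ into $[2^{2j}1^{2n-4j+1}]$, and, $\pi$ being injective on $\cM'=\{(1+xt^{-1})\cdot\bo:x\in\cN,\ x^2=0\}$, isomorphically onto it; likewise $\cM_{(20^{n-1})}$, being contained in $\cM''$ where $\pi$ is again injective, is carried isomorphically onto $[31^{2n-2}]$. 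For $j\ge 1$, the same analysis shows that $\cM_{(21^{2j}0^{n-2j-1})}$ meets $\cM''$ exactly in $\cM_{(21^{2j}0^{n-2j-1})}^\iota$, which $\pi$ carries isomorphically onto $[32^{2j}1^{2n-4j-2}]$, and meets $\cM'''$ exactly in the (nonempty) preimage of the orbit $C_1=[3^22^{2j-2}1^{2n-4j-1}]$.

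It then remains to treat $C_1$. By Proposition~\ref{prop:B-M}(3) the set $\pi^{-1}(C_1)$ consists of the pairs $(x,(x^2)_i)$ with $x\in C_1$; the two sheets $(x^2)_1\ne(x^2)_2$ depend algebraically on $x$ and are never equal, so $\pi^{-1}(C_1)\to C_1$ is an étale double cover, with $\iota$ acting as the nontrivial deck transformation and $\pi^{-1}(C_1)$ disjoint from the closed locus $\cM_{(21^{2j}0^{n-2j-1})}^\iota$. Because $\cM_{(21^{2j}0^{n-2j-1})}$ is irreducible and $C_1$ is its dense open image, $\pi^{-1}(C_1)$ is irreducible, hence connected, hence (as a $G$-equivariant connected étale cover of the homogeneous space $C_1$) a single $G$-orbit. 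Thus $\cM_{(21^{2j}0^{n-2j-1})}$ is the union of exactly the two $G$-orbits $\cM_{(21^{2j}0^{n-2j-1})}^\iota$ and $\pi^{-1}(C_1)$, with $C_2=[32^{2j}1^{2n-4j-2}]\subset\overline{C_1}$ (a direct check in the dominance order on partitions), which is the assertion of part~(3). The one place requiring genuine computation is the identification of the matched coweight via Lemma~\ref{lem:laurent}; everything else is bookkeeping with $G$-orbits together with the structural input of Proposition~\ref{prop:B-M}, so I expect that rank computation (and its organization across the three Jordan shapes) to be the main point of the argument.
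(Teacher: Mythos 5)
Your proof is correct and follows essentially the same route as the paper: both arguments combine the explicit parametrizations of $\cM'$, $\cM''$, $\cM'''$ in Proposition~\ref{prop:B-M} with the rank statistics from Lemma~\ref{lem:laurent} (applied through the embedding into $SL_{2n+1}$) to match coweights with Jordan types, and then obtain the \'etale double cover over the large orbit as a $G$-equivariant connected cover of a homogeneous space. One small expository caveat: when you invoke Lemma~\ref{lem:laurent}(3) with $s=2$ to read off ``the number of entries equal to $1$,'' what it literally computes is $\rk\bigl[\begin{smallmatrix}y & x\\0 & y\end{smallmatrix}\bigr]$, and translating this into $\rk(x)$ (hence into a coweight-entry count) uses the identities $\rk\bigl[\begin{smallmatrix}\frac{1}{2}x^2 & x\\0 & \frac{1}{2}x^2\end{smallmatrix}\bigr]=\rk(x)$ and $\rk\bigl[\begin{smallmatrix}(x^2)_1 & x\\0 & (x^2)_1\end{smallmatrix}\bigr]=\rk(x)$ under the constraints of Proposition~\ref{prop:B-M}, which deserve a sentence of justification (the paper states the resulting rank conditions on $x$ without elaborating either). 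Your parenthetical monotonicity alternative is a nice extra observation, though you'd still need the explicit computations to identify which orbit is which within each chain.
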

\begin{proof}
Part (1) follows from Proposition \ref{prop:B-M}(1) and Lemma \ref{lem:laurent}(2). Now if $g\in\fG$ is such that $g\cdot\bo\in\Gr_{(21^{2j}0^{n-2j-1})}$, then as a point in the affine Grassmannian of $SL_{2n+1}$, $g\cdot\bo$ belongs to the orbit labelled by $(2,1,\ldots,1,0,\ldots,0,-1,\ldots,-1,-2)$ where there are $2j$ ones and $2n-4j-1$ zeroes. Lemma \ref{lem:laurent} tells us that $g=1+xt^{-1}+yt^{-2}$ where $\rk(y)=1$ and $\rk[\begin{smallmatrix}y&x\\0&y\end{smallmatrix}]=2j+2$. Of course, $x$ and $y$ are also constrained by Proposition \ref{prop:B-M}. If $g\cdot\bo$ is fixed by $\iota$, then by Proposition \ref{prop:B-M}(2), the rank conditions become $\rk(x^2)=1$ and $\rk(x)=2j+2$, equivalent to $x\in[32^{2j}1^{2n-4j-2}]$. If $g\cdot\bo$ is not fixed by $\iota$, then by Proposition \ref{prop:B-M}(3), the rank conditions become $\rk(x^2)=2$ and $\rk(x)=2j+2$, equivalent to $x\in[3^22^{2j-2}1^{2n-4j-1}]$ (in particular, this case is possible only when $j\geq 1$). It is then clear from Proposition \ref{prop:B-M}(2) that $\pi$ maps $\cM_{(21^{2j}0^{n-2j-1})}^\iota$ isomorphically onto $[32^{2j}1^{2n-4j-2}]$, and from Proposition \ref{prop:B-M}(3) that $\pi$ gives a $2$-fold \'etale covering map from $\cM_{(21^{2j}0^{n-2j-1})}\smallsetminus \cM_{(21^{2j}0^{n-2j-1})}^\iota$ to $[3^22^{2j-2}1^{2n-4j-1}]$. Since the domain of this covering map is open in $\cM_{(21^{2j}0^{n-2j-1})}$, it is irreducible and must be a single $G$-orbit.
\end{proof}

\begin{proof}[Proof of Proposition~\ref{prop:general} in type $B$]
The statements about the map $\pi$ are contained in Proposition \ref{prop:B-orbits}. All that remains are the statements about the Springer correspondence, but these were already checked by Reeder in \cite[Table 5.1]{reeder3}.
\end{proof}

\subsection{Type $D$}
\label{subsect:d}

In this subsection, let $G=Spin_{2n}$ for some integer $n\geq 4$. We make the usual identifications 
\[
\begin{split}
\Lambdav&=\{(a_1,\ldots,a_n)\in\Z^n\,|\,a_1+\cdots+a_n\in 2\Z\},\\
\Lambdav^+&=\{(a_1,\ldots,a_n)\in\Lambdav\,|\,a_1\geq \cdots\geq a_{n-1}\geq |a_n|\}.
\end{split}
\]
Under the map $G\to SL_{2n}$, with suitable choices of maximal tori and positive systems, $(a_1,\ldots,a_n)\in\Lambdav^+$ maps to the coweight $(a_1,\ldots,a_n,-a_n,\ldots,-a_1)$ for $SL_{2n}$. If $a_n<0$, the latter coweight is not dominant, but the dominant coweight in its Weyl group orbit is obtained simply by swapping the coordinates $a_n$ and $-a_n$.

\begin{lem} \label{lem:D-small}
We have 
\[
\begin{split}
\Lambdavsm^+&\textstyle=\{(21^{2j}0^{n-2j-1})\,|\,0\leq j\leq\lfloor\frac{n-1}{2}\rfloor\}\cup\{(1^{2j}0^{n-2j})\,|\,0\leq j\leq\lfloor\frac{n}{2}\rfloor\}\\
&\qquad\qquad\cup
\begin{cases}\{(1^{n-1}(-1))\},&\text{ if $n$ is even,}\\
\{(21^{n-2}(-1))\},&\text{ if $n$ is odd.}\end{cases}
\end{split}
\]
If $n$ is even, $\cM$ is irreducible, but if $n$ is odd, $\cM$ has two components which are interchanged by $\iota$.
\end{lem}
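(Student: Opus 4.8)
The plan is to prove the two assertions in turn: first the explicit list for $\Lambdavsm^+$, then the number of irreducible components of $\cM$.

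\emph{The description of $\Lambdavsm^+$.} Since $D_n$ is simply laced, $\alphav_0$ is the highest root $e_1+e_2$, so $2\alphav_0=(2,2,0^{n-2})$, and $\lambdav=(a_1,\dots,a_n)\in\Lambdav^+$ fails to be small exactly when $\lambdav\geq(2,2,0^{n-2})$. The key step is to show that, for $\lambdav\in\Lambdav^+$, this happens if and only if $a_1\geq 2$ and $a_1+a_2\geq 4$. For the ``only if'' direction I would use that $\lambdav-(2,2,0^{n-2})$ is a nonnegative integer combination of positive roots of $\Gv$, together with the observation that every positive root of $D_n$ has nonnegative $e_1$-coordinate and nonnegative sum of its first two coordinates; hence neither $a_1$ nor $a_1+a_2$ can fall below its value at $(2,2,0^{n-2})$. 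For the ``if'' direction I would invoke the majorization description of the dominance order on $D_n$: given $a_1\geq 2$, $a_1+a_2\geq 4$ and $a_3\geq\cdots\geq a_{n-1}\geq|a_n|\geq 0$, one gets $\sum_{i=1}^k a_i\geq a_1+a_2\geq 4$ for $k\geq 2$ and $\sum_{i=1}^{n-1}a_i\pm a_n\geq a_1+a_2\geq 4$ (using $a_{n-1}\pm a_n\geq 0$), while the parity condition $\sum a_i\in 2\Z$ is automatic, so all the inequalities defining $\lambdav\geq(2,2,0^{n-2})$ hold. Thus $\Lambdavsm^+=\{\lambdav\in\Lambdav^+ : a_1\leq 2,\ a_1+a_2\leq 3\}$, and a short case analysis on $a_1\in\{0,1,2\}$, tracking $\sum a_i\in 2\Z$, yields exactly the stated list: the coweights $(1^n)$ and $(21^{n-1})$ are absorbed into the two displayed chains, and the ``extra'' coweight with last coordinate $-1$ is $(1^{n-1}(-1))$ or $(21^{n-2}(-1))$ according to which choice makes $\sum a_i$ even, i.e.\ according to the parity of $n$.

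\emph{The components of $\cM$.} I would first record the general principle that, since $\cM_\lambdav$ is open dense in the irreducible variety $\Gr_\lambdav$ and $\overline{\Gr_\lambdav}\cap\Grom=\bigcup_{\muv\leq\lambdav}\cM_\muv$, the closure of $\cM_\lambdav$ in $\cM$ equals $\bigcup_{\muv\leq\lambdav,\ \muv\in\Lambdavsm^+}\cM_\muv$; hence the irreducible components of $\cM$ are the closures of the $\cM_\lambdav$ for $\lambdav$ ranging over the maximal elements of $(\Lambdavsm^+,\leq)$. For $n$ even I would then check, by comparing partial sums, that every coweight on the list is $\leq(21^{n-2}0^1)$, so this is the unique maximal element and $\cM$ is irreducible. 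For $n$ odd I would show that $(21^{n-1})$ and $(21^{n-2}(-1))$ are the only maximal elements: they are incomparable, because their difference is $\pm 2e_n=\pm(\alphav_n-\alphav_{n-1})$ (standard labelling of simple roots), which is not a nonnegative combination of simple roots; every other listed coweight lies below one, in fact below both, of them by a partial-sum check; and smallness forces any coweight dominating $(21^{n-1})$ to have $a_1=2$, $a_2=\cdots=a_{n-1}=1$, hence $a_n=\pm1$, so neither of the two is strictly dominated. Finally, for $n$ odd $-w_0$ is the nontrivial diagram automorphism of $D_n$, which negates the last coordinate, so $-w_0(21^{n-1})=(21^{n-2}(-1))$; by Lemma~\ref{lem:iota-dual}, $\iota$ interchanges the corresponding $\GO$-orbits, hence the two components of $\cM$.

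The main obstacle is the bookkeeping with the dominance order in type $D_n$: unlike types $A$, $B$, $C$, the last coordinate $a_n$ may be negative and enters the majorization criterion through the two endpoint conditions $\sum_{i=1}^{n-1}a_i\pm a_n$, and the parity constraint $\sum a_i\in 2\Z$ must be tracked against the parity of $n$. Once these two points are handled with care, every individual comparison is elementary, and the structural ingredients — the correspondence between components of $\cM$ and maximal elements of $\Lambdavsm^+$, and Lemma~\ref{lem:iota-dual} for the $\iota$-action — are already available.
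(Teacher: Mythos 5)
Your proof is correct and follows essentially the same route as the paper's: reduce smallness to the inequalities $a_1\leq 2$, $a_1+a_2\leq 3$ via $\lambdav\not\geq 2\alphav_0$, enumerate the resulting lattice points with the parity constraint, identify the irreducible components of $\cM$ with the maximal elements of $(\Lambdavsm^+,\leq)$, and invoke Lemma~\ref{lem:iota-dual} to see that $\iota$ swaps the two maximal orbits when $n$ is odd. The paper simply cites the type-$B$ computation for the first half and describes the poset somewhat informally, whereas you spell out the $D_n$ majorization criterion and the components-versus-maximal-elements principle explicitly; that extra care is warranted here because $a_n$ may be negative. One small point worth smoothing: you state that failure of smallness is equivalent to $a_1\geq 2$ \emph{and} $a_1+a_2\geq 4$, and then assert $\Lambdavsm^+=\{a_1\leq 2,\ a_1+a_2\leq 3\}$; the negation of your characterization is a priori $a_1\leq 1$ \emph{or} $a_1+a_2\leq 3$, and the identification with the paper's conjunction uses the parity constraint (if $a_1=3$, $a_2=0$ then $\sum a_i$ is odd), so a phrase to that effect would close the loop.
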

\begin{proof}
The proof of the description of $\Lambdavsm^+$ is identical to that given in Lemma \ref{lem:B-small}.

Note that the involution $\lambdav\mapsto-w_0\lambdav$ is nontrivial only when $n$ is odd, in which case it fixes every element of $\Lambdavsm^+$ except for interchanging $(21^{n-1})$ and $(21^{n-2}(-1))$. When $n$ is even, the partial order on $\Lambdavsm^+$ is described in the same way as the type-$B$ case, except that $(1^n)$ is replaced by two incomparable elements, $(1^{n})$ and $(1^{n-1}(-1))$; in particular, $(21^{n-2}0)$ is the unique maximal element, so $\cM$ is irreducible. When $n$ is odd, the partial order on $\Lambdavsm^+$ is described in the same way as the type-$B$ case, except that the maximal element $(21^{n-1})$ is replaced by two incomparable elements, $(21^{n-1})$ and $(21^{n-2}(-1))$, so $\cM$ has two irreducible components which are interchanged by $\iota$.
\end{proof}

As in the type-$B$ case, let $\cM'$ denote $\bigcup_{j}\cM_{(1^{2j}0^{n-2j})}$, or the union of this and $\cM_{(1^{n-1}(-1))}$ if $n$ is even.
\begin{prop} \label{prop:D-M}
The statement of Proposition \ref{prop:B-M} holds verbatim here.
\end{prop}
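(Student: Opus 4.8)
The plan is to transcribe the proof of Proposition~\ref{prop:B-M} essentially verbatim, the only structural change being that the ambient classical group is now $SO_{2n}$ rather than $SO_{2n+1}$, so that $G\to SL_{2n}$ carries a dominant coweight $(a_1,\ldots,a_n)$ to $(a_1,\ldots,a_n,-a_n,\ldots,-a_1)$ --- the same shape as in type $B$ but with the central zero coordinate deleted. That proof draws on only three inputs, all available here without change: Lemma~\ref{lem:laurent}, which is a statement about $SL_N$ and is therefore indifferent to the type; the description in Proposition~\ref{prop:A-M} of $\cM_1$, $\cM_2$ and $\cM_1\cap\cM_2$ for $SL_{2n}$; and elementary linear algebra of a nondegenerate \emph{symmetric} bilinear form, which is identical for $SO_{2n}$ and for $SO_{2n+1}$. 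So I would copy the argument across and verify that the few points where the parity of the ambient dimension --- or the presence of the two ``spin-node'' small coweights, which have no counterpart in type $B$ --- could intervene cause no trouble.

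In detail: I would first handle part~(1) exactly as in the proof of Proposition~\ref{prop:C-M}. The small coweights $(1^{2j}0^{n-2j})$, and also $(1^{n-1}(-1))$ when $n$ is even, all map under $G\to SL_{2n}$ into $\Lambdav_{\sm,1}^+\cap\Lambdav_{\sm,2}^+$ for $SL_{2n}$; in fact $(1^{n-1}(-1))$ and $(1^n)$ have the \emph{same} image $(1^n,(-1)^n)$, so the extra coweight in the $n$-even case is automatically absorbed. Proposition~\ref{prop:A-M} then identifies $\cM'$ with $\{(1+xt^{-1})\cdot\bo \mid x\in\cN,\ x^2=0\}$, on which $\iota$ acts trivially. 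For the complement, given $g\in\fG$ the condition $g\cdot\bo\in\cM\setminus\cM'$ translates, via Lemma~\ref{lem:laurent}(1)--(2), into the statement that the associated $SL_{2n}$-coweight $(a_1,\ldots,a_{2n})$ satisfies $a_{2n}=-2$ and $a_{2n-1}>-2$, yielding $\cM''\cup\cM'''=\{(1+xt^{-1}+yt^{-2})\cdot\bo \mid 1+xt^{-1}+yt^{-2}\in\fG,\ \rk(y)=1\}$ exactly as in~\eqref{eqn:rank-1}. From here, the fixed-point computation for $\iota$, the construction of $(x^2)_1$ and $(x^2)_2$ from the induced symmetric form on $U=\mathrm{im}(x^2)$, the verification that $1+xt^{-1}+(x^2)_it^{-2}\in\fG$ and that $\iota$ interchanges the two resulting points, and the argument (first that $x^4=0$ via the analysis of $x$ acting on $U$, then that $\rk(x^2)=2$ together with $x^4=0$ forces $x^3=0$, using only that a $2$-dimensional space has exactly two isotropic lines and that $SO_{2n}$ has no nilpotent consisting of a single size-$4$ Jordan block --- a partition of $2n$ of the form $[4,\ldots]$ with remaining parts $\le 2$ has the even part $4$ with odd multiplicity) all go through word for word.

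The main obstacle, although I expect it to be bookkeeping rather than a genuine difficulty, is keeping track of the two spin-node small coweights, for which the $SL_{2n}$-coweight $(a_1,\ldots,a_n,-a_n,\ldots,-a_1)$ is not dominant (one passes to the dominant representative by swapping $a_n\leftrightarrow-a_n$, as in the paragraph before Lemma~\ref{lem:D-small}). For $n$ even this reduces to the absorption already noted. For $n$ odd we have $(21^{n-2}(-1))=-w_0\cdot(21^{n-1})$, so by Lemma~\ref{lem:iota-dual} the slice $\cM_{(21^{n-2}(-1))}$ is just $\iota(\cM_{(21^{n-1})})$; one then checks that the two points $(1+xt^{-1}+(x^2)_1t^{-2})\cdot\bo$ and $(1+xt^{-1}+(x^2)_2t^{-2})\cdot\bo$ now lie in $\Gr_{(21^{n-1})}\cap\Grom$ and $\Gr_{(21^{n-2}(-1))}\cap\Grom$ respectively. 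This remains consistent with part~(3), since that statement never asserts the two points lie in a common $\GO$-orbit. Once these coweights are dealt with, the constraints $\rk(y)=1$, $g\in\fG$, and $x^2=y+y'$ with $y,y'$ mutually adjoint of rank~$1$ produce, exactly as in type $B$, the decomposition $\cM=\cM'\sqcup\cM''\sqcup\cM'''$ with the asserted descriptions.
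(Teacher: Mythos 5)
Your proposal is correct and follows the paper's approach exactly: the paper's own proof of this proposition is the single sentence ``Identical to that of Proposition~\ref{prop:B-M},'' and what you have written is the careful verification that this transfer is indeed legitimate, including the one genuinely new bookkeeping point (the spin-node small coweights, which are absorbed when $n$ is even and paired by $\iota$ when $n$ is odd, causing no change to the set-theoretic description of $\cM'$, $\cM''$, $\cM'''$).
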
 
\begin{proof}
Identical to that of Proposition \ref{prop:B-M}.
\end{proof}

As an immediate consequence, $\cNsm=\{x\in\cN\,|\,x^3=0,\rk(x^2)\leq 2\}$. It is well known that the $G$-orbits in $\cN$ are parametrized by their Jordan types, which are the partitions of $2n$ in which every even part has even multiplicity, except that there are two orbits (forming a single $O_{2n}$-orbit) for every partition in which no odd parts occur. The list of orbits belonging to $\cNsm$ is as follows:
\begin{equation} \label{eqn:D-orbitlist}
\begin{split}
&[2^{2j}1^{2n-4j}],\text{ for }0\leq j\leq \textstyle\lfloor\frac{n}{2}\rfloor,\\
&\qquad\text{except that there are two orbits $[2^n]_{I}$ and $[2^n]_{II}$ when $n$ is even,}\\
&[32^{2j}1^{2n-4j-3}],\text{ for }0\leq j\leq \textstyle\lfloor\frac{n-2}{2}\rfloor,\\
&[3^22^{2j}1^{2n-4j-6}],\text{ for }0\leq j\leq \textstyle\lfloor\frac{n-3}{2}\rfloor.
\end{split}
\end{equation}
Note that $\cNsm$ is the closure of the orbit $[3^2 2^{n-4}1^2]$ if $n$ is even or $[3^22^{n-3}]$ if $n$ is odd.

\begin{prop} \label{prop:D-orbits}
We have:
\begin{enumerate}
\item For all $0\leq j\leq \lfloor\frac{n-1}{2}\rfloor$, $\cM_{(1^{2j}0^{n-2j})}$ is a single $G$-orbit which $\pi$ maps isomorphically onto $[2^{2j}1^{2n-4j}]$. If $n$ is even, $\cM_{(1^n)}$ and $\cM_{(1^{n-1}(-1))}$ are single $G$-orbits which $\pi$ maps isomorphically onto $[2^n]_{I}$ and $[2^n]_{II}$ in some order.
\item $\cM_{(20^{n-1})}$ is a single $G$-orbit which $\pi$ maps isomorphically onto $[31^{2n-3}]$.
\item For all $1\leq j\leq \lfloor\frac{n-2}{2}\rfloor$, $\cM_{(21^{2j}0^{n-2j-1})}$ is the union of two $G$-orbits. One of them is $\cM_{(21^{2j}0^{n-2j-1})}^\iota$, and $\pi$ maps this isomorphically onto $[32^{2j}1^{2n-4j-3}]$. The other is mapped onto $[3^22^{2j-2}1^{2n-4j-2}]$ in a $2$-fold \'etale cover. In particular, the corresponding Reeder piece is the union of the two orbits $[32^{2j}1^{2n-4j-3}]$ and $[3^22^{2j-2}1^{2n-4j-2}]$.
\item If $n$ is odd, $\cM_{(21^{n-1})}$ and $\cM_{(21^{n-2}(-1))}$ are single $G$-orbits, each of which $\pi$ maps isomorphically onto $[3^22^{n-3}]$.
\end{enumerate}
\end{prop}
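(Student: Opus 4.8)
The plan is to mirror the proof of Proposition~\ref{prop:B-orbits} essentially line by line, with Proposition~\ref{prop:D-M} playing the role of Proposition~\ref{prop:B-M} and the type-$D$ coweight dictionary replacing the type-$B$ one; the only genuinely new features are the splitting $[2^n]=[2^n]_I\sqcup[2^n]_{II}$ for $n$ even and the non-triviality of $\iota$ on $\Lambdavsm^+$ for $n$ odd, which are precisely the content of parts~(1) and~(4). Concretely, I would work through $G=Spin_{2n}\to SO_{2n}\subset SL_{2n}$, using $\fG\simto\fG'$ to write every $g\in\fG$ as an expression $1+xt^{-1}+yt^{-2}+\cdots$ satisfying the defining equations of $SO_{2n}$, with $\pip(g\cdot\bo)=x$. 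For $\lambdav=(a_1,\dots,a_n)\in\Lambdavsm^+$ the coweight of $SL_{2n}$ attached to $g\cdot\bo\in\Gr_\lambdav$ is the dominant conjugate of $(a_1,\dots,a_n,-a_n,\dots,-a_1)$, and Lemma~\ref{lem:laurent} converts $g\cdot\bo\in\Gr_\lambdav$ into rank conditions on $y$ and on $[\begin{smallmatrix}y&x\\0&y\end{smallmatrix}]$. Feeding these into the decomposition $\cM=\cM'\sqcup\cM''\sqcup\cM'''$ of Proposition~\ref{prop:D-M} pins down the Jordan type of $x$ in each case.

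For parts~(1)--(3) the arguments are then word-for-word those of Proposition~\ref{prop:B-orbits}. In part~(1), $\cM_{(1^{2j}0^{n-2j})}\subset\cM'$ corresponds under $\pi\colon(1+xt^{-1})\cdot\bo\mapsto x$ to $\{x\in\cN:x^2=0,\ \rk(x)=2j\}$ by Lemma~\ref{lem:laurent}(2); for $2j<n$ this is the single orbit $[2^{2j}1^{2n-4j}]$, so $\cM_\lambdav$ is a single $G$-orbit mapped isomorphically onto it. When $n$ is even and $2j=n$, the two distinct $\GO$-orbits $\Gr_{(1^n)}$ and $\Gr_{(1^{n-1}(-1))}$ both correspond to $\rk(x)=n$, i.e.\ to $[2^n]_I\sqcup[2^n]_{II}$; since $\pi|_{\cM'}$ is injective and $\cM_{(1^n)}$, $\cM_{(1^{n-1}(-1))}$ are disjoint and irreducible, they map isomorphically onto $[2^n]_I$ and $[2^n]_{II}$ in some order. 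In part~(2), $(20^{n-1})$ yields $\rk(y)=1$ and $\rk[\begin{smallmatrix}y&x\\0&y\end{smallmatrix}]=2$, and the $\cM'''$ alternative (which would need $\rk(x^2)=2$ while $\rk(x)=2$) is impossible in $\cN$, so $\cM_{(20^{n-1})}=\cM_{(20^{n-1})}^\iota\subset\cM''$ is $\{\exp(xt^{-1})\cdot\bo:x\in[31^{2n-3}]\}$, mapped isomorphically onto $[31^{2n-3}]$. In part~(3), for $1\le j\le\lfloor\frac{n-2}{2}\rfloor$ the coweight gives $\rk(y)=1$, $\rk[\begin{smallmatrix}y&x\\0&y\end{smallmatrix}]=2j+2$; on $\cM''$ the identity $\rk[\begin{smallmatrix}x^2&x\\0&x^2\end{smallmatrix}]=\rk(x)$ for $x^3=0$ turns this into $x\in[32^{2j}1^{2n-4j-3}]$, mapped isomorphically; on $\cM'''$, where $xy=yx=0$, the same identity (with $(x^2)_\epsilon$ in place of $x^2$) gives $x\in[3^22^{2j-2}1^{2n-4j-2}]$ (forcing $j\ge1$), $\iota$ acts freely by swapping $(x^2)_1\leftrightarrow(x^2)_2$, so $\pi$ is a degree-$2$ \'etale cover of this orbit, and its source, being open in $\cM_{(21^{2j}0^{n-2j-1})}$ and hence irreducible, is a single $G$-orbit. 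Thus the Reeder piece is $[32^{2j}1^{2n-4j-3}]\sqcup[3^22^{2j-2}1^{2n-4j-2}]$.

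Part~(4), for $n$ odd, is the genuinely new point. Here the dictionary sends \emph{both} $(21^{n-1})$ and $(21^{n-2}(-1))$ to the same $SL_{2n}$-coweight $(2,1^{n-1},(-1)^{n-1},-2)$, so Lemma~\ref{lem:laurent} gives $\rk(y)=1$ and $\rk[\begin{smallmatrix}y&x\\0&y\end{smallmatrix}]=n+1$ on both orbits. The $\cM''$ alternative would require $x\in\cN$ with $x^3=0$, $\rk(x^2)=1$ and $\rk(x)=n+1$, i.e.\ a partition of $2n$ with a part $3$ and $n-1$ parts $2$, which is too large; hence both $\cM_{(21^{n-1})}$ and $\cM_{(21^{n-2}(-1))}$ lie in $\cM'''$, with underlying nilpotent $x\in[3^22^{n-3}]$. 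By Lemma~\ref{lem:iota-dual}, $\iota$ exchanges the two $\GO$-orbits, while on $\cM'''$ it acts by $(x^2)_1\leftrightarrow(x^2)_2$; consequently each of $\cM_{(21^{n-1})}$, $\cM_{(21^{n-2}(-1))}$ contains exactly one of the two lifts $(1+xt^{-1}+(x^2)_\epsilon t^{-2})\cdot\bo$ over each such $x$, so $\pi$ restricts to a $G$-equivariant bijection onto $[3^22^{n-3}]$, hence an isomorphism; in particular each is a single $G$-orbit, and the associated Reeder piece is the single orbit $[3^22^{n-3}]$, matching case~(1) of Proposition~\ref{prop:general} with $\lambdav\ne-w_0\lambdav$.

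The main obstacle is organizational rather than conceptual: one must push the rank data of Lemma~\ref{lem:laurent} through the $SL_{2n}$-embedding and match it against the matrix normal forms of Proposition~\ref{prop:D-M}, keeping the two parity subcases straight --- the $[2^n]$ splitting for $n$ even (resolved by injectivity of $\pi|_{\cM'}$) and the non-triviality of $\iota$ for $n$ odd, which is exactly why the ``doubling'' inside one $\iota$-stable $\GO$-orbit seen in part~(3) degenerates in part~(4) into two $\iota$-exchanged orbits each mapping isomorphically. The only computational input, the identity $\rk[\begin{smallmatrix}x^2&x\\0&x^2\end{smallmatrix}]=\rk(x)$ for nilpotent $x$ with $x^3=0$ --- and its variant with $(x^2)_\epsilon$, which additionally uses $\ker(x)\subset\ker((x^2)_\epsilon)$ (itself a consequence of $\ker(x)\subset\ker(x^2)$ and the direct-sum decomposition) --- is verified by reduction to a single Jordan block of size at most three.
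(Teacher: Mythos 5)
Your proposal is correct and matches the paper's (unstated) intent: the paper's own proof of Proposition~\ref{prop:D-orbits} simply reads ``Similar to that of Proposition~\ref{prop:B-orbits},'' and your write-up --- pushing the rank data of Lemma~\ref{lem:laurent} through the $SL_{2n}$-embedding, matching it against the decomposition $\cM=\cM'\cup\cM''\cup\cM'''$ of Proposition~\ref{prop:D-M}, and handling the two parity subcases in parts~(1) and~(4) --- is exactly what that remark refers to. One small caveat on your closing paragraph: the $(x^2)_\epsilon$ variant of the rank identity is not really obtained by reducing to a single Jordan block of $x$, since the decomposition $x^2=(x^2)_1+(x^2)_2$ need not respect any Jordan basis of $x$, and the kernel inclusion you want is the stronger $\ker(x^2)\subset\ker((x^2)_\epsilon)$, not merely $\ker(x)\subset\ker((x^2)_\epsilon)$. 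With that correction a direct kernel count gives $\rk\bigl[\begin{smallmatrix}y&x\\0&y\end{smallmatrix}\bigr]=\rk(x)$ uniformly for any $y$ with $\mathrm{im}(y)\subset\mathrm{im}(x^2)=U$ and $\ker(x^2)=U^\perp\subset\ker(y)$: since $x$ kills $U$ one has $x^{-1}(\mathrm{im}\,y)\subset\ker(x^2)\subset\ker(y)$, so $\dim\ker\bigl[\begin{smallmatrix}y&x\\0&y\end{smallmatrix}\bigr]=\dim\ker(y)+\dim x^{-1}(\mathrm{im}\,y)=(2n-\rk y)+(\rk y+2n-\rk x)=4n-\rk(x)$, which covers both the $\cM''$ case ($y=\tfrac12 x^2$) and the $\cM'''$ case ($y=(x^2)_\epsilon$) at once.
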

\begin{proof}
Similar to that of Proposition \ref{prop:B-orbits}.
\end{proof}

\begin{proof}[Proof of Proposition~\ref{prop:general} in type $D$]
The statements about the map $\pi$ are contained in Proposition \ref{prop:D-orbits}. All that remains are the statements about the Springer correspondence, but these follow from the computations of $V_\lambdav^\Tv$ done by Reeder in \cite[Lemma 3.2]{reeder1} and the description of the Springer correspondence in \cite[Section 13.3]{carter}. To illustrate, let $\lambdav=(21^{2j}0^{n-2j-1})$ for $1\leq j\leq \lfloor\frac{n-2}{2}\rfloor$. Then as a representation of $PSO_{2n}$, $V_\lambdav$ is what Reeder calls $V_{2j+1}$, and \cite[Lemma 3.2]{reeder1} says that the representation of $W$ on $V_\lambdav^\Tv$ is the sum of the irreducibles labelled by the bipartitions $((n-j-1,1);(j))$ and $((n-j-1);(j,1))$. After tensoring with sign, these become the irreducibles labelled by $((21^{n-j-2});(1^j))$ and $((21^{j-1});(1^{n-j-1}))$. These do indeed correspond under the Springer correspondence to the trivial and non-trivial local systems on the orbit $[3^22^{2j-2}1^{2n-4j-2}]$.
\end{proof}

\section{The exceptional types}
\label{sect:exceptional}

\subsection{Types $E_6$, $E_7$, $E_8$}
\label{subsect:e}

The poset $\Lambdavsm^+$ for each of these types is displayed in Table~\ref{tab:excpo}. Our numbering of the Dynkin diagrams of type $E$ follows \cite[Plates V--VII]{bourbaki}. Recall that in type $E_6$, the involution $\lambdav\mapsto-w_0\lambdav$ interchanges $\omegav_1$ and $\omegav_6$, as well as $\omegav_3$ and $\omegav_5$. In types $E_7$ and $E_8$, $\lambdav=-w_0\lambdav$ for all $\lambdav\in\Lambdav$. Inspecting the poset $\Lambdavsm^+$, we see:
\begin{lem} \label{lem:E-Mirred}
In type $E_6$, $\cM$ has two irreducible components which are interchanged by $\iota$. In types $E_7$ and $E_8$, $\cM$ is irreducible. \qed
\end{lem}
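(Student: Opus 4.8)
The plan is to reduce the statement to a combinatorial inspection of the poset $\Lambdavsm^+$, via the general observation that the irreducible components of $\cM$ are indexed by the maximal elements of $\Lambdavsm^+$. First I would justify that observation. Each $\GO$-orbit $\Gr_\lambdav$ is irreducible, and by the remarks in Section~\ref{sect:not} the subset $\cM_\lambdav = \Gr_\lambdav \cap \Grom$ is open and dense in it, hence also irreducible; and $\cM = \bigcup_{\lambdav \in \Lambdavsm^+} \cM_\lambdav$ by definition. Since $\cM_\lambdav$ is dense in $\Gr_\lambdav$, its closure in $\Gr$ is $\overline{\Gr_\lambdav} = \bigsqcup_{\muv \leq \lambdav} \Gr_\muv$; intersecting with $\cM$, and using that $\Lambdavsm^+$ is a lower order ideal and that each $\Gr_\muv$ meets $\Grom$ densely, one gets that the closure of $\cM_\lambdav$ inside $\cM$ is $\bigsqcup_{\muv \leq \lambdav} \cM_\muv$. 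Hence $\cM = \bigcup_{\lambdav} \overline{\cM_\lambdav}$ and $\overline{\cM_\lambdav} \subseteq \overline{\cM_\nuv}$ if and only if $\lambdav \leq \nuv$, so the irreducible components of $\cM$ are precisely the $\overline{\cM_\lambdav}$ with $\lambdav$ maximal in $\Lambdavsm^+$; in particular $\cM$ is irreducible exactly when $\Lambdavsm^+$ has a unique maximal element.

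Next I would read off Table~\ref{tab:excpo}: in types $E_7$ and $E_8$ the poset $\Lambdavsm^+$ has a unique maximal element, so $\cM$ is irreducible, whereas in type $E_6$ it has exactly two maximal elements, say $\lambdav_1$ and $\lambdav_2$, and so $\cM$ has exactly two irreducible components $\overline{\cM_{\lambdav_1}}$ and $\overline{\cM_{\lambdav_2}}$. It then remains to check that $\iota$ exchanges them. By Lemma~\ref{lem:iota-dual}, $\iota$ sends $\cM_\lambdav$ to $\cM_{-w_0\lambdav}$, and $-w_0$ is an order-preserving involution of $\Lambdavsm^+$ (it preserves $\Lambdavsm^+$, as noted after Lemma~\ref{lem:iota-dual}), so it permutes $\{\lambdav_1,\lambdav_2\}$; thus it either fixes both or swaps them. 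To exclude the former it suffices to check that at least one $\lambdav_i$ is not fixed by $-w_0$, and here I would use the explicit form of the two maximal small coweights from Table~\ref{tab:excpo} together with the recalled fact that in type $E_6$ the involution $-w_0$ interchanges $\omegav_1 \leftrightarrow \omegav_6$ and $\omegav_3 \leftrightarrow \omegav_5$ (and fixes $\omegav_2,\omegav_4$): one simply observes that the coefficient pattern of $\lambdav_i$ is not symmetric under this swap.

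I expect no real obstacle: the argument is entirely bookkeeping. The only steps needing care are (i) determining the maximal elements of $\Lambdavsm^+$ in each of $E_6, E_7, E_8$ --- which rests on the smallness criterion $\lambdav \not\geq 2\alphav_0$ and on the dominance order in these root systems, and is exactly what Table~\ref{tab:excpo} encodes --- and (ii) in type $E_6$, verifying that the two maximal small coweights are genuinely interchanged by the diagram automorphism rather than individually fixed. Both are finite computations.
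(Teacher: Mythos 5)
Your proposal is correct and takes essentially the same route as the paper: the paper's proof consists of the remarks preceding the lemma (that $-w_0$ swaps $\omegav_1\leftrightarrow\omegav_6$ and $\omegav_3\leftrightarrow\omegav_5$ in $E_6$ and is trivial in $E_7,E_8$) together with ``inspecting the poset $\Lambdavsm^+$'' of Table~\ref{tab:excpo}, which is exactly the maximal-element analysis you carry out. You simply make explicit the general principle (used implicitly throughout the paper, e.g.\ in Lemmas~\ref{lem:A-comp},~\ref{lem:C-small},~\ref{lem:B-small},~\ref{lem:D-small}) that irreducible components of $\cM$ correspond to maximal elements of $\Lambdavsm^+$, and then combine Lemma~\ref{lem:iota-dual} with the action of $-w_0$ on the two maximal coweights $3\omegav_1,3\omegav_6$ of $E_6$.
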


We will study the map $\pi$ by means of a large simple subgroup $H \subset G$ of classical type for which the results of the previous section are available.  We define $H$ by specifying a connected sub-diagram of the extended Dynkin diagram of $G$, namely the one whose nodes have the following labels, with $0$ denoting the added node; we have listed the nodes in the order appropriate to the type of the sub-diagram.
\[
\begin{array}{c|c}
G & H \\
\hline
E_6 & 1,3,4,5,6 \text{ (type $A_5$)} \\
E_7 & 0,1,3,4,2,5 \text{ (type $D_6$)}\\
E_8 & 0,8,7,6,5,4,3,2 \text{ (type $D_8$)}
\end{array}
\]
This sub-diagram generates a subsystem $\Psi$ of the root system of $G$, and we let $H$ be the subgroup generated by the corresponding root subgroups.  The intersection $T\cap H$ is a maximal torus of $H$, and its cocharacter lattice $\Lambdav_H=\Q\check{\Psi}\cap\Lambdav$ is a sub-lattice of $\Lambdav$. Since $H$ is of classical type, we can write elements of $\Lambdav_H$ as tuples of integers as in Section~\ref{sect:classical}, and use the description of $\Lambdav_{H,\sm}^+$ given there. (Note that if $G$ is of type $E_8$, the coroot lattice $\Z\check{\Psi}$ of $H$ is an index-$2$ subgroup of $\Lambdav_H$, so $H$ is not simply connected; recall that 
$\Lambdav_{H,\sm}^+$ is still contained in the coroot lattice, by definition.) 

To begin, we carry out some computations according to the following procedure.  The results of these computations are recorded in Table~\ref{tab:Ecalc}.

\begin{enumerate}
\item \textit{For each $\lambdav\in\Lambdavsm^+$, compute $\dim\Gr_\lambdav$.} This is done using the formula
$\dim\Gr_\lambdav=\langle\lambdav,2\rho\rangle$ where $2\rho$ is the sum of the positive roots of $G$.

\item \textit{For each $\lambdav\in\Lambdavsm^+$, find the elements $\muv\in\Lambdav_{H,\sm}^+$ such that the $\HO$-orbit $\Gr_{H,\muv}$ is contained in $\Gr_\lambdav$.} For $\muv\in\Lambdav_H^+$, we have $\Gr_{H,\muv} \subset \Gr_\lambdav$ if and only if $\muv$, when regarded as an element of $\Lambdav$, lies in the $W$-orbit of $\lambdav$.  The determination of which $\muv\in\Lambdav_{H,\sm}^+$ have this property was carried out using the \textsf{LiE} software package~\cite{lie}.

\item \textit{For each such $\muv\in\Lambdav_{H,\sm}^+$, find the nilpotent orbits in $\cN_{H,\sm}$ 
contained in $\pi_H(\cM_{H,\muv})$.}  These orbits can be found by referring to Table~\ref{tab:classcalc}.

\item \textit{For each such nilpotent orbit in $\cN_{H,\sm}$, compute its $G$-saturation in $\cN$.}  Recall that in the Bala--Carter classification of nilpotent orbits, a nilpotent orbit is labelled by the smallest Levi subalgebra it meets.  In the classical types, the procedure for converting from a partition-type label to a Bala--Carter label is given in~\cite[\S 6]{bc2}.  When we do this for a nilpotent orbit $C \subset \cN_{H,\sm}$ found in the previous step, we notice that in each case, the Levi subalgebra of $\fh$ that arises shares its derived subalgebra with a Levi subalgebra of $\fg$.  Therefore, the $G$-saturation of $C$ is the orbit in $\cN$ which carries the same Bala--Carter label.  These labels are recorded in the last column, along with the dimension of the orbit as given in \cite[Section 13.1]{carter}.  (In some instances, two isomorphic but nonconjugate Levi subalgebras of $\fh$ may become conjugate under $G$.  For instance, in $D_8$, the nilpotent orbits $[3 2^4 1^5]$ and $[2^8]$ are labelled by two conjugate subalgebras that are both of type $4A_1$.)
\end{enumerate}

In view of Lemma~\ref{lem:commute}, we can conclude that each nilpotent orbit listed in the right-hand column of Table~\ref{tab:Ecalc} in the row corresponding to $\lambdav\in\Lambdavsm^+$ is contained in the image $\pi(\cM_\lambdav)$. We now aim to prove that this list of orbits is complete.

\begin{table}
\[\small
\begin{array}{@{}cc@{}}
E_6: &
\begin{array}{cc|c|c|cc}
\lambdav & \dim \Gr_\lambdav &
  \muv : \Gr_{H,\muv} \subset \Gr_\lambdav &
  \pi_H(\cM_{H,\muv}) &
  G \cdot \pi_H(\cM_{H,\muv}) & \dim  \\
\hline
3\omegav_1 & 48 &
   (1,1,1,1,-2,-2) & [3^2] & 2A_2 & 48 \\
\hline
3\omegav_6 & 48 &
   (2,2,-1,-1,-1,-1) & [3^2] & 2A_2 & 48 \\
\hline
\omegav_1 + \omegav_3 & 46 &
   (1,1,1,0,-1,-2) & [3 2 1] & A_2+A_1 & 46 \\
\hline
\omegav_5 + \omegav_6 & 46 &
   (2,1,0,-1,-1,-1) & [3 2 1] & A_2+A_1 & 46 \\
\hline
\omegav_4 & 42 &
   (1,1,0,0,0,-2) & [3 1^3] & A_2 & 42 \\
   \cline{3-6}
 &&(2,0,0,0,-1,-1) & [3 1^3] & A_2 & 42 \\
   \cline{3-6}
 &&(1,1,1,-1,-1,-1) & [2^3] & 3A_1 & 40 \\
\hline
\omegav_1 + \omegav_6 & 32 &
   (1,1,0,0,-1,-1) & [2^2 1^2] & 2A_1 & 32 \\
\hline
\omegav_2 & 22 &
   (1,0,0,0,0,-1) & [2 1^4] & A_1 & 22 \\
\hline
0 & 0 & 0 & [1^6] & 1 & 0
\end{array}
\\
\ \\ \ \\
E_7: &
\begin{array}{cc|c|c|cc}
\lambdav & \dim \Gr_\lambdav &
  \muv : \Gr_{H,\muv} \subset \Gr_\lambdav &
  \pi_H(\cM_{H,\muv}) &
  G \cdot \pi_H(\cM_{H,\muv}) & \dim  \\
\hline
\omegav_2+\omegav_7 & 76 &
   (2,1,1,1,1,0) & [3^2 2^2 1^2] & A_2+A_1 & 76 \\
   &&&             [3 2^4 1] & 4A_1 & 70 \\
\hline
\omegav_3 & 66 &
   (2,1,1,0,0,0) & [3^2 1^6] & A_2 & 66 \\
   &&&             [3 2^2 1^5] & (3A_1)' & 64 \\
   \cline{3-6}
 &&(1,1,1,1,1,1) & [2^6]_{I} & (3A_1)' & 64 \\
\hline
2\omegav_7 & 54 &
   (1,1,1,1,1,-1) & [2^6]_{II} & (3A_1)'' & 54 \\
\hline
\omegav_6 & 52 &
   (2,0,0,0,0,0) & [3 1^9] & 2A_1 & 52 \\
   \cline{3-6}
 &&(1,1,1,1,0,0) & [2^4 1^4] & 2A_1 & 52 \\
\hline
\omegav_1 & 34 &
   (1,1,0,0,0,0) & [2^2 1^8] & A_1 & 34 \\
\hline
0 & 0 & 0 & [1^{12}] & 1 & 0
\end{array}
\\
\ \\ \ \\
E_8: &
\begin{array}{cc|c|c|cc}
\lambdav & \dim \Gr_\lambdav &
  \muv : \Gr_{H,\muv} \subset \Gr_\lambdav &
  \pi_H(\cM_{H,\muv}) &
  G \cdot \pi_H(\cM_{H,\muv}) & \dim  \\
\hline
\omegav_2 & 136 &
   (2,1,1,1,1,0,0,0) & [3^2 2^2 1^6] & A_2+A_1 & 136 \\
   &&&                 [3 2^4 1^5] & 4A_1 & 128 \\
   \cline{3-6}
 &&(1,1,1,1,1,1,1,1) & [2^8] & 4A_1 & 128 \\
\hline
\omegav_7 & 114 &
   (2,1,1,0,0,0,0,0) & [3^2 1^{10}] & A_2 & 114 \\
   &&&                 [3 2^2 1^9] & 3A_1 & 112 \\
   \cline{3-6}
 &&(1,1,1,1,1,1,0,0) & [2^6 1^4] & 3A_1 & 112 \\
\hline
\omegav_1 & 92 &
   (2,0,0,0,0,0,0,0) & [3 1^{13}] & 2A_1 & 92 \\
   \cline{3-6}
 &&(1,1,1,1,0,0,0,0) & [2^4 1^8] & 2A_1 & 92 \\
\hline
\omegav_8 & 58 &
   (1,1,0,0,0,0,0,0) & [2^2 1^{12}] & A_1 & 58 \\
\hline
0 & 0 & 0 & [1^{16}] & 1 & 0
\end{array} \\
\ 
\end{array}
\]
\caption{Orbit calculations for types $E_6$, $E_7$, $E_8$.}\label{tab:Ecalc}
\end{table}

\begin{table}
\[\small
\begin{array}{c}
\multicolumn{1}{l}{E_6:} \\
\begin{array}{c|ccc|ccc|l}
 & V_{3\omegav_1} & & & V_{3\omegav_6} &
 & & \IC(2A_2) \\
\cline{1-2}\cline{4-5}\cline{7-8}
3\omegav_1 & 1 & & 3\omegav_6 & 1 &
 & 2A_2 & q^{12} \\
\cline{1-2}\cline{4-5}\cline{7-8}
\omegav_1 + \omegav_3 & 1 & & \omegav_5 + \omegav_6 & 1 &
  & A_2 + A_1 & q^{12} \\
\cline{1-2}\cline{4-5}\cline{7-8}
\omegav_4 & 1 & & \omegav_4 & 1 &
  & A_2 & 2q^{12} \\
 &&&&&& 3A_1 & q^{12} \\
\cline{1-2}\cline{4-5}\cline{7-8}
\omegav_1 + \omegav_6 & 4 & & \omegav_1 + \omegav_6 & 4 &
  & 2A_1 & q^{18} + q^{16} + q^{14} + q^{12} \\
\cline{1-2}\cline{4-5}\cline{7-8}
\omegav_2 & 10 & & \omegav_2 & 10 &
  & A_1 & 
  \tiny\begin{array}{@{}l@{}}
  q^{21} + q^{20} + q^{19} + 2q^{18} + q^{17} \\ {}+ q^{16} + q^{15} + q^{14} + q^{12}
  \end{array} \\
\cline{1-2}\cline{4-5}\cline{7-8}
0 & 24 & & 0 & 24 &
  & 1 & 
  \tiny\begin{array}{@{}l@{}}
  q^{30} + q^{28} + q^{27} + q^{26} + q^{25} + 3q^{24} + q^{23}\\{} + 2q^{22} + 2q^{21} + 2q^{20} + q^{19} + 3q^{18}\\{} + q^{17} + q^{16} + q^{15} + q^{14} + q^{12}
  \end{array}
\end{array}
\\
\ \\
\multicolumn{1}{l}{E_7:} \\
\begin{array}{c|ccc|ll}
  & V_{\omegav_2+\omegav_7} &
  & & \IC(A_2+A_1) & \IC(A_2+A_1,\sigma) \\
\cline{1-2}\cline{4-6}
\omegav_2+\omegav_7 & 1 &
  & A_2+A_1 & q^{25} & q^{25} \\
 &&& 4A_1 & q^{25} & \\
\cline{1-2}\cline{4-6}
\omegav_3 & 5 &
 & A_2 & 
  \tiny\begin{array}{@{}l@{}}
  q^{29}+q^{28}+q^{27}\\{}+q^{26}+q^{25}
  \end{array} &
  \tiny\begin{array}{@{}l@{}}
  q^{29}+q^{28}+q^{27}\\{}+q^{26}+q^{25}
  \end{array} \\
 &&& (3A_1)' & q^{29}+q^{27}+q^{25} & q^{28}+q^{26} \\
\cline{1-2}\cline{4-6}
2\omegav_7 & 6 &
 & (3A_1)'' & q^{35}+q^{31}+q^{29}+q^{25} & q^{32}+q^{28} \\
\cline{1-2}\cline{4-6}
\omegav_6 & 22 &
 & 2A_1 &
  \tiny\begin{array}{@{}l@{}}
  2q^{35}+2q^{33}+3q^{31}\\{}+3q^{29}+q^{27}+q^{25}
  \end{array} &
  \tiny\begin{array}{@{}l@{}}
  q^{36}+q^{34}+3q^{32}\\{}+2q^{30}+2q^{28}+q^{26}
  \end{array} \\
\cline{1-2}\cline{4-6}
\omegav_1 & 75 &
  & A_1 &
  \tiny\begin{array}{@{}l@{}}
  2q^{43}+3q^{41}+5q^{39}+7q^{37}\\{}+7q^{35}+6q^{33}+5q^{31}\\{}+3q^{29}+q^{27}+q^{25}
  \end{array} &
  \tiny\begin{array}{@{}l@{}}
  q^{44}+2q^{42}+4q^{40}+5q^{38}\\{}+6q^{36}+6q^{34}+5q^{32}\\{}+3q^{30}+2q^{28}+q^{26}
  \end{array} \\
\cline{1-2}\cline{4-6}
0 & 225 &
  & 1 &  
  \tiny\begin{array}{@{}l@{}}
q^{59}+q^{57}+3q^{55}+5q^{53}\\{}+6q^{51}+9q^{49}+11q^{47}\\{}+11q^{45}+13q^{43}+13q^{41}\\{}+11q^{39}+11q^{37}+9q^{35}\\{}+6q^{33}+5q^{31}+3q^{29}\\{}+q^{27}+q^{25}
  \end{array} &
  \tiny\begin{array}{@{}l@{}}
q^{58}+2q^{56}+3q^{54}+5q^{52}\\{}+7q^{50}+8q^{48}+10q^{46}\\{}+11q^{44}+11q^{42}+11q^{40}\\{}+10q^{38}+8q^{36}+7q^{34}\\{}+5q^{32}+3q^{30}\\{}+2q^{28}+q^{26}
  \end{array} 
\end{array}
\\
\ \\
\multicolumn{1}{l}{E_8:} \\
\begin{array}{@{}c|ccc|ll@{}}
  & V_{\omegav_2} &
  & & \IC(A_2+A_1) & \IC(A_2+A_1,\sigma) \\
\cline{1-2}\cline{4-6}
\omegav_2 & 1 &
  & A_2+A_1 & q^{52} & q^{52} \\
 &&& 4A_1 & q^{52} & \\
\cline{1-2}\cline{4-6}
\omegav_7 & 6 &
  & A_2 & 
  \tiny\begin{array}{@{}l@{}}
  q^{62}+q^{59}+q^{58}+q^{56}\\{}+q^{55}+q^{52}
  \end{array} &
  \tiny\begin{array}{@{}l@{}}
  q^{62}+q^{59}+q^{58}+q^{56}\\{}+q^{55}+q^{52}
  \end{array} \\
 &&& 3A_1 & q^{62}+q^{58}+q^{56}+q^{52} & q^{59}+q^{55} \\
\cline{1-2}\cline{4-6}
\omegav_1 & 29 &
  & 2A_1 &
  \tiny\begin{array}{@{}l@{}}
  q^{72}+q^{70}+2q^{68}+2q^{66}+3q^{64}\\{}+2q^{62}+2q^{60}+2q^{58}+q^{56}+q^{52}
  \end{array} &
  \tiny\begin{array}{@{}l@{}}
  q^{73}+q^{69}+2q^{67}+q^{65}+2q^{63}\\{}+2q^{61}+q^{59}+q^{57}+q^{55}
  \end{array} \\
\cline{1-2}\cline{4-6}
\omegav_8 & 111 &
  & A_1 &
  \tiny\begin{array}{@{}l@{}}
  q^{88}+q^{86}+2q^{84}+3q^{82}+4q^{80}\\{}+4q^{78}+6q^{76}+6q^{74}+6q^{72}+6q^{70}\\{}+6q^{68}+4q^{66}+5q^{64}+3q^{62}+2q^{60}\\{}+2q^{58}+q^{56}+q^{52}
  \end{array} &
  \tiny\begin{array}{@{}l@{}}
  q^{89}+2q^{85}+2q^{83}+2q^{81}+4q^{79}\\{}+4q^{77}+3q^{75}+6q^{73}+4q^{71}\\{}+4q^{69}+5q^{67}+3q^{65}+2q^{63}\\{}+3q^{61}+q^{59}+q^{57}+q^{55}
  \end{array} \\
\cline{1-2}\cline{4-6}
0 & 370 &
  & 1 &
  \tiny\begin{array}{@{}l@{}}
  q^{116}+q^{112}+2q^{110}+2q^{108}+3q^{106}\\{}+5q^{104}+4q^{102}+7q^{100}+8q^{98}\\{}+8q^{96}+10q^{94}+12q^{92}+10q^{90}\\{}+13q^{88}+13q^{86}+12q^{84}+13q^{82}\\{}+13q^{80}+10q^{78}+12q^{76}+10q^{74}\\{}+8q^{72}+8q^{70}+7q^{68}+4q^{66}\\{}+5q^{64}+3q^{62}+2q^{60}\\{}+2q^{58}+q^{56}+q^{52}
  \end{array} &
  \tiny\begin{array}{@{}l@{}}
  q^{113}+q^{111}+q^{109}+3q^{107}\\{}+2q^{105}+3q^{103}+6q^{101}+4q^{99}\\{}+6q^{97}+9q^{95}+6q^{93}+9q^{91}\\{}+11q^{89}+7q^{87}+11q^{85}+11q^{83}\\{}+7q^{81}+11q^{79}+9q^{77}+6q^{75}\\{}+9q^{73}+6q^{71}+4q^{69}+6q^{67}\\{}+3q^{65}+2q^{63}+3q^{61}\\{}+q^{59}+q^{57}+q^{55}
  \end{array} 
\end{array}
\\ \ 
\end{array}
\]
\caption{Calculations for the proof of Lemma~\ref{lem:E-stalkcalc}.}\label{tab:Estalk}
\end{table}

\begin{lem}\label{lem:E-openfibre}
Let $C \subset \cN$ be the unique maximal $G$-orbit appearing in Table~\ref{tab:Ecalc}, and let $D = \pi^{-1}(C) \subset \cM$.  Then $D$ is an open dense subset of $\cM$, and $\pi|_D: D \to C$ is an \'etale double cover.  In fact, in types $E_7$ and $E_8$, $D$ is isomorphic to the unique connected $G$-equivariant double cover of $C$, whereas in type $E_6$, $D$ is isomorphic to the trivial double cover $\Z/2\Z \times C$.
\end{lem}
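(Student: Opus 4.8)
The plan is to treat the three types $E_6,E_7,E_8$ in parallel, extracting from Table~\ref{tab:Ecalc} a classical subgroup $H$ and a coweight adapted to the top of $\Lambdavsm^+$, and then combining the results of Section~\ref{sect:classical} with the known component group of $C$. First I would fix a maximal element $\lambdav_0\in\Lambdavsm^+$ (unique in types $E_7,E_8$; either of $3\omegav_1,3\omegav_6$ in type $E_6$). Since $\cM_\nuv=\Gr_\nuv\cap\Grom$ is dense in $\Gr_\nuv$ for every $\nuv\in\Lambdavsm^+$, we have $\dim\cM=\max_\nuv\dim\Gr_\nuv=\dim\Gr_{\lambdav_0}$, which equals $\dim C$ by the table. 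In the row of $\lambdav_0$, Table~\ref{tab:Ecalc} records a $\muv_0\in\Lambdav_{H,\sm}^+$ with $\Gr_{H,\muv_0}\subseteq\Gr_{\lambdav_0}$ whose image $\pi_H(\cM_{H,\muv_0})$ has a unique open $H$-orbit $C^H$ satisfying $G\cdot C^H=C$. By Section~\ref{sect:classical}, the preimage $Z$ of $C^H$ in $\cM_{H,\muv_0}$ is a single $H$-orbit, hence irreducible, and $\pi_H|_Z\colon Z\to C^H$ is an isomorphism in type $E_6$ and a $2$-fold \'etale cover in types $E_7,E_8$. Via the closed embedding $\Gr_H\hookrightarrow\Gr$ we have $Z\subseteq\cM_{H,\muv_0}\subseteq\cM$, and by Lemma~\ref{lem:commute} the restriction $\pi|_Z$ equals $\pi_H|_Z$ followed by $\fh\hookrightarrow\fg$; since $C^H\subseteq C$, this shows $Z\subseteq D$.

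Next I would show that $D$ is open dense in $\cM$ and is the union of at most two $G$-orbits, each finite \'etale over $C$. The subvariety $D$ is $G$-stable (as $\pi$ is $G$-equivariant and $C$ is a $G$-orbit) and $\iota$-stable (by Lemma~\ref{lem:iota-pi}). The irreducible set $G\cdot Z\subseteq D$ satisfies $\pi(G\cdot Z)=G\cdot\pi(Z)=G\cdot C^H=C$, so $\dim(G\cdot Z)\ge\dim C=\dim\cM$, whence $\overline{G\cdot Z}$ is an irreducible component of $\cM$; since $D$ is $\iota$-stable and $\iota$ permutes the components of $\cM$ transitively (Lemma~\ref{lem:E-Mirred}), $D$ is dense in $\cM$. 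If $O\subseteq D$ is any $G$-orbit, the $G$-equivariant map $O\to C$ is surjective, so $\dim O\ge\dim C=\dim\cM$, forcing $\dim O=\dim\cM$; thus $O$ is open and dense in its closure, which is a component of $\cM$. As $\cM$ has at most two components, $D$ is exactly the union of their generic $G$-orbits, in particular open in $\cM$, and each $\pi|_O\colon O\to C$ is a surjective $G$-equivariant morphism of homogeneous spaces of equal dimension, hence finite \'etale.

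Finally, the main obstacle is to compute the degree $d$ of $\pi|_O$ for the orbit $O\supseteq Z$. I would choose $p\in Z$, put $e:=\pi(p)\in C^H\subseteq C$ and $L:=Z_G(e)$, so $O=G/G_p$ with $G_p\subseteq L$; since $\dim G_p=\dim G-\dim\cM=\dim G-\dim C=\dim L$ we get $G_p^\circ=L^\circ$, so $d=[L:G_p]$ divides $|A(C)|:=|L/L^\circ|$. In type $E_6$, the orbit $C=2A_2$ has trivial component group (see \cite{carter}), so $d=1$, i.e.\ $\pi|_O\colon O\simto C$; the remaining orbit of $D$ is $\iota(O)$, distinct from $O$ because $\iota$ exchanges the two components of $\cM$, and $\pi\circ\iota=\pi$ makes it isomorphic to $C$ as well, so $D=O\sqcup\iota(O)\cong C\sqcup C\cong\Z/2\Z\times C$ with $\iota$ interchanging the factors. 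In types $E_7,E_8$, the orbit $C=A_2+A_1$ has component group $\Z/2\Z$ (see \cite{carter}); the $2$-fold cover $\pi_H|_Z$ supplies $h\in Z_H(e)$ with $h\cdot p\ne p$, so $h\in L\setminus G_p$ and $d\ge2$, while $d\mid|A(C)|=2$, hence $d=2$. Here $\cM$ is irreducible, so $D=O$ is a single $G$-orbit, and $\pi|_D$ is a connected $G$-equivariant \'etale double cover of $C$; since these are classified by the index-$2$ subgroups of $A(C)=\Z/2\Z$, of which there is exactly one, $D$ is the unique connected such cover. The delicate point throughout is this degree calculation, which needs both the $2$-fold sub-cover coming from the classical subgroup $H$ and the component groups of the relevant nilpotent orbits; the latter is the essential external input.
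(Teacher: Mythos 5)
Your proof is correct and follows essentially the same approach as the paper: both establish $\dim\cM=\dim C$ and deduce that each $G$-orbit in $D$ is finite \'etale over $C$, use the fibres of $\pi_H$ over $C\cap\cN_{H,\sm}$ to force the covering degree to be at least $2$ in types $E_7,E_8$, and use the component group $A(C)$ of the open orbit (equivalently its $G$-equivariant fundamental group) for the upper bound on the degree and the uniqueness of the connected double cover. Your version just makes the degree computation explicit via stabilizer indices $[L:G_p]$ rather than citing the classification of $G$-equivariant covers, but the inputs are identical.
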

\begin{proof}
We see by inspection that in each case $\dim C = \dim \cM$, so we must have $\dim C = \dim D$.  
Since $\pi$ is $G$-equivariant, it follows immediately that $\pi|_D: D\to C$ is finite and \'etale.  In types $E_7$ and $E_8$, $\cM$ is irreducible, so $D$ must be a connected dense open subset of $\cM$.  Note that for a point $x \in C \cap \cN_{H,\sm}$, the fibre $\pi_H^{-1}(x)$ has two points.  So for general $x\in C$, the fibre $\pi^{-1}(x)$ must have at least two points.  But in both these types, the $G$-equivariant fundamental group of $C$ is $\Z/2\Z$ (see
\cite[Section 13.1]{carter}), so the fibres of a connected cover of $C$ can have at most two points.  We conclude that $D$ is the unique connected double cover of $C$ in these types.

Now suppose that $G$ is of type $E_6$. Since $D$ is $\iota$-stable, it must meet both irreducible components of $\cM$. It follows that $D$ is dense in $\cM$ and has two connected components.  Since the $G$-equivariant fundamental group of $C$ is trivial in this case, each connected component of $D$ must be isomorphic to $C$.
\end{proof}

\begin{lem}\label{lem:E-cnsm}
The image $\cNsm=\pi(\cM)$ is an irreducible closed subset of $\cN$, containing precisely the nilpotent orbits appearing in Table~\ref{tab:Ecalc}. In particular, $\cNsm\subset G\cdot\cN_{H,\sm}$.
\end{lem}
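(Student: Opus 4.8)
The idea is to pin down $\cNsm=\pi(\cM)$ completely by combining Lemma~\ref{lem:E-openfibre} with the finiteness criterion of Lemma~\ref{lem:finite-map}. Since neither Theorem~\ref{thm:mn} nor Proposition~\ref{prop:general} is available yet, closedness of $\cNsm$ must be argued directly. Let $C\subset\cN$ be the unique maximal $G$-orbit in Table~\ref{tab:Ecalc} and put $D=\pi^{-1}(C)\subset\cM$. First, $\cNsm$ is irreducible: this is clear when $\cM$ is irreducible, and in type $E_6$ we have $\cM=\cM_1\cup\cM_2$ with $\cM_2=\iota(\cM_1)$ for the two irreducible components (Lemma~\ref{lem:E-Mirred}), while $\pi\circ\iota=\pi$ by Lemma~\ref{lem:iota-pi}, so $\pi(\cM)=\pi(\cM_1)$ is the continuous image of an irreducible variety.

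\textbf{Step 1: $\cNsm=\overline C$.} By Lemma~\ref{lem:E-openfibre}, $D$ is dense in $\cM$, so continuity of $\pi$ gives $\pi(\cM)\subseteq\overline{\pi(D)}=\overline C$; since also $\pi(D)=C$, we have $C\subseteq\pi(\cM)\subseteq\overline C$, and $\pi$ defines a dominant morphism $\cM\to\overline C$ of affine varieties ($\cM$ is affine, and $\overline C\subseteq\cN\subseteq\fg$ is affine). Inside $\overline C$, the orbit $C$ is open with complement a union of nilpotent orbits of strictly smaller dimension, hence---all nilpotent orbits having even dimension---of codimension at least~$2$; its preimage $D$ is dense in $\cM$; and $\pi|_D\colon D\to C$ is finite, by Lemma~\ref{lem:E-openfibre}. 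Lemma~\ref{lem:finite-map} then shows $\pi\colon\cM\to\overline C$ is finite, so $\pi(\cM)$ is closed in $\overline C$; being dense, it is all of $\overline C$. Thus $\cNsm=\overline C$ is irreducible and closed in $\cN$.

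\textbf{Step 2: the orbits in $\cNsm$.} Each orbit in the last column of Table~\ref{tab:Ecalc} lies in some $\pi(\cM_\lambdav)\subseteq\cNsm$ by Lemma~\ref{lem:commute} (as explained just before the lemma), so all the listed orbits occur in $\cNsm=\overline C$. For the converse, one checks that $\overline C$ contains no further nilpotent orbit by consulting the closure order on nilpotent orbits in type $E$ (classically known; the orbit dimensions are in \cite[Section~13.1]{carter}). This gives the ``precisely'' assertion. Finally, each listed orbit is, by construction, the $G$-saturation of an orbit of the closed subvariety $\cN_{H,\sm}\subset\fh$; since $\cNsm=\overline C$ is the union of the listed orbits, it follows that $\cNsm\subseteq G\cdot\cN_{H,\sm}$.

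\textbf{Main obstacle.} The geometric core---irreducibility of the image, and its closedness via Lemma~\ref{lem:finite-map}---is short and uniform for $E_6,E_7,E_8$. The real content is Step~2's converse: after reducing to $\cNsm=\overline C$, one must verify case by case that $\overline C$ meets no nilpotent orbit outside Table~\ref{tab:Ecalc}. This is not a formal consequence of the set-up (a priori $G\cdot\cN_{H,\sm}$ may be strictly larger than $\overline C$), and it is exactly the kind of check against known closure data that the case-by-case approach of this paper relies on.
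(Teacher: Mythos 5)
Your proof is correct, and it takes a genuinely different route to closedness than the paper does. The paper's proof of this lemma is short: it shows $\pi(\cM)\subset\overline{C}$ from density of $D$, then consults the closure order to see that every orbit in $\overline{C}$ appears in Table~\ref{tab:Ecalc}, and finally observes (from the discussion preceding the lemma) that every orbit in the table is already known to lie in $\pi(\cM)$, yielding $\overline{C}\subset\pi(\cM)$ and hence equality. Your Step~1 instead applies Lemma~\ref{lem:finite-map} to the map $\pi\colon\cM\to\overline{C}$ to conclude $\pi$ is finite, hence $\pi(\cM)$ is closed in $\overline{C}$, hence equals $\overline{C}$ by density. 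This effectively proves Lemma~\ref{lem:pi-finite} (the next lemma in the paper) along the way and derives the identity $\pi(\cM)=\overline{C}$ from it, rather than from the orbit-by-orbit check. The reordering is sound: the hypotheses of Lemma~\ref{lem:finite-map} are all supplied by Lemma~\ref{lem:E-openfibre} plus the even-dimensionality of nilpotent orbits, independently of the present lemma. What the paper's route buys is that the equality $\pi(\cM)=\overline{C}$ and the ``precisely'' assertion come out of a single inspection of the closure order; your route decouples them, which is arguably cleaner but does not reduce the empirical content---you still need, as you say in your ``main obstacle,'' the case-by-case comparison of the orbits in $\overline{C}$ against Table~\ref{tab:Ecalc} for the ``precisely'' claim and the inclusion $\cNsm\subset G\cdot\cN_{H,\sm}$.
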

\begin{proof}
Let $C$ and $D$ be as in Lemma~\ref{lem:E-openfibre}.  Since $D$ is dense in $\cM$ and $\pi(D) = C$, it follows that $\pi(\cM) \subset \overline{C}$.  On the other hand, consulting \cite[Section 13.4]{carter} for the closure
order, we see that every $G$-orbit in $\overline{C}$ appears in Table~\ref{tab:Ecalc}, and so is contained in $\pi(\cM)$.  Thus, $\pi(\cM) = \overline{C}$.
\end{proof}

\begin{lem}\label{lem:pi-finite}
The map $\pi: \cM \to \cNsm$ is finite.
\end{lem}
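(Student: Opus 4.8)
The plan is to apply Lemma~\ref{lem:finite-map}, imitating the proof of Theorem~\ref{thm:mn}, but with the necessary inputs now supplied by Lemmas~\ref{lem:E-openfibre} and~\ref{lem:E-cnsm} rather than by Proposition~\ref{prop:general} (which we are still in the process of proving). First I would record the two affineness facts needed to invoke that lemma: $\cM$ is an affine variety, being a closed subvariety of the affine (ind-)variety $\Grom$ as noted in Section~\ref{sect:not}; and $\cNsm = \pi(\cM)$ is affine as well, since by Lemma~\ref{lem:E-cnsm} it coincides with $\overline{C}$, a closed subvariety of $\cN$. Hence $\pi: \cM \to \cNsm$ is a dominant morphism of affine varieties.

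Next I would take $U = C$, the unique open dense $G$-orbit in $\cNsm$ (the maximal orbit appearing in Table~\ref{tab:Ecalc}). Its complement $\cNsm \setminus C$ is a union of nilpotent orbits of strictly smaller dimension; since every nilpotent orbit has even dimension, this complement has codimension at least $2$ in $\cNsm$, exactly as in the proof of Theorem~\ref{thm:mn}. By Lemma~\ref{lem:E-openfibre}, $D = \pi^{-1}(C)$ is an open dense subset of $\cM$, and $\pi|_D: D \to C$ is an \'etale double cover, in particular a finite morphism. Thus all the hypotheses of Lemma~\ref{lem:finite-map} are satisfied with $f = \pi$, $Y = \cM$, $X = \cNsm$, and we conclude that $\pi: \cM \to \cNsm$ is finite.

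There is essentially no obstacle here: every ingredient has already been assembled in the preceding lemmas. The only points requiring a moment's care are the identification of $\cNsm$ with the closed (hence affine) set $\overline{C}$, which is precisely Lemma~\ref{lem:E-cnsm}, and the codimension bound, which rests on the standard fact that nilpotent orbits are even-dimensional.
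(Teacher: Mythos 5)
Your proof is correct and is essentially the same argument the paper intends: the paper's proof of Lemma~\ref{lem:pi-finite} simply cites Lemmas~\ref{lem:E-openfibre} and~\ref{lem:finite-map}, and the content you have written out (affineness of $\cM$ and $\cNsm=\overline{C}$, the codimension-$\geq 2$ bound from even-dimensionality of nilpotent orbits, density of $D=\pi^{-1}(C)$ in $\cM$, and finiteness of the \'etale double cover $\pi|_D$) is exactly the instantiation of Lemma~\ref{lem:finite-map} that the authors rehearse in the proof of Theorem~\ref{thm:mn}. The one small shortcut you could take is that the paper already records in Section~\ref{sect:not} that $\cM$ is affine, so there is no need to rederive this from properties of $\Grom$.
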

\begin{proof}
This follows from Lemma~\ref{lem:E-openfibre} and Lemma~\ref{lem:finite-map}.
\end{proof}

\begin{lem}\label{lem:E-stalkdim}
Let $\lambdav\in\Lambdavsm^+$.  For $\muv \in \Lambdav$, let $m_\lambdav^\muv$ denote the dimension of the $\muv$-weight space in $V_\lambdav$.  For $x \in \cNsm$, we have
\[
\sum_{i} \dim \cH_x^i(\pi_*\IC(\overline{\Gr_\lambdav})|_\cM)
= \sum_{\muv\in\Lambdavsm^+}
 | \pi^{-1}(x) \cap \Gr_\muv|\, m_\lambdav^\muv.
\]
\end{lem}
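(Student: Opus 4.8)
The plan is to reduce the left-hand side to a sum of \emph{local} intersection cohomology dimensions, one for each point of the fibre $\pi^{-1}(x)$, and then invoke the standard description of the stalks of $\Sat(V_\lambdav) = \IC(\overline{\Gr_\lambdav})$ in terms of weight multiplicities. First I would use the finiteness of $\pi: \cM \to \cNsm$ from Lemma~\ref{lem:pi-finite}. Being finite, $\pi$ is proper with finite fibres, so $\pi_* = \pi_!$ and, for any $\cF \in D^b_c(\cM)$ and any $x \in \cNsm$, there is an isomorphism
\[
(\pi_*\cF)_x \cong \bigoplus_{y \in \pi^{-1}(x)} \cF_y
\]
in $D^b_c(\mathrm{pt})$. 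Applying this to $\cF = \IC(\overline{\Gr_\lambdav})|_\cM$ and taking dimensions of cohomology sheaves gives
\[
\sum_i \dim \cH^i_x\bigl(\pi_*\IC(\overline{\Gr_\lambdav})|_\cM\bigr)
= \sum_{y \in \pi^{-1}(x)} \sum_i \dim \cH^i_y\bigl(\IC(\overline{\Gr_\lambdav})|_\cM\bigr).
\]
Since $\Lambdavsm^+$ is a lower order ideal we have $\overline{\Gr_\lambdav} \subseteq \Grsm$, and $\cM$ is open in $\Grsm$; restriction to an open subset does not change stalks, so $\cH^i_y(\IC(\overline{\Gr_\lambdav})|_\cM) = \cH^i_y(\IC(\overline{\Gr_\lambdav}))$ for every $y \in \cM$.

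Next I would organise the fibre $\pi^{-1}(x) \subseteq \cM$ according to the stratification $\cM = \bigsqcup_{\muv \in \Lambdavsm^+} \cM_\muv$ with $\cM_\muv = \Gr_\muv \cap \cM$, which turns the right-hand side above into
\[
\sum_{\muv \in \Lambdavsm^+} \;\sum_{y \in \pi^{-1}(x) \cap \Gr_\muv} \;\sum_i \dim \cH^i_y\bigl(\IC(\overline{\Gr_\lambdav})\bigr).
\]
For the innermost sum I would appeal to the theorem of Lusztig~\cite{lusztig:scf} (the same input used in Remark~\ref{rmk:slice}, there applied at the base point): for $\muv \in \Lambdav^+$ and any $y \in \Gr_\muv$, the total dimension $\sum_i \dim \cH^i_y(\IC(\overline{\Gr_\lambdav}))$ of the stalk of $\Sat(V_\lambdav)$ equals the value at $q = 1$ of Lusztig's $q$-analogue of the weight multiplicity, namely $m_\lambdav^\muv$, the dimension of the $\muv$-weight space of $V_\lambdav$. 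In particular this number depends only on $\muv$ (by $\GO$-equivariance of $\IC(\overline{\Gr_\lambdav})$), and it vanishes unless $\muv \leq \lambdav$, consistently with the vanishing of $m_\lambdav^\muv$ in that range. Substituting, the innermost sum contributes $|\pi^{-1}(x) \cap \Gr_\muv|\, m_\lambdav^\muv$, and summing over $\muv \in \Lambdavsm^+$ yields exactly the right-hand side of the asserted identity.

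I do not anticipate a serious obstacle: the statement is essentially bookkeeping once one has Lemma~\ref{lem:pi-finite} and Lusztig's stalk formula. The only points that need a little care are the identification $(\pi_*\cF)_x \cong \bigoplus_{y\in\pi^{-1}(x)} \cF_y$ for a finite morphism, and the observation that restricting $\IC(\overline{\Gr_\lambdav})$ from $\Grsm$ to its open subset $\cM$ leaves the relevant stalks unchanged; both are routine. One should also note that every $y \in \cM$ lies in $\Gr_\muv$ for a unique \emph{dominant} $\muv \in \Lambdavsm^+$, so that the indexing set $\Lambdavsm^+$ on the right-hand side is the correct one and no weights outside $\Lambdavsm^+$ contribute.
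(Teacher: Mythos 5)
Your proof is correct and follows essentially the same path as the paper's: use finiteness of $\pi$ to decompose the stalk of the pushforward as a sum over fibre points, note that openness of $\cM$ in $\Grsm$ and $\GO$-equivariance of $\IC(\overline{\Gr_\lambdav})$ let you group by orbit, and then invoke Lusztig's result from~\cite{lusztig:scf} to identify the total stalk dimension over $\Gr_\muv$ with $m_\lambdav^\muv$. There is no substantive difference from the argument in the paper.
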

\begin{proof}
Since $\cM$ is open in $\Grsm$ and $\pi: \cM \to \cNsm$ is finite, we have
\[
\cH_x^i(\pi_*\IC(\overline{\Gr_\lambdav})|_\cM) \cong
\bigoplus_{y \in \pi^{-1}(x)} \cH^i_y(\IC(\overline{\Gr_\lambdav})).
\]
Of course, since $\IC(\overline{\Gr_\lambda})$ is $\GO$-equivariant, its stalk at a point $y$ depends, up to isomorphism, only on the $\GO$-orbit, so we have
\[
\dim \cH_x^i(\pi_*\IC(\overline{\Gr_\lambdav})|_\cM) =
\sum_{\muv \in \Lambdavsm^+} |\pi^{-1}(x) \cap \Gr_\muv|\, \dim \cH^i_\muv(\IC(\overline{\Gr_\lambdav})),
\]
where $\cH^i_\muv(\IC(\overline{\Gr_\lambdav}))$ denotes the stalk of $\cH^i(\IC(\overline{\Gr_\lambdav}))$ at some chosen point of $\Gr_\muv$.  Now $\sum_{i} \dim \cH^i_\muv(\IC(\overline{\Gr_\lambdav})) q^{i/2}$ is essentially Lusztig's $q$-analogue of the weight multiplicity.  In fact, it follows from~\cite{lusztig:scf} that $\sum_{i} \dim \cH^i_\muv(\IC(\overline{\Gr_\lambdav})) = m_\lambdav^\muv$, and the lemma follows from that.
\end{proof}

\begin{lem}\label{lem:E-stalkcalc}
Let $C \subset \cNsm$ be the unique open orbit, and let $D_1$ be a connected component of $\pi^{-1}(C)$.  
Let $\lambdav\in\Lambdavsm^+$ be such that $D_1 \subset \Gr_\lambdav$.  For $x \in \cNsm\cap\cN_{H,\sm}$, we have
\begin{equation}\label{eqn:E-stalkcalc}
\sum_{i} \dim \cH_x^i(\pi_*\IC(\overline{D_1}))
= \sum_{\muv\in\Lambdavsm^+}
 | \pi_H^{-1}(x) \cap \Gr_\muv|\, m_\lambdav^\muv.
\end{equation}
\end{lem}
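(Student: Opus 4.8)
The plan is to rewrite the left-hand side of \eqref{eqn:E-stalkcalc} as a weighted point-count of exactly the shape of the right-hand side, and then to check that for $x\in\cNsm\cap\cN_{H,\sm}$ the two counts coincide because the fibres $\pi^{-1}(x)$ and $\pi_H^{-1}(x)$ agree as subsets of $\Gr$. First I would pin down $\IC(\overline{D_1})$: by Lemma~\ref{lem:E-openfibre}, $D_1$ is open and dense in one irreducible component of $\cM$, and inspecting $\Lambdavsm^+$ shows that this component has a unique maximal small coweight, necessarily the $\lambdav$ of the statement, so the component is $\overline{\Gr_\lambdav}\cap\cM$. Since the restriction of an intersection cohomology complex to an open subset of its support is again such a complex, $\IC(\overline{D_1})\cong\IC(\overline{\Gr_\lambdav})|_{\cM}$, whence $\pi_*\IC(\overline{D_1})\cong\pi_*\bigl(\IC(\overline{\Gr_\lambdav})|_{\cM}\bigr)$. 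Applying Lemma~\ref{lem:E-stalkdim} to this $\lambdav$ then identifies the left-hand side of \eqref{eqn:E-stalkcalc} with $\sum_{\muv\in\Lambdavsm^+}|\pi^{-1}(x)\cap\Gr_\muv|\,m_\lambdav^\muv$, so it suffices to prove $\pi^{-1}(x)=\pi_H^{-1}(x)$ inside $\Gr$ for each $x\in\cNsm\cap\cN_{H,\sm}$; the two sums then agree term by term.

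By Lemma~\ref{lem:commute} applied to the inclusion $H\hookrightarrow G$, the map $\pip_G$ restricts on $\Gr_H\cap\Grom$ to $\pip_H$, so $\pi$ and $\pi_H$ agree wherever both are defined, namely on $\cM_H\cap\Grsm$; the desired equality of fibres thus reduces to the two inclusions $\pi_H^{-1}(x)\subseteq\Grsm$ and $\pi^{-1}(x)\subseteq\Gr_H$. For the first, Proposition~\ref{prop:general} in the classical types, applied to $H$, tells us that $\pi_H^{-1}(x)$ meets only the stratum, or pair of strata, $\cM_{H,\muv}$ whose image is the Reeder piece of $H$ through $x$; since $x\in\cNsm=\overline C$ (Lemma~\ref{lem:E-cnsm}), the $G$-saturation of its $H$-orbit lies in $\overline C$, which forces $\muv$ to be one of the coweights listed in the middle column of Table~\ref{tab:Ecalc}, and those were chosen to lie in Weyl-group orbits of small coweights of $G$; hence $\cM_{H,\muv}\subseteq\Grsm$. (Alternatively, a non-small $\muv$ would contradict Lemma~\ref{lem:minnotsmall}.)

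The reverse inclusion $\pi^{-1}(x)\subseteq\Gr_H$ is the main obstacle, since for $G$ of exceptional type there is no matrix model of $\cM$ in which to locate the points of $\pi^{-1}(x)$ directly. Here is the route I would take. From the first inclusion we have $\pi_H^{-1}(x)\subseteq\pi^{-1}(x)$, and both sets are stable under $\iota$, which restricts to $\iota_H$ on $\Gr_H$ (Lemmas~\ref{lem:iota-dual} and~\ref{lem:iota-pi}). Since $\pi$ is finite (Lemma~\ref{lem:pi-finite}), is an \'etale double cover over the open orbit $C$ (Lemma~\ref{lem:E-openfibre}), and each component of $\cM$ is the closure of a single $G$-orbit over which $\pi$ has degree at most $2$, every fibre of $\pi$ has at most two points, and likewise for $\pi_H$ by Proposition~\ref{prop:general} for $H$. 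If $|\pi_H^{-1}(x)|=2$, the inclusion $\pi_H^{-1}(x)\subseteq\pi^{-1}(x)$ is then an equality by cardinality.

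If instead $|\pi_H^{-1}(x)|=1$, its single point is $\iota$-fixed (by the classical case-(1)/(2) analysis it lies in some $\cM_{H,\muv}^{\iota_H}$ with $\muv$ self-dual), so $x\in\pi(\cM^\iota)$; because $\pi^{-1}(C)\to C$ is either a connected double cover (types $E_7,E_8$) or the trivial cover interchanged by $\iota$ (type $E_6$), the locus $\cM^\iota$ is disjoint from $\pi^{-1}(C)$, so $\pi^{-1}(x)\subseteq\pi^{-1}(\cNsm\setminus C)$, and one then checks — running through the finitely many smaller $G$-orbits in $\overline C$, using the closure order from \cite[\S13.4]{carter} and the explicit description of $\pi_H$ — that $\pi$ is injective over each of them, forcing $|\pi^{-1}(x)|=1$. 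This last, essentially bookkeeping, step is where the genuine case-by-case work sits, and it is the part I expect to be most delicate.
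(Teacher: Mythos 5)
Your opening reduction — showing $\pi_*\IC(\overline{D_1})\cong\pi_*\bigl(\IC(\overline{\Gr_\lambdav})|_{\cM}\bigr)$ and invoking Lemma~\ref{lem:E-stalkdim} — is correct, and it shows that \eqref{eqn:E-stalkcalc} is \emph{equivalent} to the equality $\pi^{-1}(x)=\pi_H^{-1}(x)$ (given that the relevant $m_\lambdav^\muv$ are positive, which is separately noted in the paper). But that fibre equality \emph{is} Corollary~\ref{cor:E-fibre}, which in the paper is \emph{deduced from} Lemma~\ref{lem:E-stalkcalc} together with Lemma~\ref{lem:E-stalkdim}. So you have reversed the logical flow: the content of the lemma is precisely the independent input that lets one conclude the fibre equality, and your plan requires an independent proof of that fibre equality instead. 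The paper never attempts one; it proves the lemma by computing both sides of~\eqref{eqn:E-stalkcalc} directly and separately (the left-hand side via the decomposition $\pi_*\IC(\overline{D_1})\cong\IC(\overline C)$ or $\IC(\overline C)\oplus\IC(\overline C,\sigma)$ from Lemma~\ref{lem:E-openfibre} and the Lusztig--Shoji algorithm, recorded in Table~\ref{tab:Estalk}; the right-hand side via Proposition~\ref{prop:general} for $H$, Table~\ref{tab:Ecalc}, and the Freudenthal/LiE multiplicities) and observing that the numbers agree.

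Your attempted direct proof of $\pi^{-1}(x)=\pi_H^{-1}(x)$ has a genuine gap at its heart. The key step is the assertion that "every fibre of $\pi$ has at most two points'' because $\pi$ is finite and is an \'etale double cover over the open orbit. This does not follow: for a finite morphism, the fibre cardinality is not bounded above by the generic degree — e.g., the normalization of a nodal cubic is finite of generic degree $1$ but has a $2$-point fibre over the node. A priori $\pi^{-1}(x)$ over a smaller orbit could pick up extra points not seen over $C$, and excluding this is exactly what the lemma is designed to do. Without that bound, the cardinality argument in the case $|\pi_H^{-1}(x)|=2$ collapses, and in the case $|\pi_H^{-1}(x)|=1$ you yourself flag the remaining "bookkeeping'' as the delicate unfinished part. (There is also a smaller unaddressed point: the inclusion $\pi_H^{-1}(x)\subseteq\Grsm$ that you want is itself a finite but nontrivial verification — it amounts to the observation, made in constructing Table~\ref{tab:Ecalc}, that every $\muv\in\Lambdav_{H,\sm}^+$ whose $H$-Reeder piece has $G$-saturation inside $\overline C$ actually lies in the $W$-orbit of some small coweight of $G$; this is a case check, not a formal consequence of Lemma~\ref{lem:minnotsmall}.) In short, the proposal replaces the paper's brute-force but self-contained double computation with an appeal to a fibre equality whose proof is not supplied and whose claimed a~priori bound is unjustified.
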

\begin{proof}
Each side of this formula depends only on the $H$-orbit of $x$.  Let $Y_H$ denote this orbit; it must be one of those appearing in Table~\ref{tab:Ecalc}.  The proof consists of simply calculating both sides separately for each possible orbit, and checking that the calculations agree.

We explain first how to calculate the right-hand side.  Because Proposition~\ref{prop:general} holds for $H$, we know the cardinality of $\pi_H^{-1}(x)$ (it is either~$1$ or~$2$).  That same proposition also tells us the $\HO$-orbits to which these points belong; then, by referring to Table~\ref{tab:Ecalc}, one can determine the $\GO$-orbit containing each point of $\pi_H^{-1}(x)$.  Finally, the multiplicities $m_\lambdav^\muv$ are known from (say) the Freudenthal multiplicity formula.  For explicit calculations, the authors relied on the LiE software package~\cite{lie}.

For the left-hand side, note that because $\pi$ is finite, the functor $\pi_*$ is $t$-exact for perverse sheaves, and it takes intersection cohomology complexes to intersection cohomology complexes.  Indeed, it follows from Lemma~\ref{lem:E-openfibre} that
\[
\pi_*\IC(\overline{D_1}) \cong
\begin{cases}
\IC(\overline C) \oplus \IC(\overline C, \sigma) & \text{in types $E_7$ and $E_8$,} \\
\IC(\overline C) & \text{in type $E_6$.}
\end{cases}
\]
Here, $\sigma$ denotes the unique nontrivial local system on $C$ in types $E_7$ and $E_8$.  The stalks of simple perverse sheaves on $\cN$ can be computed by the so-called Lusztig--Shoji algorithm (see~\cite[\S 24]{lusztig:cs5} or~\cite[\S 4]{shoji}), for which an implementation for the GAP computer algebra system is available from~\cite{a:lsa}.  For each simple perverse sheaf $\cF$ and each $x$, this algorithm computes the polynomial
\[
q^{(\dim \cN)/2}\sum_{i} \dim \cH_x^i(\cF)\, q^{i/2}.
\]
The relevant polynomials (which depend only on the $G$-orbit of $x$, of course) are recorded in Table~\ref{tab:Estalk}.  Evaluating these polynomial at $q = 1$ yields the left-hand side of~\eqref{eqn:E-stalkcalc}.  We leave it to the reader to compare the left- and right-hand sides of~\eqref{eqn:E-stalkcalc} in each case.
\end{proof}

\begin{cor}\label{cor:E-fibre}
For $x \in \cNsm\cap\cN_{H,\sm}$, we have $\pi^{-1}(x) = \pi_H^{-1}(x)$.
\end{cor}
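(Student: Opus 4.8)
The plan is to fix $x\in\cNsm\cap\cN_{H,\sm}$ and, for each $\muv\in\Lambdavsm^+$, to compare the finite sets $\pi^{-1}(x)\cap\Gr_\muv$ and $\pi_H^{-1}(x)\cap\Gr_\muv$, all intersections being formed inside $\Gr$ via the closed embedding $\Gr_H\hookrightarrow\Gr$. One inclusion is immediate. If $y\in\pi_H^{-1}(x)$ lies in $\Gr_\muv$ for some $\muv\in\Lambdavsm^+$, then $y\in\Grsm$; since also $y\in\Gr_{H,0}^-\subseteq\Grom$, we have $y\in\cM$, and $\pi(y)=\pi_H(y)=x$ by Lemma~\ref{lem:commute}, so $y\in\pi^{-1}(x)$. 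Hence $\pi_H^{-1}(x)\cap\Gr_\muv\subseteq\pi^{-1}(x)\cap\Gr_\muv$, and in particular $|\pi_H^{-1}(x)\cap\Gr_\muv|\le|\pi^{-1}(x)\cap\Gr_\muv|$, for every $\muv\in\Lambdavsm^+$.

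The heart of the proof is a ``squeeze.'' Choose a maximal coweight $\lambdav\in\Lambdavsm^+$ with $D_1\subset\Gr_\lambdav$, where $D_1$ is a connected component of $\pi^{-1}(C)$ as in Lemma~\ref{lem:E-stalkcalc}. Because $\dim D_1=\dim C=\dim\cM=\dim\Grsm$ and $D_1\subseteq\Gr_\lambdav\subseteq\Grsm$ with $D_1$ and $\Gr_\lambdav$ irreducible, $\Gr_\lambdav$ is of maximal dimension, $\overline{\Gr_\lambdav}$ is an irreducible component of $\Grsm$, and $\overline{D_1}=\overline{\Gr_\lambdav}$. Therefore $\pi_*\bigl(\IC(\overline{D_1})|_\cM\bigr)=\pi_*\bigl(\IC(\overline{\Gr_\lambdav})|_\cM\bigr)$, so the left-hand sides of~\eqref{eqn:E-stalkcalc} and of the identity in Lemma~\ref{lem:E-stalkdim} (for this $\lambdav$) coincide. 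Equating the right-hand sides gives
\[
\sum_{\muv\in\Lambdavsm^+}|\pi^{-1}(x)\cap\Gr_\muv|\,m_\lambdav^\muv
 \;=\;\sum_{\muv\in\Lambdavsm^+}|\pi_H^{-1}(x)\cap\Gr_\muv|\,m_\lambdav^\muv .
\]
In types $E_7$ and $E_8$ there is a unique maximal $\lambdav$, so $\muv\le\lambdav$ and hence $m_\lambdav^\muv>0$ for all $\muv\in\Lambdavsm^+$; in type $E_6$ I would run this for each of the two maximal small coweights and add the identities, using that every $\muv\in\Lambdavsm^+$ lies below at least one of them so that the combined coefficient of $|\pi^{-1}(x)\cap\Gr_\muv|$ is positive. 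Either way, the displayed equality and the inequality above force $|\pi^{-1}(x)\cap\Gr_\muv|=|\pi_H^{-1}(x)\cap\Gr_\muv|$, hence $\pi^{-1}(x)\cap\Gr_\muv=\pi_H^{-1}(x)\cap\Gr_\muv$, for every $\muv$. Since $\pi^{-1}(x)\subseteq\cM=\bigsqcup_{\muv\in\Lambdavsm^+}\cM_\muv$, taking the union over $\muv$ gives $\pi^{-1}(x)=\pi_H^{-1}(x)\cap\Grsm$.

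Finally I would upgrade this to $\pi^{-1}(x)=\pi_H^{-1}(x)$ by showing $\pi_H^{-1}(x)\subseteq\Grsm$. As $x\in\cNsm=\pi(\cM)$, the set $\pi^{-1}(x)=\pi_H^{-1}(x)\cap\Grsm$ is nonempty, so some point of $\pi_H^{-1}(x)$ lies in $\Grsm$. By Proposition~\ref{prop:general} applied to $H$ together with the disjointness of Reeder pieces, the points of $\pi_H^{-1}(x)$ all lie in $\cM_{H,\nuv}\cup\iota(\cM_{H,\nuv})$ for a single small coweight $\nuv$ of $H$. Viewed inside $\Gr$, the set $\cM_{H,\nuv}$ lies in a single $\GO$-orbit $\Gr_\lambdav$, and then $\iota(\cM_{H,\nuv})$ lies in $\Gr_{-w_0\lambdav}$ by Lemma~\ref{lem:iota-dual}; moreover $\lambdav\in\Lambdavsm^+$ if and only if $-w_0\lambdav\in\Lambdavsm^+$. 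Since one of $\Gr_\lambdav$, $\Gr_{-w_0\lambdav}$ meets $\Grsm$, both are contained in $\Grsm$, so $\pi_H^{-1}(x)\subseteq\Grsm\cap\Gr_{H,0}^-\subseteq\cM$; by Lemma~\ref{lem:commute} this gives $\pi_H^{-1}(x)\subseteq\pi^{-1}(x)$, and we are done.

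The only step I expect to demand real care is the squeeze: pinning down the identification $\overline{D_1}=\overline{\Gr_\lambdav}$ so that the two left-hand sides are literally equal, and ensuring the multiplicities $m_\lambdav^\muv$ serving as weights are all strictly positive---which in type $E_6$ genuinely requires combining the identities for the two maximal small coweights, since no single maximal $\lambdav$ dominates all of $\Lambdavsm^+$. The rest is formal, given Proposition~\ref{prop:general} for $H$ and the orbit data of Section~\ref{sect:classical}.
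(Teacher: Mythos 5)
Your proof is correct and follows essentially the same route as the paper's: compare the stalk-dimension identities of Lemma~\ref{lem:E-stalkdim} and Lemma~\ref{lem:E-stalkcalc} for a maximal small coweight $\lambdav$ (both maximal ones in type $E_6$), and conclude from positivity of the weight multiplicities $m_\lambdav^\muv$ that the set differences $(\pi^{-1}(x)\smallsetminus\pi_H^{-1}(x))\cap\Gr_\muv$ are all empty. The extra care you take with the containment $\pi_H^{-1}(x)\subseteq\pi^{-1}(x)$ (equivalently $\pi_H^{-1}(x)\subset\Grsm$) is sensible; the paper leaves this implicit, since it is already needed in the verification of Lemma~\ref{lem:E-stalkcalc} via Table~\ref{tab:Ecalc}.
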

\begin{proof}
Retain the notation in the statement of Lemma~\ref{lem:E-stalkcalc}.  Comparing that statement to Lemma~\ref{lem:E-stalkdim}, we see that
\[
\sum_{\muv\in\Lambdavsm^+}
|(\pi^{-1}(x) \smallsetminus \pi_H^{-1}(x)) \cap \Gr_\muv|\, m_\lambdav^\muv = 0.
\]
In types $E_7$ and $E_8$, we have $m_\lambdav^\muv > 0$ for all $\muv\in\Lambdavsm^+$.  In type $E_6$, $\Grsm$ has two components, and there are two choices for $D_1$ and for $\lambdav$ in Lemma~\ref{lem:E-stalkcalc}.  For each $\muv$, we have $m_\lambdav^\muv > 0$ for at least one of the two choices of $\lambdav$.  In all three types, we may then conclude that $\pi^{-1}(x) \smallsetminus \pi_H^{-1}(x) = \varnothing$, as desired.
\end{proof}

\begin{cor}\label{cor:E-meeting}
Every $G$-orbit in $\cM$ meets $\cM_H$.
\end{cor}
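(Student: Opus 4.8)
The plan is to reduce the statement immediately to the two preceding results, Lemma~\ref{lem:E-cnsm} and Corollary~\ref{cor:E-fibre}, with no further geometry required. First I would take an arbitrary $G$-orbit $\mathcal{O}$ in $\cM$ and look at its image $\pi(\mathcal{O})$. Since $\pi: \cM\to\cNsm$ is $G$-equivariant, $\pi(\mathcal{O})$ is a nonempty $G$-stable subset of $\cNsm$, and Lemma~\ref{lem:E-cnsm} gives $\cNsm\subseteq G\cdot\cN_{H,\sm}$. A nonempty $G$-stable subset of $G\cdot\cN_{H,\sm}$ must meet $\cN_{H,\sm}$: writing one of its points as $g\cdot x$ with $g\in G$ and $x\in\cN_{H,\sm}$, the point $g^{-1}\cdot(g\cdot x)=x$ again lies in the subset by $G$-stability. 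So I would choose $x\in\pi(\mathcal{O})\cap\cN_{H,\sm}$; since $\pi(\mathcal{O})\subseteq\cNsm$, this $x$ lies in $\cNsm\cap\cN_{H,\sm}$, which is exactly the hypothesis appearing in Corollary~\ref{cor:E-fibre}.

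Next I would lift $x$ back into $\mathcal{O}$: pick $y\in\mathcal{O}$ with $\pi(y)=x$. By Corollary~\ref{cor:E-fibre}, $\pi^{-1}(x)=\pi_H^{-1}(x)$, and $\pi_H^{-1}(x)\subseteq\cM_H$ by the very definition of the map $\pi_H: \cM_H\to\cN$. Hence $y\in\cM_H$, and therefore $\mathcal{O}\cap\cM_H\neq\varnothing$, which is the assertion of the corollary.

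I do not anticipate any real difficulty, since all the substance has already been extracted in Corollary~\ref{cor:E-fibre}; the argument is pure bookkeeping. The only points that need a moment's care are that the equality $\pi^{-1}(x)=\pi_H^{-1}(x)$ is to be read inside $\cM$ (via the closed embedding $\cM_H\hookrightarrow\cM$ induced by $\Gr_H\hookrightarrow\Gr$), so that $y\in\pi_H^{-1}(x)$ genuinely forces $y\in\cM_H$; and that $\pi$ does surject onto $\cNsm$, which is again Lemma~\ref{lem:E-cnsm}.
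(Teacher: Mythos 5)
Your argument is correct and is essentially the same as the paper's proof, just phrased directly instead of by contradiction: both use Lemma~\ref{lem:E-cnsm} to find a point $x \in \pi(\mathcal{O}) \cap \cN_{H,\sm}$ and then invoke Corollary~\ref{cor:E-fibre} to force the fibre over $x$, and hence some point of $\mathcal{O}$, into $\cM_H$.
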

\begin{proof}
Suppose $D'$ is a $G$-orbit in $\cM$ that does not meet $\cM_H$. We know from Lemma~\ref{lem:E-cnsm}
that $\pi(D')$ is a $G$-orbit in $G\cdot\cN_{H,\sm}$. But then for $x\in\pi(D')\cap\cN_{H,\sm}$,
the fibre $\pi^{-1}(x)$ contains a point of $D'$ which is not in $\pi_H^{-1}(x)$, contradicting
Corollary~\ref{cor:E-fibre}.  
\end{proof}

\begin{table}
\[
E_6:
\begin{array}{c|c}
\lambdav & V_\lambdav^\Tv \otimes \epsilon \\
\hline
3\omegav_1, 3\omegav_6 & \phi_{24,12} \\
\omegav_1 + \omegav_3, \omegav_5 + \omegav_6 & \phi_{64,13} \\
\omegav_4 & \phi_{30,15} + \phi_{15,17} \\
\omegav_1 + \omegav_6 & \phi_{20,20} \\
\omegav_2 & \phi_{6,25} \\
0 & \phi_{1,36}
\end{array}
\qquad
E_7:
\begin{array}{c|c}
\lambdav & V_\lambdav^\Tv \otimes \epsilon \\
\hline
\omegav_2+\omegav_7 & \phi_{120,25} + \phi_{105,26} \\
\omegav_3 & \phi_{56,30} + \phi_{21,33} \\
2\omegav_7 & \phi_{21,36} \\
\omegav_6 & \phi_{27,37} \\
\omegav_1 & \phi_{7,46} \\
0 & \phi_{1,63}
\end{array}
\]
\bigskip
\[
E_8: 
\begin{array}{c|c}
\lambdav & V_\lambdav^\Tv \otimes \epsilon \\
\hline
\omegav_2 & \phi_{210,52} + \phi_{160,55}\\
\omegav_7 & \phi_{112,63} + \phi_{28,68} \\
\omegav_1 & \phi_{35,74} \\
\omegav_8 & \phi_{8,91} \\
0 & \phi_{1,120}
\end{array}
\qquad
F_4:
\begin{array}{c|c}
\lambdav & V_\lambdav^\Tv \otimes \epsilon \\
\hline
\omegav_2 & \phi_{8,9}'' + \phi_{1,12}'' \\
\omegav_4 & \phi_{4,13} \\
\omegav_1 & \phi_{2,16}'' \\
0 & \phi_{1,24}
\end{array}
\qquad
G_2:
\begin{array}{c|c}
\lambdav & V_\lambdav^\Tv \otimes \epsilon \\
\hline
\omegav_1 & \phi_{2,1} \\
\omegav_2 & \phi_{1,3}'' \\
0 & \phi_{1,6} 
\end{array}
\]
\bigskip
\caption{Zero weight spaces in the exceptional groups}\label{tab:exczw}
\end{table}

We are now ready to prove the main result of this section.

\begin{proof}[Proof of Proposition~\ref{prop:general} for types $E_6$, $E_7$, and $E_8$]
The first two statements are in Lemmas~\ref{lem:E-Mirred},~\ref{lem:E-openfibre} and~\ref{lem:E-cnsm}.  Now, let $\lambdav\in\Lambdavsm^+$. By Corollary~\ref{cor:E-meeting}, the nilpotent orbits in $\cNsm$ listed
for $\lambdav$ in Table~\ref{tab:Ecalc} are precisely those in the Reeder piece $\pi(\cM_{\lambdav})$.
We may check by inspection that for distinct $\lambdav,\nuv\in\Lambdavsm^+$, the Reeder pieces 
$\pi(\cM_\lambdav)$ and $\pi(\cM_\nuv)$ are either disjoint or equal, and that equality occurs if and only if $\nuv = -w_0\lambdav$.  Thus, we have established the bijection~\eqref{eqn:gen-bij} and the fact that the Reeder pieces form a partition of $\cNsm$.

It is also clear by inspection that each Reeder piece consists of one or two nilpotent orbits.  Consider now a $\iota$-stable union of $\GO$-orbits $\Gr_\lambdav \cup \Gr_{-w_0\lambdav}$ contained in $\Grsm$, and let $S$ be the corresponding Reeder piece.  In cases where $S$ consists of a single nilpotent orbit $C$, we see from Table~\ref{tab:Ecalc} that $\pi_H^{-1}(x) \cap \Gr_\lambdav$ is a singleton for all $x \in C \cap \cN_{H,\sm}$.   By Corollary~\ref{cor:E-fibre} and the $G$-equivariance of $\pi$, it follows that $\pi^{-1}(x) \cap \Gr_\lambdav$ is a singleton for all $x \in C$, so in fact, $\pi$ gives rise to an isomorphism $\pi^{-1}(C) \cap \Gr_\lambdav \to C$.

If $S$ consists of two nilpotent orbits $C_1$ and $C_2$ with $C_2 \subset \overline{C_1}$, then we see by inspection that $\Gr_\lambdav = \Gr_{-w_0\lambdav}$.  The reasoning of the previous paragraph applies verbatim to $C_2$.  Similar reasoning shows that $\pi^{-1}(C_1) \to C_1$ is a $2$-fold \'etale cover.  Moreover, $\pi^{-1}(C_1)$ must be connected because it has the same dimension as $\Gr_\lambdav$, and the latter is irreducible.  Finally, using the fact that Proposition~\ref{prop:general} holds for $H$, we see that the $\Z/2\Z$-action is free on fibres over points of $C_1 \cap \cN_{H,\sm}$, and therefore on all of $\pi^{-1}(C_1)$.

We have now established all the geometric assertions in Proposition~\ref{prop:general}.  It remains to check the claims involving the Springer correspondence.  For each small representation $V$ in type $E$, the Weyl group action on $V^\Tv$ has been computed by Reeder~\cite[\S 4]{reeder1}.  In Table~\ref{tab:exczw}, we record the results of tensoring Reeder's calculations with $\epsilon$.  Finally, one may consult the tables for the Springer correspondence 
in~\cite[Section 13.3]{carter} to verify that either~\eqref{eqn:zspring-single} or~\eqref{eqn:zspring-double} holds, as appropriate.
\end{proof}

\begin{table}
\tikzstyle{reed1}=[draw,inner sep=0pt,shape=ellipse,minimum height=.75cm, minimum width=0.75cm]
\tikzstyle{reed2}=[draw,shape=ellipse,rotate=-40,minimum height=1cm,minimum width=3cm]
\tikzstyle{lreed2}=[draw,shape=ellipse,rotate=38,minimum height=1cm,minimum width=3.2cm]
\tikzstyle{pimap}=[dashed,->]
\begin{center}
\small
\begin{tabular}{cc}
\begin{tikzpicture}[x=.9cm]
  \path (-4,0)  node (w13)  {$3\omegav_1$};
  \path (-2,0)  node (w63)  {$3\omegav_6$};
  \path (-4,-1) node (w1p3) {$\omegav_1 + \omegav_3$};
  \path (-2,-1) node (w5p6) {$\omegav_5 + \omegav_6$};
  \path (-3.0,-2) node (w4) {$\omegav_4$};
  \path (-3.0,-4) node (w1p6) {$\omegav_1 + \omegav_6$};
  \path (-3.0,-5) node (w2) {$\omegav_2$};
  \path (-3.0,-6) node (w0) {$0$};

  \path (0,0)   node[reed1] (a22)  {$2A_2$};
  \path (0,-1)  node[reed1] (a2a1) {$A_2+A_1$};
  \path (0,-2)  node (a2)   {$A_2$};
  \path (1,-3)  node (a13)  {$3A_1$};
  \path (0,-4)  node[reed1] (a12)  {$2A_1$};
  \path (0,-5)  node[reed1] (a1)   {$A_1$};
  \path (0,-6)  node[reed1] (a0)   {$1$};
  
  \path (0.5,-2.5) coordinate[reed2] (e1);
  
  \draw (a22) -- (a2a1) -- (a2) -- (a13) -- (a12) -- (a1) -- (a0)
        (a2) -- (a12);
  \draw (w13) -- (w1p3) -- (w4) -- (w1p6) -- (w2) -- (w0)
        (w63) -- (w5p6) -- (w4);
  \draw[pimap] (w63) .. controls(-1,-0.25) .. (a22);
  \draw[pimap] (w13) .. controls(-2,0.5) .. (a22);
  \draw[pimap] (w5p6) .. controls(-1,-1.25) .. (a2a1);
  \draw[pimap] (w1p3) .. controls(-2,-0.5) .. (a2a1);
  \draw[pimap] (w4) .. controls(-1.5,-2) .. (e1);
  \draw[pimap] (w1p6) -- (a12);
  \draw[pimap] (w2) -- (a1);
  \draw[pimap] (w0) -- (a0);
\end{tikzpicture}
&
\begin{tikzpicture}[x=.9cm]
  \path (-3,0)  node (w2p7) {$\omegav_2 + \omegav_7$};
  \path (-4,-2) node (w3) {$\omegav_3$};
  \path (-2,-4) node (w72) {$2\omegav_7$};
  \path (-3,-5) node (w6) {$\omegav_6$};
  \path (-3,-6) node (w1) {$\omegav_1$};
  \path (-3,-7) node (w0) {$0$};

  \path (0,0)   node (a2a1) {$A_2+A_1$};
  \path (1,-1)  node (a14)  {$4A_1$};
  \path (-1,-2)  node (a2)   {$A_2$};
  \path (0,-3)  node (a13p) {$(3A_1)'$};
  \path (1,-4)  node[reed1] (a13s) {$(3A_1)''$};
  \path (0,-5)  node[reed1] (a12)  {$2A_1$};
  \path (0,-6)  node[reed1] (a1)   {$A_1$};
  \path (0,-7)  node[reed1] (a0)   {$1$};
  
  \path (0.5,-0.5) coordinate[reed2] (e1);
  \path (-0.5,-2.5) coordinate[reed2] (e2);
  
  \draw (a2a1) -- (a2) -- (a13p) -- (a12) -- (a1) -- (a0)
        (a2a1) -- (a14) -- (a13s) -- (a12)
        (a14) -- (a13p)
        (a2) .. controls(-1,-4) .. (a12);
  \draw (w2p7) -- (w3) .. controls(-4,-4).. (w6) -- (w1) -- (w0)
        (w2p7) .. controls(-2,-1) .. (w72) -- (w6);
  \draw[pimap] (w2p7) .. controls(-2,0) .. (e1);
  \draw[pimap] (w3) .. controls(-3,-2) .. (e2);
  \draw[pimap] (w72) -- (a13s);
  \draw[pimap] (w6) -- (a12);
  \draw[pimap] (w1) -- (a1);
  \draw[pimap] (w0) -- (a0);
\end{tikzpicture}
\\
$E_6$ & $E_7$
\end{tabular}
\bigskip

\begin{tabular}{ccc}
\noindent
\begin{tikzpicture}[x=0.9cm]
  \path (-2,0)  node (w2) {$\omegav_2$};
  \path (-2,-2) node (w7) {$\omegav_7$};
  \path (-2,-4) node (w1) {$\omegav_1$};
  \path (-2,-5) node (w8) {$\omegav_8$};
  \path (-2,-6) node (w0) {$0$};
  
  \path (0,0)   node (a2a1) {$A_2+A_1$};
  \path (1,-1)  node (a14)  {$4A_1$};
  \path (0,-2)  node (a2)   {$A_2$};
  \path (1,-3)  node (a13)  {$3A_1$};
  \path (0,-4)  node[reed1] (a12)  {$2A_1$};
  \path (0,-5)  node[reed1] (a1)   {$A_1$};
  \path (0,-6)  node[reed1] (a0)   {$1$};
  
  \path (0.5,-0.5) coordinate[reed2] (e1);
  \path (0.5,-2.5) coordinate[reed2] (e2);
  
  \draw (a2a1) -- (a2) -- (a12) -- (a1) -- (a0)
        (a2a1) -- (a14) -- (a13) -- (a12)
        (a2) -- (a13);
  \draw (w2) -- (w7) -- (w1) -- (w8) -- (w0);
  \draw[pimap] (w2) .. controls(-1,0) .. (e1);
  \draw[pimap] (w7) .. controls(-1,-2) .. (e2);
  \draw[pimap] (w1) -- (a12);
  \draw[pimap] (w8) -- (a1);
  \draw[pimap] (w0) -- (a0);
\end{tikzpicture}
&
\begin{tikzpicture}[x=0.9cm]
  \path (-2,0)  node (w3) {$\omegav_2$};
  \path (-2,-2) node (w1) {$\omegav_4$};
  \path (-2,-3) node (w4) {$\omegav_1$};
  \path (-2,-4) node (w0) {$0$};

  \path (0,0)   node (a2)   {$A_2$};
  \path (1,-1)  node (a1a1) {$A_1+\widetilde{A}_1$};
  \path (0,-2)  node[reed1] (a1t)  {$\widetilde{A}_1$};
  \path (0,-3)  node[reed1] (a1)   {$A_1$};
  \path (0,-4)  node[reed1] (a0)   {$1$};

  \path (0.5,-0.5) coordinate[reed2] (e1);

  \draw (a2) -- (a1a1) -- (a1t) -- (a1) -- (a0)
        (a2) -- (a1t);
  \draw (w3) -- (w1) -- (w4) -- (w0);
  \draw[pimap] (w3) .. controls(-1,0) .. (e1);
  \draw[pimap] (w1) -- (a1t);
  \draw[pimap] (w4) -- (a1);
  \draw[pimap] (w0) -- (a0);
\end{tikzpicture}
&
\begin{tikzpicture}[x=0.9cm]
  \path (-2,0)  node (w2) {$\omegav_1$};
  \path (-2,-2) node (w1) {$\omegav_2$};
  \path (-2,-3) node (w0) {$0$};

  \path (0,0)   node (g2)   {$G_2(a_1)$};
  \path (1,-1)  node (a1t) {$\widetilde{A}_1$};
  \path (0,-2)  node[reed1] (a1)   {$A_1$};
  \path (0,-3)  node[reed1] (a0)   {$1$};

  \path (0.5,-0.5) coordinate[reed2] (e1);

  \draw (g2) -- (a1t) -- (a1) -- (a0)
        (g2) -- (a1);
  \draw (w2) -- (w1) -- (w0);
  \draw[pimap] (w2) .. controls(-1,0) .. (e1);
  \draw[pimap] (w1) -- (a1);
  \draw[pimap] (w0) -- (a0);
\end{tikzpicture}
\\
$E_8$ & $F_4$ & $G_2$
\end{tabular}
\bigskip
\end{center}
\caption{$\Grsm$, $\cNsm$, and Reeder pieces in the exceptional types}\label{tab:excpo}
\end{table}

\subsection{Types $F_4$ and $G_2$}
\label{subsect:fg}

The poset $\Lambdavsm^+$ for each of these types is displayed in Table~\ref{tab:excpo}. Note that
we are numbering the nodes of the Dynkin diagram for $G$ as in \cite[Plates VIII, IX]{bourbaki},
which means that the fundamental weights of $\Gv$ are numbered in the reverse of what would be the natural
order if we were considering $\Gv$ alone.
The involution $\lambdav\mapsto-w_0\lambdav$ is trivial in these types. By inspection, we have:
\begin{lem} \label{lem:FG-Mirred}
$\cM$ is irreducible. \qed
\end{lem}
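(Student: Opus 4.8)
The plan is to reduce this to the purely combinatorial observation that $\Lambdavsm^+$ has a greatest element in each of these two types, exactly as was done for type $E_7$ and $E_8$ in the proof of Lemma~\ref{lem:E-Mirred}. Recall from Section~\ref{sect:not} that $\Grsm$ is the union of the $\GO$-orbits $\Gr_\lambdav$ for $\lambdav\in\Lambdavsm^+$, that the closure order on these orbits agrees with the partial order on $\Lambdav^+$, that $\Lambdavsm^+$ is a lower order ideal (so $\overline{\Gr_\lambdav}\subseteq\Grsm$ for $\lambdav\in\Lambdavsm^+$), and that $\cM = \Grsm\cap\Grom$ is open and dense in $\Grsm$. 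Each $\Gr_\lambdav$, being a single $\GO$-orbit, is irreducible, so the irreducible components of $\Grsm$ are precisely the orbit closures $\overline{\Gr_\lambdav}$ with $\lambdav$ maximal in $\Lambdavsm^+$. Since a nonempty open dense subset of a topological space is irreducible if and only if the ambient space is, $\cM$ is irreducible if and only if $\Lambdavsm^+$ has a unique maximal element.

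First I would therefore read off the poset $\Lambdavsm^+$ from Table~\ref{tab:excpo} (or, independently, compute it from the defining condition $\lambdav\not\geq 2\alphav_0$, where $\alphav_0$ is the highest short root of $\Gv$). In type $G_2$ one finds the three-element chain $0 < \omegav_2 < \omegav_1$, and in type $F_4$ the four-element chain $0 < \omegav_1 < \omegav_4 < \omegav_2$. In each case there is a unique maximal element $\lambdav_0$ --- namely $\omegav_1$ for $G_2$ and $\omegav_2$ for $F_4$ --- so $\Grsm = \overline{\Gr_{\lambdav_0}}$ is irreducible, and hence so is its open dense subset $\cM$. (The node-numbering convention for $\Gv$ recalled just before the lemma is what makes the top element $\omegav_2$ rather than $\omegav_4$ in type $F_4$.)

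I do not expect any real obstacle here: the geometric ingredients --- irreducibility of $\GO$-orbits and density of $\cM$ in $\Grsm$ --- are already in place from Section~\ref{sect:not}, and what remains is the finite check that each of these two small-coweight posets is totally ordered, hence has a top element. The only point requiring a little care is getting the enumeration of $\Lambdavsm^+$ right, i.e.\ confirming that no coweight lying strictly above $\omegav_1$ (in $G_2$) or $\omegav_2$ (in $F_4$) is small; this is a short computation in the rank-two, respectively rank-four, root lattice using the dominance order, and it is already encoded in the Hasse diagrams of Table~\ref{tab:excpo}.
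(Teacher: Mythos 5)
Your proof is correct and follows exactly the route the paper intends: the paper's ``by inspection'' refers to reading off from Table~\ref{tab:excpo} that $\Lambdavsm^+$ is a chain in both types $F_4$ and $G_2$, hence has a unique maximal element, so $\Grsm$ is a single $\GO$-orbit closure (irreducible) and its dense open subset $\cM$ is irreducible. This is the same argument used in Lemma~\ref{lem:E-Mirred} for types $E_7$ and $E_8$, which you correctly invoke as the template.
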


Groups of these types arise by `folding': each such $G$ is the set of fixed points of an automorphism $\sigma$ of some larger simply-connected simple algebraic group $H$ of simply-laced type, where $\sigma$ comes from an automorphism of the Dynkin diagram of $H$.  The type of $H$ is given in the following table.
\[
\begin{array}{c|c}
G & H \\
\hline
F_4 & E_6 \\
G_2 & D_4
\end{array}
\]
The inclusion $G \hookrightarrow H$ induces embeddings $\Gr \hookrightarrow \Gr_H$ and $\cN \hookrightarrow \cN_H$.

In both cases, Proposition~\ref{prop:general} is already known for $H$.  As in Section~\ref{subsect:e}, we will deduce Proposition~\ref{prop:general} for $G$ from the result for $H$, but since $H$ is now bigger than $G$, the arguments will be much easier.  We begin once again with some computations, recorded in Table~\ref{tab:FGcalc}.   

\begin{enumerate}
\item {\it For each $\lambdav\in\Lambdavsm^+$, compute $\dim\Gr_\lambdav$.} As before, we use the formula $\dim\Gr_\lambdav=\langle\lambdav,2\rho\rangle$. 

\item {\it For each $\lambdav\in\Lambdavsm^+$, find the $\HO$-orbit $\Gr_{H,\muv}$ containing $\Gr_\lambdav$.}  To write down coweights of $H$, we choose a $\sigma$-stable maximal torus $T_H$ such that $T_H^\sigma = T$. With a suitable choice of positive system, the desired coweight $\muv$ is simply $\lambdav$ regarded as a $\sigma$-stable element of $\Lambdav_H^+$. Crucially, we observe that in each case $\muv\in\Lambdav_{H,\sm}^+$. 

\item {\it For this $\HO$-orbit $\Gr_{H,\muv}$, find the nilpotent orbits in $\cN_{H,\sm}$ contained in $\pi_H(\cM_{H,\muv})$.} We refer to Table~\ref{tab:excpo} for $H$ of type $E_6$, and to Table~\ref{tab:classcalc} or~\ref{tab:classpo} for $H$ of type $D_4$.

\item {\it For each nilpotent orbit in $\cN_{H,\sm}$, compute its intersection with $\cN$.}  Recall that by Dynkin--Kostant theory, given a nilpotent orbit $C \subset \cN$, one can associate to it a `weighted Dynkin diagram', which is really a coweight $\nuv(C): \C^\times \to T$ obtained by restricting a certain homomorphism $SL_2 \to G$ to the subgroup $\{[\begin{smallmatrix} a & \\ & a^{-1} \end{smallmatrix}]\} \cong \C^\times$.  The same remarks apply to $H$.  For nilpotent orbits $C \subset \cN$ and $C' \subset \cN_H$, we have $C \subset C'$ if and only if 
$\nuv(C)$, when regarded as a $\sigma$-stable element of $\Lambdav_H^+$, equals $\nuv(C')$.  For each $C'$, we can find $C$ and $\dim C$ by consulting the tables of weighted Dynkin diagrams 
in~\cite[Section 13.1]{carter}.
\end{enumerate}

\begin{table}
\[\small
\begin{array}{@{}cc@{}}
F_4: &
\begin{array}{cc|c|c|cc}
\lambdav & \dim \Gr_\lambdav &
  \muv :  \Gr_\lambdav \subset \Gr_{H,\muv} &
  \pi_H(\cM_{H,\muv}) &
  \cN \cap \pi_H(\cM_{H,\muv}) & \dim  \\
\hline
\omegav_2 & 30 &
   \omegav_{H,4} & A_2 & A_2 & 30  \\
 &&              & 3A_1 & A_1 + \widetilde{A}_1 & 28 \\           
\hline
\omegav_4 & 22 &
   \omegav_{H,1} + \omegav_{H,6} & 2A_1 & \widetilde{A}_1 & 22 \\
\hline
\omegav_1 & 16 &
   \omegav_{H,2} & A_1 & A_1 & 16 \\
\hline
0 & 0 & 0 & 1 & 1 & 0
\end{array}
\\
\ \\ \ \\
G_2: &
\begin{array}{cc|c|c|cc}
\lambdav & \dim \Gr_\lambdav &
  \muv : \Gr_\lambdav \subset \Gr_{H,\muv} &
  \pi_H(\cM_{H,\muv}) &
  \cN \cap \pi_H(\cM_{H,\muv}) & \dim  \\
\hline
\omegav_1 & 10 &
   (2,1,1,0) & [3^2 1^2] & G_2(a_1) & 10  \\
 &&          & [3 2^2 1] & \widetilde{A}_1 & 8 \\           
\hline
\omegav_2 & 6 &
   (1,1,0,0) & [2^2 1^4] & A_1 & 6 \\
\hline
0 & 0 & 0 & [1^8] & 1 & 0
\end{array} \\
\ 
\end{array}
\]
\caption{Orbit calculations for types $F_4$ and $G_2$.}\label{tab:FGcalc}
\end{table}

\begin{lem}\label{lem:FG-cnsm}
The image $\cNsm=\pi(\cM)$ is an irreducible closed subset of $\cN$, and the map $\pi: \cM \to \cNsm$ is finite.  For each $\Gr_\lambdav \subset \Grsm$, the image $\pi(\cM_\lambdav)$ consists precisely of the nilpotent orbits listed in Table~\ref{tab:FGcalc}.  Finally, the commutative diagram
\begin{equation}\label{eqn:FG-cart}
\vcenter{\xymatrix{
\cM \ar[r]\ar[d]_{\pi} & \cM_H \ar[d]^{\pi_H} \\
\cNsm \ar[r] & \cN_{H,\sm}
}}
\end{equation}
is cartesian.
\end{lem}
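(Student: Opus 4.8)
The plan is to transport everything to the larger simply-connected group $H$ (of type $E_6$ or $D_4$), for which Proposition~\ref{prop:general}, and hence Theorem~\ref{thm:mn}, are already available. The decisive input is the ``crucial observation'' recorded in step~(2) of the procedure preceding Table~\ref{tab:FGcalc}: for every $\lambdav\in\Lambdavsm^+$ the corresponding coweight $\muv$ --- which is just $\lambdav$ itself, viewed as a $\sigma$-fixed, $H$-dominant coweight --- lies in $\Lambdav_{H,\sm}^+$, and moreover $\Gr_\lambdav=\Gr\cap\Gr_{H,\muv}$ (the latter because the folding inclusion $\Lambdav\hookrightarrow\Lambdav_H$ is injective on dominant coweights). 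From this, $\Grsm\subseteq\Gr_{H,\sm}$; since also $G(\cOm)\subseteq H(\cOm)$ gives $\Grom\subseteq\Gr_{H,0}^-$, we obtain a closed embedding $\cM\hookrightarrow\cM_H$ (using that $\Grsm$ is closed in $\Gr_H$ and that $\Grom$, identified with the kernel of $G(\cOm)\to G$, is a closed subgroup of the analogous kernel for $H$). By Lemma~\ref{lem:commute} applied to $G\hookrightarrow H$, the square formed by $\pip_G$, $\pip_H$ and the inclusions $\Grom\hookrightarrow\Gr_{H,0}^-$, $\fg\hookrightarrow\fh$ commutes, so $\pi=\pi_H|_\cM$. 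Consequently $\cNsm=\pi(\cM)=\pi_H(\cM)$ lies in $\fg\cap\cN_{H,\sm}=\cN\cap\cN_{H,\sm}$; it is irreducible by Lemma~\ref{lem:FG-Mirred}, closed because $\cM$ is closed in $\cM_H$ and $\pi_H$ is proper, and $\pi\colon\cM\to\cNsm$ is finite as a restriction of the finite map $\pi_H$. This settles the first two assertions.

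Next I would prove that the square~\eqref{eqn:FG-cart} is cartesian, which is the heart of the matter. Commutativity was just observed, and since $\cM\hookrightarrow\cM_H$ and $\cNsm\hookrightarrow\cN_{H,\sm}$ are closed embeddings it remains to identify $\cM$ with $\pi_H^{-1}(\cNsm)\subseteq\cM_H$. The inclusion $\cM\subseteq\pi_H^{-1}(\cNsm)$ is immediate. Conversely, let $y\in\cM_H$ with $\pi_H(y)=x\in\cNsm$; since $\cNsm=\pi(\cM)$, choose $x'\in\cM$ with $\pi_H(x')=\pi(x')=x$. By the proof of Theorem~\ref{thm:mn} for $H$, the fibre $\pi_H^{-1}(x)$ is a single orbit of $\iota_H$, so $y\in\{x',\iota_H(x')\}$. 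But on $\Gr_{H,0}^-$ the involution $\iota_H$ preserves $\Grom$ and restricts there to $\iota$ (both are induced by $g\mapsto\theta(g^{-1})$), and $\iota$ preserves $\cM$; hence $\iota_H(x')\in\cM$ and so $y\in\cM$. Thus $\cM=\pi_H^{-1}(\cNsm)$, and the square is cartesian.

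It remains to describe the Reeder pieces. From $\cM_\lambdav\subseteq\cM_{H,\muv}$ together with $\pi=\pi_H|_\cM$ and $\pi(\cM)\subseteq\cN$, we get $\pi(\cM_\lambdav)\subseteq\pi_H(\cM_{H,\muv})\cap\cN$, which is exactly the list of nilpotent orbits in the last two columns of Table~\ref{tab:FGcalc} for the row $\lambdav$: indeed $\pi_H(\cM_{H,\muv})$ is the corresponding Reeder piece of $H$ (known from Proposition~\ref{prop:general} for $H$), and its intersection with $\cN$ is the Dynkin--Kostant computation of step~(4). Summing over $\lambdav$ and consulting \cite[Section~13.4]{carter} for the closure order, the union of all these orbits is the closure $\overline C$ of the unique maximal orbit $C$ in the table; since $\cNsm$ is irreducible and closed and (by the dimension column of the table) $\dim\cNsm=\dim\cM=\dim C$, we conclude $\cNsm=\overline C$. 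The reverse inclusions now follow from the cartesian property: using $\Gr_\lambdav=\Gr\cap\Gr_{H,\muv}$ one checks $\cM_\lambdav=\cM_{H,\muv}\cap\cM$, so for any $x$ in an orbit listed for $\lambdav$ --- whence $x\in\cNsm$ and therefore $\pi_H^{-1}(x)\subseteq\cM$ --- a point of the nonempty set $\pi_H^{-1}(x)\cap\cM_{H,\muv}$ lies in $\cM_\lambdav$, so $x\in\pi(\cM_\lambdav)$. Hence $\pi(\cM_\lambdav)$ is precisely the list in Table~\ref{tab:FGcalc}.

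The main obstacle is the cartesian-square step, and within it the inclusion $\pi_H^{-1}(\cNsm)\subseteq\cM$: it must combine two facts that are easy in isolation --- that the fibres of $\pi_H$ are single $\langle\iota_H\rangle$-orbits (Theorem~\ref{thm:mn} for $H$) and that $\cM$ is $\iota_H$-stable with $\iota_H|_\cM=\iota$ --- in the right way. A secondary nuisance is the low-level bookkeeping ($\Gr_\lambdav=\Gr\cap\Gr_{H,\muv}$, $\cM_\lambdav=\cM_{H,\muv}\cap\cM$, and the closedness of $\Grom$ in $\Gr_{H,0}^-$), all of which ultimately rest on the injectivity of folding on dominant coweights.
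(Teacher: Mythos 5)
Your proof is correct and rests on the same core ingredients as the paper's: the closed embedding $\cM\hookrightarrow\cM_H$ coming from the ``crucial observation'' in step (2) of the procedure, Lemma~\ref{lem:commute} to get $\pi=\pi_H|_\cM$ (hence $\cNsm\subset\fg\cap\cN_{H,\sm}$, $\pi$ finite, $\cNsm$ closed and irreducible), and Theorem~\ref{thm:mn} for $H$ to control fibres of $\pi_H$. The cartesian step in both arguments is essentially the same pointwise fibre-count: $\pi^{-1}(x)$ is a nonempty $\iota$-stable subset of $\pi_H^{-1}(x)$, and the latter is a single $\langle\iota_H\rangle$-orbit with $\iota_H|_{\Grom}=\iota$, forcing equality.

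Where you differ is in the order and in the treatment of the Reeder-piece description. The paper proves the Reeder-piece description \emph{first}: for one-orbit pieces, $G$-stability and nonemptiness of $\pi(\cM_\lambdav)$ suffice; for the two-orbit piece attached to the top $\Gr_\lambdav$, the paper shows $C_1\subset\pi(\cM_\lambdav)$ by a dimension count and then gets $C_2\subset\pi(\cM_\lambdav)$ from closedness of $\pi(\cM)$ together with the exclusion of $C_2$ from the smaller Reeder pieces. Only afterwards does the paper prove cartesianness. You instead prove cartesianness first and then derive the reverse inclusion for the Reeder pieces uniformly from it, using the identification $\cM_\lambdav=\cM_{H,\muv}\cap\cM$ (which you correctly justify by injectivity of folding on dominant coweights, a fact the paper leaves implicit). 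Both routes are valid; yours avoids the small case split on one- vs.\ two-orbit pieces at the cost of needing the equality $\Gr_\lambdav=\Gr\cap\Gr_{H,\muv}$ to be spelled out, while the paper's direct argument is slightly more self-contained at that step.
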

\begin{proof}
The calculations leading to Table~\ref{tab:FGcalc} show that under the embedding $\Grom \hookrightarrow \Gr_{H,0}^-$, we have $\cM \subset \cM_H$.  It follows from Lemma~\ref{lem:commute} that $\pi(\cM)$ is contained in $\fg \cap \cN_H = \cN$.  Moreover, $\pi$ is finite because $\pi_H$ is finite, and therefore $\cNsm=\pi(\cM)$ is closed in $\cN$.
Since $\cM$ is irreducible, $\cNsm$ is also irreducible.  

It also follows from Lemma~\ref{lem:commute} that for each $\lambdav \in \Lambdavsm^+$, the image $\pi(\cM_\lambdav)$ must be contained in the union of the nilpotent orbits listed for $\lambdav$ in Table~\ref{tab:FGcalc}.  The image $\pi(\cM_\lambdav)$ is $G$-stable and nonempty, so in cases where only one nilpotent orbit is listed, 
it is automatic that $\pi(\cM_\lambdav)$ equals that orbit.  It remains to consider the case where $\Gr_\lambdav$ is the largest $\GO$-orbit in $\cM$: this is listed with two nilpotent orbits in each type.  Let us denote these by $C_1$ and $C_2$, with $C_2 \subset \overline{C_1}$ (see \cite[Section 13.4]{carter} for the closure order).  Because $\pi$ is finite, we have $\dim(\pi(\cM_\lambdav)) = \dim \Gr_\lambdav$.  Since $\dim C_1 = \dim \Gr_\lambdav > \dim C_2$, we must have $C_1 \subset \pi(\cM_\lambdav)$.  Since $\pi(\cM)$ is closed, we must also have $C_2 \subset \pi(\cM)$, but we already know that $C_2 \not\subset \pi(\cM_\nuv)$ for any smaller orbit $\Gr_\nuv$.  Thus, $C_2 \subset \pi(\cM_\lambdav)$.

Finally, consider an element $x \in \cNsm$.  We obviously have $\pi^{-1}(x) \subset \pi_H^{-1}(x)$.  Moreover, we know that $\pi^{-1}(x)$ is nonempty, so it is a union of $\Z/2\Z$-orbits.  On the other hand, by Theorem~\ref{thm:mn} for $H$, $\pi_H^{-1}(x)$ contains exactly one $\Z/2\Z$-orbit, so we conclude that $\pi^{-1}(x) = \pi_H^{-1}(x)$.  This means that the diagram~\eqref{eqn:FG-cart} is cartesian.
\end{proof}

\begin{proof}[Proof of Proposition~\ref{prop:general} for types $F_4$ and $G_2$]
We noted in Lemmas~\ref{lem:FG-Mirred} and~\ref{lem:FG-cnsm} that $\cM$ and $\cNsm$ are irre\-duc\-ible.  By Lemma~\ref{lem:FG-cnsm} and inspection of Table~\ref{tab:FGcalc}, we see that the Reeder pieces each consist of one or two nilpotent orbits, that they form a partition of $\cNsm$, and that they are in bijection with the set of $\GO$-orbits in $\Grsm$.  Since every $\GO$-orbit in $\Gr$ is $\iota$-stable, we have established the bijection~\eqref{eqn:gen-bij}.

Suppose $S$ is a Reeder piece consisting of a single nilpotent orbit $C$.  Let $C_H$ denote the 
$H$-saturation of $C$, listed in Table~\ref{tab:FGcalc}.  Referring to Tables~\ref{tab:classcalc} 
and~\ref{tab:excpo} and invoking Proposition~\ref{prop:general} for $H$, we see that in each case, the map $\pi_H^{-1}(C_H) \to C_H$ is an isomorphism.  Because~\eqref{eqn:FG-cart} is cartesian, the map $\pi^{-1}(C) \to C$ is an isomorphism as well.

Next, consider a Reeder piece $S$ containing two nilpotent orbits $C_1$ and $C_2$, with $C_2 \subset \overline{C_1}$.  The reasoning of the preceding paragraph applies verbatim to $C_2$ and shows that $\pi^{-1}(C_2) \to C_2$ is an isomorphism.  On the other hand, for $x \in C_1$, the fibre $\pi_H^{-1}(x)$ always contains two points, so $\pi^{-1}(C_1) \to C_1$ must be a $2$-fold cover.  Since $\pi^{-1}(C_1)$ is a dense subset of a single $\iota$-stable $\GO$-orbit, it must be connected.

Finally, the assertions about the Springer correspondence follow from Reeder's calculations 
of $V_\lambdav^{\Tv}\otimes\epsilon$ in~\cite[Table 5.1]{reeder3}, which have been reproduced 
in Table~\ref{tab:exczw}, and the description of the Springer correspondence in 
\cite[Section 13.3]{carter}. (When comparing Table~\ref{tab:exczw} with Reeder's table, 
note that the identification of the Weyl groups of $G$ and $\Gv$ interchanges
${}'$ and ${}''$ in the labels of irreducible representations of $W$, and that Reeder numbers the nodes
of the Dynkin diagram differently.)
\end{proof}

\begin{rmk}\label{rmk:g2-spr}
Suppose $G$ is of type $G_2$, and let $C$ denote the nilpotent orbit labelled $G_2(a_1)$.  This orbit has one nontrivial rank-$1$ local system $\sigma$.  The pair $(C,\sigma)$ does not occur in the Springer correspondence, so 
for that orbit, the term involving $\IC(\overline{C},\sigma)$ should be omitted from the right-hand side of the formula~\eqref{eqn:zspring-double}.
\end{rmk} 

\section{Consequences}
\label{sect:conseq}

\subsection{Normality and seminormality}
\label{subsect:normal}

The question of which orbit closures in $\cN$ are normal has been completely answered in all types other than $E_7$ and $E_8$ ~\cite{kostant,kp:gc,bs,kraft,broer:f4,broer:dv,sommers}. As a special case of such an orbit closure, $\cNsm$ is known to be normal in all types except the following.
\begin{itemize}
\item In type $D_n$ for odd $n\geq 5$, $\cNsm$ (the closure of the orbit $[3^22^{n-3}]$) is not normal, but is known to be seminormal~\cite{kp:gc}. Recall that a variety is \emph{seminormal} if every bijective map to it is an isomorphism, and that normality implies seminormality. 
\item In type $E_6$, $\cNsm$ (the closure of the orbit $2A_2$) is not normal, and indeed its normalization map is not bijective~\cite[Section 5, (F)]{bs}.
\item In types $E_7$ and $E_8$, $\cNsm$ (the closure of the orbit $A_2+A_1$) is expected to be normal, though this has not been proved~\cite[Remark 7.9]{broer:dv}. The normalization map of $\cNsm$ is known to be bijective: by~\cite[Section 5, (E)]{bs}, this follows from the fact that $\dim\cH_x^{-\dim\cNsm}(\IC(\cNsm))=1$ for all $x\in\cNsm$ (see Table~\ref{tab:Estalk}).
\end{itemize}
  
\begin{conj}\label{conj:E}
If $G$ is of type $E$, the variety $\cNsm$ is seminormal.
\end{conj}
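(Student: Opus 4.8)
The plan is to reduce the conjecture to a local assertion at each $G$-orbit of $\cNsm$, and then to use the description of $\pi$ in Proposition~\ref{prop:general}. Two standard facts drive the reduction: seminormality is preserved by \'etale localization, and for a smooth variety $C$ the product $C\times S$ is seminormal if and only if $S$ is. Since near any $G$-orbit $C\subset\cNsm$ the variety $\cNsm$ is \'etale-locally isomorphic to $C\times S_C$ for a transverse slice $S_C$, and since there are only finitely many orbits, $\cNsm$ is seminormal if and only if each of the slices $S_C$ is.

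First I would identify the normalization $\nu\colon\widetilde{\cNsm}\to\cNsm$. In type $E_6$, the component $\cM_1\subset\cM$ is an open subvariety of the Schubert variety $\overline{\Gr_{3\omegav_1}}$, hence normal, and $\pi|_{\cM_1}$ is finite and birational by Lemma~\ref{lem:E-openfibre}, so $\widetilde{\cNsm}=\cM_1$. In types $E_7$ and $E_8$, $\cM$ itself is open in a normal Schubert variety, so $\cM/(\Z/2\Z)$ is normal and, by Lemma~\ref{lem:E-openfibre}, equals $\widetilde{\cNsm}$, with $\nu$ bijective. Over any orbit $C$ in the complement of the non-normal locus there is nothing to prove, so it remains to treat the (finitely many) orbits $C$ at which $\cNsm$ fails to be normal. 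By Proposition~\ref{prop:general}, the only way $\nu$ can fail to be \emph{injective} over an orbit $C$ is that $C$ is the larger orbit $C_1$ of a two-orbit Reeder piece with $\lambdav=-w_0\lambdav$ --- in type $E_6$ this is the orbit $A_2$, over which $\nu$ restricts to the connected \'etale double cover $\pi^{-1}(C)\to C$, while in types $E_7$ and $E_8$ the $\Z/2\Z$-quotient makes $\nu$ an isomorphism over each individual orbit, so that any remaining failure of $\nu$ to be an isomorphism is infinitesimal along the boundary.

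For a slice $S_C$ of the first kind the core computation is elementary. The slice upstairs $\widetilde{S_C}$ is normal (a transverse slice of a normal variety), and $\widetilde{S_C}\to S_C$ is finite and birational and identifies exactly two distinct points $p\ne q$, both lying over $C\cap S_C$. Writing $A=\C[S_C]\subset\bar A=\C[\widetilde{S_C}]$ (the latter being the integral closure of $A$), any $b\in\bar A$ with $b^{2},b^{3}\in A$ satisfies $b(p)=\pm b(q)$ and $b(p)^{3}=b(q)^{3}$, forcing $b(p)=b(q)$, i.e.\ $b\in A$; hence $S_C$ is seminormal. Phrased globally, what is needed is that the conductor of $\nu$ be a radical ideal in both $\widetilde{\cNsm}$ and $\cNsm$.

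The real obstacle is to establish exactly this. One must show that $S_C$ is genuinely the naive identification of the two points of the normal slice $\widetilde{S_C}$, with no extra scheme-theoretic gluing; equivalently, that the sheaf $\pi_*\mathcal O_\cM/\mathcal O_{\cNsm}$ is reduced, or that every function on $\cM$ which is constant on the fibres of $\pi$ descends to a \emph{regular} function on the orbit closure $\cNsm$. This requires control of the singularities of the affine Grassmannian slice $\cM$ along $\pi^{-1}(C)$ that goes well beyond Proposition~\ref{prop:general}, and in types $E_7$ and $E_8$ it is further complicated by the fact that the non-normal locus of $\cNsm$ is not even known (it is empty precisely if $\cNsm$ is normal, which is expected but unproved, so that the conjecture is immediate in those types once normality is). It is this gap that leaves the statement a conjecture. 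An alternative strategy, which would avoid the slice analysis altogether, would be to exhibit $\cNsm$, or its seminormalization, as the quotient of a normal variety by a finite group --- in the spirit of Lusztig's conjectural description of special pieces (compare Proposition~\ref{prop:lusztig} and Section~\ref{subsect:special}) --- since any such quotient is automatically seminormal.
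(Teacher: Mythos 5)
The statement you are addressing is labelled a \emph{conjecture} in the paper, and the paper offers no proof of it; Conjecture~\ref{conj:E} is used only as a hypothesis (in Proposition~\ref{prop:isom} and Proposition~\ref{prop:lusztig}). Your write-up correctly recognizes this and, rather than claiming a proof, analyzes the obstruction. That analysis is consistent with what the paper actually establishes: your identification of the normalization of $\cNsm$ as $\cM_1$ in type $E_6$ and as $\cM/(\Z/2\Z)$ in types $E_7$, $E_8$ is exactly what falls out of Theorem~\ref{thm:mn}, Lemma~\ref{lem:E-openfibre}, and Proposition~\ref{prop:normalization}; your observation that in types $E_7$ and $E_8$ the conjecture is equivalent to normality of $\cNsm$ (because the normalization is bijective) is stated verbatim in Section~\ref{subsect:normal}; and the fact that the type-$E_6$ normalization fails to be injective over the orbit $A_2$ matches the paper's citation of Beynon--Spaltenstein.

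The gap you isolate is the real one, and it is worth being precise about it. The ``$b^2,b^3\in A\Rightarrow b\in A$'' slice computation shows only that functions separating $p$ from $q$ cannot be weakly integral; it does not show that \emph{every} function on $\widetilde{S_C}$ agreeing at $p$ and $q$ descends to $S_C$, which is the actual content of seminormality and is equivalent to the conductor being reduced on both sides. Nothing in Proposition~\ref{prop:general} or in the finiteness of $\pi$ constrains the conductor ideal, so the argument cannot be closed with the tools the paper develops; this is precisely why it remains a conjecture. One small inaccuracy in your framing: in type $E_6$ the only non-injectivity of $\nu$ is over $A_2$, but the non-normal locus of $\cNsm$ could a priori be larger (normality can fail even where $\nu$ is bijective), so reducing to a single slice at $A_2$ is not automatic without first bounding the non-normal locus. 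Your concluding suggestion --- to realize $\cNsm$ or its seminormalization as a finite quotient of a normal variety, in the spirit of Lusztig's special-pieces conjecture --- is indeed the natural route the paper gestures at via Proposition~\ref{prop:lusztig}, and it is the route that would settle the question.
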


Note that in types $E_7$ and $E_8$, Conjecture~\ref{conj:E} is equivalent to the conjecture that $\cNsm$ is normal. The following is an immediate consequence of Theorem~\ref{thm:mn}. 

\begin{prop}\label{prop:isom}
Assume either that $G$ is not of type $E$, or that Conjecture~\ref{conj:E} holds.  Then the map $\cM/(\Z/2\Z) \to \cNsm$ induced by $\pi$ is an isomorphism of varieties. \qed
\end{prop}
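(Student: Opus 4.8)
The plan is to upgrade the bijection of Theorem~\ref{thm:mn} to an isomorphism, the one extra ingredient being the seminormality of $\cNsm$. Write $\phi\colon\cM/(\Z/2\Z)\to\cNsm$ for the morphism induced by $\pi$; it exists because $\pi\circ\iota=\pi$ by Lemma~\ref{lem:iota-pi}, and it is bijective by Theorem~\ref{thm:mn}. First I would record the formal properties of $\phi$ and of its source. Since $\cM$ is affine, $\cM/(\Z/2\Z)=\Spec\bigl(\C[\cM]^{\Z/2\Z}\bigr)$ is an affine variety; it is reduced because $\C[\cM]$ is, and it is irreducible because, by Proposition~\ref{prop:general}, $\cM$ is either irreducible or has two components interchanged by $\iota$, each of which then maps onto the quotient. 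The morphism $\phi$ is finite: from the inclusions $\C[\cNsm]\subseteq\C[\cM]^{\Z/2\Z}\subseteq\C[\cM]$ with $\C[\cM]$ module-finite over $\C[\cNsm]$ (Theorem~\ref{thm:mn}), the intermediate ring $\C[\cM]^{\Z/2\Z}$ is module-finite over $\C[\cNsm]$ as well.

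Next I would verify that $\phi$ is birational. Over $\C$ this is automatic for a finite bijective morphism of irreducible varieties: over a suitable dense open $U\subseteq\cNsm$ it is finite flat of some constant rank $d$, and since we are in characteristic $0$ the generic fibre is reduced, hence a general closed fibre of $\phi|_U$ consists of exactly $d$ points; bijectivity of $\phi$ then forces $d=1$. One may also read birationality directly off Proposition~\ref{prop:general}, which exhibits a dense open subset of $\cNsm$ over which $\phi$ is an isomorphism, namely the open $G$-orbit $C\subset\cNsm$: if the top Reeder piece $S$ is a single orbit then either $\cM$ is irreducible and $\iota$ acts trivially on the top stratum $\cM_\lambdav\simto C$, or $\iota$ interchanges the two top strata $\cM_\lambdav$ and $\cM_{-w_0\lambdav}$, each of which $\pi$ carries isomorphically onto $C$; while if $S=\{C_1,C_2\}$ with $C_2\subset\overline{C_1}$, then $\Z/2\Z$ acts freely on the \'etale double cover $\pi^{-1}(C_1)\to C_1$ and the quotient maps isomorphically onto $C_1$.

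Finally I would bring in the hypothesis. Under the assumption of the proposition, $\cNsm$ is seminormal: in type~$E$ this is precisely Conjecture~\ref{conj:E}, which is being assumed, while in every other type $\cNsm$ is normal---or, in type~$D_n$ with $n$ odd, at least seminormal---as recalled in Section~\ref{subsect:normal}. Since a finite, birational, bijective morphism from a reduced variety onto a seminormal variety is an isomorphism, it follows that $\phi$ is an isomorphism, which is the assertion. The formal part of this argument is routine, and the only minor points needing attention are that $\cM/(\Z/2\Z)$ is a reduced affine variety and that $\phi$ is finite (not merely quasi-finite); the genuine content lies entirely in the seminormality of $\cNsm$, which outside type~$E$ is a hard input from the literature and inside type~$E$ is precisely the (unproven) Conjecture~\ref{conj:E} that we are assuming.
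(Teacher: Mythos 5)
Your proof is correct and follows the same route the paper has in mind: the paper simply declares the result an immediate consequence of Theorem~\ref{thm:mn}, relying on the working definition of seminormality recalled just above (``every bijective map to it is an isomorphism'') together with the normality/seminormality facts listed in Section~\ref{subsect:normal}. You have filled in the routine checks (that the quotient is a reduced irreducible affine variety, that $\phi$ is finite and birational) that the paper's loose phrasing of seminormality sweeps under the rug but that are genuinely needed if one uses the standard characterization of seminormal varieties.
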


We can also use Theorem~\ref{thm:mn} to construct the normalization of $\cNsm$, or, more generally, of the closure of any Reeder piece.

\begin{prop}\label{prop:normalization}
Let $\lambdav\in\Lambdavsm^+$, and let $C$ be the open orbit in the Reeder piece $\pi(\cM_\lambdav)$.
\begin{enumerate}
\item If $\lambdav\neq-w_0\lambdav$, then the map $\overline{\Gr_\lambdav} \cap \Grom\to\overline{C}$ induced by $\pi$ is the normalization map of $\overline{C}$.
\item If $\lambdav=-w_0\lambdav$, then the bijection $(\overline{\Gr_\lambdav} \cap \Grom)/(\Z/2\Z)\to\overline{C}$ induced by $\pi$ is the normalization map of $\overline{C}$.
\end{enumerate}
\end{prop}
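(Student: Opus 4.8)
The plan is to combine the finiteness of $\pi$ from Theorem~\ref{thm:mn} with a smoothness statement about the source $\overline{\Gr_\lambdav}\cap\Grom$ and the uniqueness of normalizations. First I would recall that for $\lambdav\in\Lambdavsm^+$, the orbit $\Gr_\lambdav\cap\Grom$ is smooth (being a $\GO$-orbit, hence a smooth $G$-variety), and more importantly that $\overline{\Gr_\lambdav}\cap\Grom$ is \emph{normal}: it is a transverse slice inside the normal variety $\overline{\Gr_\lambdav}$ (normality of affine Grassmannian orbit closures is classical, going back to Mirković–Vilonen via the geometric Satake setup, or to Faltings), and intersecting a normal variety with the opposite Bruhat cell $\Grom$ preserves normality. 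I would cite whichever of these is cleanest; the key input is simply that $Z:=\overline{\Gr_\lambdav}\cap\Grom$ is a normal affine variety.

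\medskip

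\noindent\textbf{Case (1): $\lambdav\neq-w_0\lambdav$.} By Proposition~\ref{prop:general}(1), the Reeder piece $\pi(\cM_\lambdav)$ is a single orbit $C$, and $\pi$ restricts to an isomorphism $\cM_\lambdav\overset{\sim}{\to}C$. The map $\pi|_Z: Z\to\overline{C}$ is the restriction of the finite map $\pi:\cM\to\cNsm$ (Theorem~\ref{thm:mn}) to the $\iota$-stable closed subset $Z\subset\cM$, hence is finite; it is dominant because $\pi(\cM_\lambdav)=C$ is dense in $\overline{C}$. So $\pi|_Z$ is a finite dominant map from a normal variety that is birational (it is an isomorphism over the dense open $C$). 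A finite birational morphism from a normal variety is, by definition of the normalization, the normalization map of the target. That gives (1).

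\medskip

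\noindent\textbf{Case (2): $\lambdav=-w_0\lambdav$.} Now $\iota$ acts on $Z$ (since $\Gr_\lambdav$ is $\iota$-stable), and by Theorem~\ref{thm:mn} every fibre of $\pi|_Z$ is a single $\Z/2\Z$-orbit, so $\pi|_Z$ factors through a finite bijective morphism $\bar\pi: Z/(\Z/2\Z)\to\overline{C}$. The quotient $Z/(\Z/2\Z)$ of a normal affine variety by a finite group is again normal (invariants of an integrally closed domain in its fraction field, under a finite group, are integrally closed). Over the dense open orbit $C$: in case~\eqref{it:gen-single} the $\Z/2\Z$-action on $\pi^{-1}(C)$ is already covered by case~(1) above; in case~\eqref{it:gen-double} the subtlety is that $\pi^{-1}(C_1)\to C_1$ is a genuine $2$-fold étale cover with free $\Z/2\Z$-action, so $\pi^{-1}(C_1)/(\Z/2\Z)\to C_1$ is an isomorphism — hence $\bar\pi$ is birational. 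A finite birational morphism onto $\overline{C}$ from the normal variety $Z/(\Z/2\Z)$ is the normalization map, proving (2).

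\medskip

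\noindent The step I expect to be the main obstacle is pinning down a clean, citable statement that $\overline{\Gr_\lambdav}\cap\Grom$ is normal: normality of Schubert-type varieties in the affine Grassmannian is standard, but transferring it to the intersection with the opposite cell $\Grom$ requires knowing that $\overline{\Gr_\lambdav}$ meets $\Grom$ transversally (or at least that the intersection is a normal slice), which is exactly the kind of statement one uses in the MV-cycle / transverse-slice literature. Once that normality input is in hand, everything else is the formal "finite + birational + normal source $\Rightarrow$ normalization" argument, together with the elementary fact that the $\Z/2\Z$-quotient of a normal affine variety is normal. The only case-specific bookkeeping is confirming birationality of $\bar\pi$ in case~\eqref{it:gen-double}, which is immediate from the description of $\pi^{-1}(C_1)$ in Proposition~\ref{prop:general}.
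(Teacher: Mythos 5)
Your overall strategy — finiteness of $\pi$ plus birationality over the open orbit plus normality of the source, then invoke uniqueness of the normalization (and, in case (2), that $\Z/2\Z$-invariants of a normal domain are normal) — is precisely the paper's argument, so the proof is correct in substance. However, the step you flag as the ``main obstacle'' is in fact trivial, and you have misdiagnosed its nature: $\Grom$ is the \emph{open dense} big cell in (the connected component of) $\Gr$, as the paper records (``$\Gr_\lambdav\cap\Grom$ is open dense in $\Gr_\lambdav$''; ``$\cM$ is a $G$-stable affine open dense subvariety of $\Grsm$''). Hence $\overline{\Gr_\lambdav}\cap\Grom$ is simply an open subset of the normal affine Schubert variety $\overline{\Gr_\lambdav}$, and normality is inherited by open subsets with no transversality input whatsoever. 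Your framing of $\Grom$ as a transverse slice is off: the genuine transverse slices in the affine Grassmannian are built from $\fG$ (the first congruence subgroup inside $\GOm$) or from $\overline{\Gr^-_{\muv}}$ with $\muv\ne 0$, not from $\Grom=\Gr^-_0$ itself, which passes through the minimal (point) orbit $\{\bo\}$ and is just an open neighbourhood of it. Dropping the transversality discussion and quoting openness directly would make your proof match the paper's and eliminate the worry you raised. One small additional remark: in case (2) you justify $\iota$-stability by saying ``$\Gr_\lambdav$ is $\iota$-stable,'' but you actually need $\iota$-stability of the whole closure $\overline{\Gr_\lambdav}\cap\Grom$; this follows from Lemma~\ref{lem:iota-dual} together with the fact that $\nuv\mapsto-w_0\nuv$ preserves the order ideal $\{\nuv\le\lambdav\}$ when $\lambdav=-w_0\lambdav$, which is what the paper cites.
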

\begin{proof}
The variety $\overline{\Gr_\lambdav} \cap \Grom$ is normal, because it is an open subset of the affine Schubert variety $\overline{\Gr_\lambdav}$. In case (1), we know from Proposition~\ref{prop:general} that $\pi$ induces an isomorphism from $\cM_\lambdav$ to $C$; since $\pi$ is finite, the claim follows. In case (2), we know from Lemma~\ref{lem:iota-dual} that $\overline{\Gr_\lambdav} \cap \Grom$ is $\iota$-stable. Since quotients by finite group actions preserve normality, the claim follows.
\end{proof}

We can deduce a sort of converse to Proposition~\ref{prop:isom} in types $E_7$ and $E_8$.

\begin{cor}\label{cor:normal}
Suppose that $G$ is of type $E_7$ or $E_8$. If the map $\cM/(\Z/2\Z) \to \cNsm$ induced by $\pi$ is an isomorphism, then the closure of every Reeder piece is normal.
\end{cor}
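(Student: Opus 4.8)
The plan is to bootstrap from the normality of $\cNsm$ itself---which is exactly what the hypothesis says in this situation---to the normality of every Reeder piece closure, by combining Proposition~\ref{prop:normalization}(2) with a finite-quotient argument. Recall that in type $E_7$ or $E_8$ we have $\lambdav = -w_0\lambdav$ for all $\lambdav \in \Lambdav$, so every $\GO$-orbit is $\iota$-stable; moreover $\Lambdavsm^+$ has a unique maximal element $\lambdav_0$, so that $\Grsm = \overline{\Gr_{\lambdav_0}}$ and $\cM = \overline{\Gr_{\lambdav_0}} \cap \Grom$. Applying Proposition~\ref{prop:normalization}(2) with $\lambdav_0$ shows that $\cM/(\Z/2\Z) \to \cNsm$ is the normalization map of $\cNsm$; hence the hypothesis is equivalent to the assertion that $\cNsm$ is a normal variety, and the task is to propagate this to the closures of the smaller Reeder pieces.

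Fix an arbitrary $\lambdav \in \Lambdavsm^+$, let $C$ be the open orbit of the Reeder piece $\pi(\cM_\lambdav)$, and set $Z = \overline{\Gr_\lambdav} \cap \Grom$. First I would observe that $Z$ is a $\Z/2\Z$-stable closed affine subvariety of $\cM$: it is closed in $\Grsm \cap \Grom = \cM$ because $\overline{\Gr_\lambdav}$ is closed in $\Grsm$; it is affine because $\cM$ is; it is normal because it is open in the affine Schubert variety $\overline{\Gr_\lambdav}$; and it is $\iota$-stable because $\lambdav = -w_0\lambdav$, so that $\cM_\lambdav$ is $\iota$-stable by Lemma~\ref{lem:iota-dual} and $Z$ is the closure of $\cM_\lambdav$ inside the $\iota$-stable variety $\Grom$. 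Proposition~\ref{prop:normalization}(2) tells us that the map $\nu\colon Z/(\Z/2\Z) \to \overline{C}$ induced by $\pi$ is the normalization of $\overline{C}$; since $Z/(\Z/2\Z)$ is a quotient of the normal variety $Z$ by a finite group it is itself normal, and $\overline{C}$ is normal if and only if $\nu$ is an isomorphism.

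The crux is that the $\Z/2\Z$-equivariant closed embedding $Z \hookrightarrow \cM$ descends to a \emph{closed} embedding $Z/(\Z/2\Z) \hookrightarrow \cM/(\Z/2\Z)$. Since all the varieties in sight are affine, this reduces to the standard fact that, over a field of characteristic $0$, if a finite group $\Gamma$ acts on a commutative ring $A$ and $I \subseteq A$ is a $\Gamma$-stable ideal, then the natural map $A^\Gamma \to (A/I)^\Gamma$ is surjective: a $\Gamma$-invariant residue class lifts to some $a \in A$, and $\frac{1}{|\Gamma|}\sum_{\gamma \in \Gamma} \gamma \cdot a$ is a $\Gamma$-invariant lift of it. Composing this closed embedding with the isomorphism $\cM/(\Z/2\Z) \to \cNsm$ given by the hypothesis, and noting that the composite is the morphism induced by $\pi|_Z \colon Z \to \cNsm$, hence factors through $\nu$ and the closed embedding $\overline{C} \hookrightarrow \cNsm$, we conclude that $\nu$ identifies the reduced variety $Z/(\Z/2\Z)$ with a reduced closed subvariety of $\cNsm$ having underlying set $\overline{C}$, i.e. with $\overline{C}$ itself. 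Thus $\nu$ is an isomorphism and $\overline{C}$ is normal; as $\lambdav$ was arbitrary, the closure of every Reeder piece is normal. The only point that goes beyond bookkeeping is the descent of the closed embedding to the quotient, and even that is merely the exactness of the invariants functor in characteristic $0$; everything else is read off from Propositions~\ref{prop:general} and~\ref{prop:normalization}.
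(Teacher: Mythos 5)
Your proof is correct, and since the paper gives no explicit argument for Corollary~\ref{cor:normal}, you have filled in exactly what the authors must have had in mind. The key observation you supply — that for a finite group $\Gamma$ acting in characteristic $0$, a $\Gamma$-stable closed embedding $Z \hookrightarrow \cM$ descends to a closed embedding $Z/\Gamma \hookrightarrow \cM/\Gamma$ because the Reynolds operator makes $\C[\cM]^\Gamma \to \C[Z]^\Gamma$ surjective — is the one piece of algebra that is genuinely needed beyond quoting Proposition~\ref{prop:normalization}(2), and you identify and justify it correctly. The rest (that $\lambdav = -w_0\lambdav$ always holds in types $E_7$, $E_8$, that $Z$ is $\iota$-stable, normal, affine, and closed in $\cM$, and that $\pi(Z) = \overline C$ because $\cM_\lambdav$ is dense in $Z$ and $\pi$ is finite) is straightforward bookkeeping, and you carry it out properly. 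One minor stylistic point: you do not actually need to re-argue the $\iota$-stability of $Z$ by the closure argument, since the paper already records this fact inside the proof of Proposition~\ref{prop:normalization}(2).
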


Note that, apart from $\cNsm$ itself, the only closure of a Reeder piece in these types which is not known to be normal is the closure of $A_2$.

\subsection{Special pieces}
\label{subsect:special}

Recall that a \emph{special piece} is a subset of $\cN$ obtained by taking the closure of a special nilpotent orbit and deleting from it the closures of all strictly smaller special nilpotent orbits.  The special pieces are locally closed and form a partition of $\cN$. See \cite[Section 13.4]{carter} for details of the special pieces in each type.

Lusztig has conjectured~\cite[Section 0.6]{lusztig:notes} that for each special piece $S$, there is a smooth variety $\tilde S$ with commuting actions of $G$ and a specified finite group $A_S$, as well as a $G$-equivariant isomorphism $\tilde S/A_S \cong S$.  The conjecture also proposes a specific relationship between $G$-orbits in $S$ and stabilizers in $A_S$: in the case where $S$ consists of two orbits $C_1$ and $C_2$ with $C_1$ special, the group $A_S$ is $\Z/2\Z$, and the claim is that $C_2$ is the image of the $(\Z/2\Z)$-fixed subvariety of $\tilde S$.

Lusztig formulated his conjecture only for the exceptional types, because in the classical types the result was known by the work of Kraft--Procesi~\cite{kp:sd}. In unpublished work, Lusztig has verified the conjecture in type $G_2$. If a variety $\tilde S$ satisfying the conditions in Lusztig's conjecture exists, it is known to be unique~\cite{as}.

Now in the simply-laced types, every Reeder piece is a special piece (by inspection of Tables~\ref{tab:classcalc} and~\ref{tab:excpo}). For a Reeder piece $\pi(\cM_\lambdav)$ consisting of two orbits, Proposition~\ref{prop:general} supplies a smooth variety $\cM_\lambdav$ with commuting actions of $G$ and $\Z/2\Z$, and a $G$-equivariant bijection $\cM_\lambdav/(\Z/2\Z)\to \pi(\cM_\lambdav)$ in which the smaller orbit is the image of the $(\Z/2\Z)$-fixed subvariety $\cM_\lambdav^\iota$. If this bijection is an isomorphism, then Lusztig's conjecture is verified.

\begin{prop}\label{prop:lusztig}
Lusztig's conjecture holds for the smallest nontrivial special piece in type $E_6$, namely, that with special orbit $A_2$. If $G$ is of type $E_7$ or $E_8$ and Conjecture~\ref{conj:E} holds, then Lusztig's conjecture holds for the special pieces with special orbits $A_2+A_1$ and $A_2$.
\end{prop}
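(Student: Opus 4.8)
The plan is to reduce the proposition to Proposition~\ref{prop:normalization}. Recall from the discussion preceding the statement that in a simply-laced type every Reeder piece is a special piece, and that for a two-orbit Reeder piece $S = C_1 \sqcup C_2$ (with $C_1$ the open, special orbit and $C_2 \subset \overline{C_1}$), case~(2) of Proposition~\ref{prop:general} already furnishes a \emph{smooth} variety $\tilde S = \cM_\lambdav$ --- it is open in the $\GO$-orbit $\Gr_\lambdav$ --- carrying commuting actions of $G$ and $A_S = \Z/2\Z$, together with a $G$-equivariant bijection $\cM_\lambdav/(\Z/2\Z) \to S$ under which $C_2$ is the image of the fixed-point subvariety $\cM_\lambdav^\iota$. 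Thus the entire content of Lusztig's conjecture in these cases is the single assertion that this bijection is an \emph{isomorphism}; its $G$-equivariance and the behaviour of the fixed locus are then automatic from Proposition~\ref{prop:general} and the $G$-equivariance of $\pi$.

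First I would identify that bijection geometrically. A two-orbit Reeder piece occurs only when $\lambdav = -w_0\lambdav$, so Proposition~\ref{prop:normalization}(2) applies and shows that $(\overline{\Gr_\lambdav}\cap\Grom)/(\Z/2\Z) \to \overline{C_1}$ is the normalization map. Restricting over the open subset $S \subset \overline{C_1}$, and using the bijection~\eqref{eqn:gen-bij} to see that no $\cM_\nuv$ with $\nuv \in \Lambdavsm^+$, $\nuv < \lambdav$, can map into $S$ (so that $\pi^{-1}(S)\cap(\overline{\Gr_\lambdav}\cap\Grom) = \cM_\lambdav$), one concludes that $\cM_\lambdav/(\Z/2\Z) \to S$ is precisely the normalization map of $S$. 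Since its source is a quotient of a smooth variety by a finite group, hence normal, the map is an isomorphism exactly when $S$ is normal; and as $S$ is open dense in $\overline{C_1}$, it suffices to know that $\overline{C_1}$ is normal.

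It then remains to supply this normality in each case. In type $E_6$ the only nontrivial Reeder piece is $\pi(\cM_{\omegav_4}) = \{A_2, 3A_1\}$, with $\omegav_4 = -w_0\omegav_4$; here $\overline{C_1} = \overline{A_2}$ is known to be normal, this being part of the classification of normal nilpotent orbit closures, which is complete outside types $E_7$ and $E_8$ (see the references in Section~\ref{subsect:normal}). Hence $S$ is normal and the bijection is an isomorphism. For types $E_7$ and $E_8$ the two nontrivial Reeder pieces have open orbits $A_2+A_1$ and $A_2$; assuming Conjecture~\ref{conj:E}, Proposition~\ref{prop:isom} gives $\cM/(\Z/2\Z) \cong \cNsm$, and then Corollary~\ref{cor:normal} shows that the closure of every Reeder piece is normal, in particular $\overline{A_2+A_1}$ and $\overline{A_2}$. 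So once more $S$ is normal and the bijection is an isomorphism, verifying Lusztig's conjecture for these special pieces.

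The main obstacle is exactly the normality of $S$, equivalently of the orbit closures $\overline{A_2}$ and $\overline{A_2+A_1}$: this is classical in type $E_6$, but in types $E_7$ and $E_8$ it is precisely what is open, which is why the result there is conditional. Via Corollary~\ref{cor:normal} it is implied by --- and, for $\cNsm=\overline{A_2+A_1}$, essentially equivalent to --- the seminormality of $\cNsm$ predicted by Conjecture~\ref{conj:E}, so no input beyond that conjecture should be needed. The only routine checks to write out carefully are the three ingredients above: that $\cM_\lambdav$ is smooth (it is open in a $\GO$-orbit), that $\pi^{-1}(S)\cap(\overline{\Gr_\lambdav}\cap\Grom)=\cM_\lambdav$ (disjointness of distinct Reeder pieces), and that the restriction of a normalization morphism to an open subset of its target is again a normalization.
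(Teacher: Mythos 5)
Your proposal is correct and follows essentially the same strategy as the paper: identify the map $\cM_\lambdav/(\Z/2\Z)\to S$ as a finite bijection from a normal variety, reduce to normality of $S$, and then supply that normality from Sommers in type $E_6$ and from Conjecture~\ref{conj:E} via Proposition~\ref{prop:isom} in types $E_7,E_8$. Your added observation that the bijection is precisely the normalization map of $S$ (obtained by restricting Proposition~\ref{prop:normalization}(2) over the open subset $S\subset\overline{C_1}$, using disjointness of Reeder pieces) makes explicit the implicit step in the paper's one-line ``Hence the bijection is an isomorphism''; the only minor detour is that in the $E_7,E_8$ case the paper restricts the isomorphism $\cM/(\Z/2\Z)\cong\cNsm$ of Proposition~\ref{prop:isom} directly to the locally closed piece $S$, whereas you pass through Corollary~\ref{cor:normal} first, which is harmless but one step longer.
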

\begin{proof}
In type $E_6$, the closure of the orbit $A_2$ is normal~\cite{sommers}, so the special piece $S$ containing $A_2$ is also normal. Hence the bijection $\cM_{\omegav_4}/(\Z/2\Z)\to S$ is an isomorphism, as required. If $G$ is of type $E_7$ or $E_8$ and Conjecture~\ref{conj:E} holds, then the analogous bijection is an isomorphism by Proposition~\ref{prop:isom}.
\end{proof}

When $G$ is of non-simply-laced type, the Reeder pieces are the intersections with $\cNsm$ of Reeder pieces for the simply-laced group $H$ from which $G$ is obtained by `folding': see Lemma~\ref{lem:FG-cnsm} (the analogous result in types $B$ and $C$ also holds). Thus, the Reeder pieces for $G$ are related to special pieces for $H$ rather than to special pieces for $G$. For example, in type $F_4$ the nontrivial Reeder piece consists of two special orbits, whereas the two orbits in the smallest nontrivial special piece, $\widetilde A_1$ and $A_1$, are in separate Reeder pieces. 

\subsection{Relation to Reeder's results}
\label{subsect:reeder}

By now, it will be clear that many results of the present paper were inspired by 
Reeder's work~\cite{reeder1, reeder2, reeder3}, and in particular his
explicit computations of $V^{\Tv}$ for $V$ small. Here, we briefly explain how some of Reeder's other results fit into the context of the present paper.

\subsubsection{Big orbits and subdual orbits}

Inside the nilpotent cone $\cNv$ for $\Gv$, there is a unique maximal open subset consisting of special nilpotent $\Gv$-orbits.  Orbits contained in this set are said to be \emph{big}.  In the simply-laced types, Reeder proved that for a small representation $V$ of $\Gv$, the Weyl group action on $V^\Tv$ can be described in terms of Springer representations attached to some big nilpotent orbit~\cite{reeder1}.

This result is related to Theorem~\ref{thm:sss} by \emph{Spaltenstein duality}, an order-reversing map from nilpotent orbits in $\cNv$ to special nilpotent orbits in $\cN$.  The following facts can be verified by case-by-case calculations:
\begin{enumerate}
\item The Spaltenstein dual of a big orbit is the open orbit in some Reeder piece.  Moreover, every open orbit of a Reeder piece arises in this way.
\item Let $\check C$ be a big orbit in $\cNv$, and let $C \subset \cN$ be its Spaltenstein dual.  Then
\[
\bigoplus_{\substack{\text{$\check E$ a local}\\ \text{system on $\check C$}}} \Spr(\IC(\check C,\check E)) \cong \epsilon \otimes
\bigoplus_{\substack{\text{$E$ a local}\\ \text{system on $C$}}} \Spr(\IC(C,E)).
\]
\end{enumerate}
Using these facts, Reeder's main result in~\cite{reeder1} can be deduced from Theorem~\ref{thm:sss}.

The definition of `big orbit' still makes sense in the non-simply-laced types, but it is no longer such a well-behaved notion.  There are big orbits (for example, $F_4(a_3)$ in type $F_4$) whose Spaltenstein dual is not even contained in $\cNsm$; conversely, there are open orbits of Reeder pieces (e.g., the orbit $A_1$ in type $G_2$) that are not special, and so cannot be the Spaltenstein dual of anything.  It would be interesting to see whether the generalizations of Spaltenstein duality in~\cite{a:ord} or~\cite{sommers:lcq}, combined with some modification of the definition of `big', would allow the result of~\cite{reeder1} to be generalized to the non-simply-laced case.

\subsubsection{Small weighted Dynkin diagrams}

A nilpotent orbit $C\subset\cN$ is determined by its `weighted Dynkin diagram', which we can think of
as the unique coweight $\nuv(C)\in\Lambdav^+$ such that
$\nuv(C):\C^\times\to T$ extends to a homomorphism $\varphi_{\nuv(C)}: SL_2 \to G$ with 
$d\varphi_{\nuv(C)}([\begin{smallmatrix} 0 & 1 \\ 0 & 0 \end{smallmatrix}]) \in C$. For example, 
if $C_\mn$ is the minimal (nonzero) nilpotent orbit, then $\nuv(C_\mn)$ is the highest short
coroot $\alphav_0$. It is easy to check using \cite[Section 13.1]{carter} that $\lambdav\in\Lambdavsm^+$ arises as $\nuv(C)$ if and only if $\lambdav=-w_0\lambdav$. Reeder made this observation for the simply-laced types in~\cite[Proposition 3.2]{reeder3}, of which we can give the following generalization.

\begin{prop}\label{prop:attach}
Let $\lambdav\in\Lambdavsm^+$ be such that $\lambdav=-w_0\lambdav$, and let $C\subset\cN$ be such that
$\lambdav=\nuv(C)$. Then $C=\pi(\cM_{\lambdav}^\iota)$, and $\pi$ restricts to an isomorphism
$\cM_\lambdav^\iota\simto C$. In particular, $\pi$ restricts to an isomorphism $\cM_{\alphav_0}\simto C_\mn$.
\end{prop}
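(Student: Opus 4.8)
The plan is to combine Proposition~\ref{prop:general} with an explicit $\iota$-fixed point built from a Dynkin--Kostant $SL_2$-homomorphism, much as in the proof of Lemma~\ref{lem:minnotsmall}.

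First I would record what Proposition~\ref{prop:general} already supplies. Since $\lambdav=-w_0\lambdav$, the union $\Gr_\lambdav\cup\Gr_{-w_0\lambdav}$ is just $\Gr_\lambdav$, so the Reeder piece $\pi(\cM_\lambdav)$ falls under case~\eqref{it:gen-single} or case~\eqref{it:gen-double}. In case~\eqref{it:gen-single}, $\pi|_{\cM_\lambdav}$ is injective, hence by Lemma~\ref{lem:iota-pi} the map $\iota$ is the identity on $\cM_\lambdav$, so $\cM_\lambdav^\iota=\cM_\lambdav$ and $\pi$ restricts to an isomorphism $\cM_\lambdav^\iota\simto C$. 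In case~\eqref{it:gen-double}, Proposition~\ref{prop:general} states directly that $\pi$ restricts to an isomorphism from $\cM_\lambdav^\iota$ onto $C_2$. In either case $\pi(\cM_\lambdav^\iota)$ is a single nilpotent orbit $C'$ and $\pi|_{\cM_\lambdav^\iota}\colon\cM_\lambdav^\iota\simto C'$, so the remaining task is to show $C'=C$.

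To identify $C'$, I would exhibit a single point of $\cM_\lambdav^\iota$ lying over $C$. By Dynkin--Kostant theory there is a homomorphism $\varphi\colon SL_2\to G$ attached to $C$, with $d\varphi\bigl([\begin{smallmatrix}0&1\\0&0\end{smallmatrix}]\bigr)=e\in C$ and with $\varphi$ restricted to the diagonal torus equal to the cocharacter $\nuv(C)=\lambdav$; in particular $\varphi\bigl([\begin{smallmatrix}t&0\\0&t^{-1}\end{smallmatrix}]\bigr)=\cw_\lambdav$. Put $g=\varphi\bigl([\begin{smallmatrix}1&t^{-1}\\0&1\end{smallmatrix}]\bigr)$. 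Then $g$ is a polynomial in $t^{-1}$ reducing to the identity at $t^{-1}=0$, so $g\in\fG$; the identity $[\begin{smallmatrix}1&t^{-1}\\0&1\end{smallmatrix}]\in SL_2(\cO)[\begin{smallmatrix}t&0\\0&t^{-1}\end{smallmatrix}]SL_2(\cO)$ gives, on applying $\varphi$, that $g\in\GO\cw_\lambdav\GO$, so $g\cdot\bo\in\Gr_\lambdav\cap\Grom=\cM_\lambdav$; since $\theta$ commutes with $\varphi$ and $\theta\bigl([\begin{smallmatrix}1&t^{-1}\\0&1\end{smallmatrix}]\bigr)=[\begin{smallmatrix}1&t^{-1}\\0&1\end{smallmatrix}]^{-1}$, we get $\iota(g)=\theta(g^{-1})=g$, so $g\cdot\bo\in\cM_\lambdav^\iota$; and by Lemma~\ref{lem:commute} applied to $\varphi$,
\[
\pi(g\cdot\bo)=\pip(g\cdot\bo)=d\varphi\Bigl(\pip_{SL_2}\bigl([\begin{smallmatrix}1&t^{-1}\\0&1\end{smallmatrix}]\cdot\bo\bigr)\Bigr)=d\varphi\bigl([\begin{smallmatrix}0&1\\0&0\end{smallmatrix}]\bigr)=e\in C.
\]
Hence $C'\cap C\neq\varnothing$, and as both are $G$-orbits, $C'=C$, which proves the main assertion. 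For the statement about $\alphav_0$, recall that $\alphav_0=\nuv(C_\mn)$ and $\alphav_0=-w_0\alphav_0$, so the main assertion gives $\cM_{\alphav_0}^\iota\simto C_\mn$; I would then rule out case~\eqref{it:gen-double} for $\lambdav=\alphav_0$ by a dimension count, noting that case~\eqref{it:gen-double} would force a second orbit $C_1$ with $C_\mn\subsetneq\overline{C_1}$ and yet $\dim C_1=\dim\Gr_{\alphav_0}=\langle 2\rho,\alphav_0\rangle=\dim C_\mn$ (using that $\cM_{\alphav_0}$ is dense in $\Gr_{\alphav_0}$, that $\pi$ is finite, and the standard equality $\dim C_\mn=2h^\vee-2=\langle 2\rho,\alphav_0\rangle$), a contradiction. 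Therefore case~\eqref{it:gen-single} holds, $\cM_{\alphav_0}^\iota=\cM_{\alphav_0}$, and $\pi$ restricts to an isomorphism $\cM_{\alphav_0}\simto C_\mn$.

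The step I expect to be the main obstacle is the elementary-divisor verification that $[\begin{smallmatrix}1&t^{-1}\\0&1\end{smallmatrix}]\cdot\bo$ lies in the $SL_2(\cO)$-orbit labelled by the coweight $\alphav_0$ of $SL_2$, together with carefully tracking the identification $\varphi\bigl([\begin{smallmatrix}t&0\\0&t^{-1}\end{smallmatrix}]\bigr)=\cw_\lambdav$ so that the reduction to $SL_2$ produces a point of $\Gr_\lambdav$ and not of some larger $\GO$-orbit; this is routine, but in the same vein as the computation in the proof of Lemma~\ref{lem:minnotsmall}.
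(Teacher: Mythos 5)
Your argument is correct and takes essentially the same route as the paper: the key step in both is to apply Lemma~\ref{lem:commute} to the Dynkin--Kostant homomorphism $\varphi_{\nuv(C)}\colon SL_2\to G$ to produce a point of $\cM_\lambdav^\iota$ lying over $C$, and then to invoke the structural description of $\pi|_{\cM_\lambdav^\iota}$ from Proposition~\ref{prop:general}. You have merely spelled out what the paper leaves implicit, namely the explicit $\iota$-fixed element $\varphi\bigl([\begin{smallmatrix}1&t^{-1}\\0&1\end{smallmatrix}]\bigr)\cdot\bo$ together with the elementary-divisor check that it lies in $\Gr_\lambdav$ and the computation $\pip(g\cdot\bo)=e$. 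All of these checks are accurate, including the matrix factorisation showing $[\begin{smallmatrix}1&t^{-1}\\0&1\end{smallmatrix}]\in SL_2(\cO)\cw_2 SL_2(\cO)$ and the verification $\iota(g)=\theta(g^{-1})=g$.

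The one place where you diverge from the paper is the final sentence about $\alphav_0$. The paper simply reads off from Tables~\ref{tab:classcalc} and~\ref{tab:excpo} that the Reeder piece attached to $\alphav_0$ is the single orbit $C_\mn$, so $\cM_{\alphav_0}^\iota=\cM_{\alphav_0}$. You instead give a case-free dimension argument: in case~\eqref{it:gen-double} the open orbit $C_1$ would satisfy $\dim C_1=\dim\Gr_{\alphav_0}=\langle 2\rho,\alphav_0\rangle=2h^\vee-2=\dim C_\mn$ while strictly containing $C_\mn$ in its closure, which is impossible. This is a genuine improvement in that it avoids consulting the tables; it relies only on the standard identity $\dim C_\mn=2h^\vee-2=\langle 2\rho,\alphav_0\rangle$, which you correctly cite. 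Either way the conclusion is the same, and your proof is sound.
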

\begin{proof}
Applying Lemma~\ref{lem:commute} to the homomorphism $\varphi_{\nuv(C)}:SL_2\to G$, we see that
$\pi(\cM_\lambdav^\iota)$ meets $C$. But by Proposition~\ref{prop:general}, $\cM_\lambdav^\iota$ is a single
$G$-orbit and $\pi$ restricted to $\cM_\lambdav^\iota$ is injective, so $\pi$ must map $\cM_\lambdav^\iota$
isomorphically onto $C$. In case $\lambdav=\alphav_0$, we have $\cM_{\alphav_0}=\cM_{\alphav_0}^\iota$ because the
corresponding Reeder piece consists solely of $C_\mn$, as shown in Tables~\ref{tab:classcalc} and \ref{tab:excpo}. 
\end{proof}

\subsubsection{Subdual orbits}

Returning to the simply-laced case, Reeder introduces a notion of \emph{subdual orbit} in~\cite{reeder2}, defining it in terms of big orbits and Spaltenstein duality.  His definition is easily seen to be equivalent to the following: the subdual orbit corresponding to a small representation $V_\lambdav$ is the unique smallest nilpotent orbit in the Reeder piece $\pi(\cM_\lambdav)$.  Thus, if $\lambdav=-w_0\lambdav$ (that is, $V_\lambdav$ is self-dual), the subdual orbit is $\pi(\cM_\lambdav^\iota)$.

Reeder's results about subdual orbits are quite different in nature from the other results mentioned above.  They involve regular functions on nilpotent orbits, and so, implicitly, coherent sheaves, rather than perverse sheaves.  The authors do not know how to understand these results in the context of the present paper. Investigating the behaviour of coherent sheaves under the functor $\pi_*$ may be a fruitful avenue for future inquiry.

\subsection{Broer's covariant theorem}
\label{subsect:broer}

Let $\fgv$ denote the Lie algebra of $\Gv$, let $\ftv \subset \fgv$ denote the Lie algebra of $\Tv$, and let $\Coinv(W)$ denote the coinvariant ring of $W$.  Chevalley's restriction theorem states that the inclusion $\ftv \to \fgv$ induces an isomorphism of graded rings $\C[\fgv]^\Gv \to \C[\ftv]^W$.  Here, and below, $\C[X]$ denotes the ring of regular functions on $X$.  For convenience, we regard the gradings on polynomial rings like $\C[\fgv]$ and $\C[\ftv]$, as well as on $\Coinv(W)$, as being concentrated in even degrees.  All these rings are generated by their homogeneous elements of degree~$2$.

The following remarkable theorem of Broer generalizes Chevalley's restriction theorem.  Below, we explain how to deduce Broer's theorem from Theorem~\ref{thm:sss} (or rather, the case-free version given in Remark~\ref{rmk:sss-g2}). The problem of finding such a geometric approach to Broer's theorem was raised by Ginzburg, and was the main motivation for the present paper.

\begin{thm}[Broer]\cite{broer}\label{thm:broer}
Let $V$ be a small representation of $\Gv$.  Then there are natural graded isomorphisms
\begin{align}
(\C[\cNv] \otimes V)^\Gv &\cong (\Coinv(W) \otimes V^\Tv)^W, \label{eqn:broer-neq} \\
(\C[\fgv] \otimes V)^\Gv &\cong (\C[\ftv] \otimes V^\Tv)^W. \label{eqn:broer-eq}
\end{align}
\end{thm}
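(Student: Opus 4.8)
The plan is to deduce both isomorphisms from Theorem~\ref{thm:sss} by reinterpreting each side as hypercohomology of an appropriate complex on $\cN$. First I would recall the sheaf-theoretic form of the graded $W$-module $(\Coinv(W)\otimes V^\Tv)^W$: the coinvariant ring arises as the cohomology of the Springer sheaf, so for a $W$-representation $E$ the graded multiplicity space $(\Coinv(W)\otimes E)^W$ is computed by the stalk cohomology of $\Spr(E)$ at $0\in\cN$ (equivalently, by $\bigoplus_i \cH^i_0(\Spr(E))$ with its grading). Dually, via Chevalley restriction and the fact that $\C[\ftv]\otimes E^W$ is the global-sections version of this picture, $(\C[\ftv]\otimes E)^W$ is the hypercohomology of $\Spr(E)$ over all of $\cN$. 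On the other side, Lusztig's theory of $q$-analogues of weight multiplicities (already invoked in Remark~\ref{rmk:slice} and Lemma~\ref{lem:E-stalkdim}) identifies $(\C[\cNv]\otimes V)^\Gv$, as a graded vector space, with the stalk cohomology of $\Sat(V)$ at the base point $\bo\in\Gr$, and $(\C[\fgv]\otimes V)^\Gv$ with the full (hyper)cohomology of $\Sat(V)$ over $\Gr$ (or, what amounts to the same after restricting to the conical affine variety $\Grom$, over $\Grom$).

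The key step is then to transport these cohomological descriptions across the map $\pi$. Since $\pi:\cM\to\cNsm$ is finite (Theorem~\ref{thm:mn}) and $j:\cM\hookrightarrow\Grsm$ is an open inclusion into a conical variety with cone point $\bo$, we have
\[
\cH^i_{\bo}(\Sat(V)) \;\cong\; \cH^i_{\bo}(\Sat(V)|_{\cM}) \;\cong\; \cH^i_{\pi(\bo)}\bigl(\pi_*(\Sat(V)|_{\cM})\bigr),
\]
using that $\pi(\bo)=0$ and that $\pi$ is finite (so $\pi_*$ commutes with taking stalks). Now apply the isomorphism \eqref{eqn:sssobj} of Theorem~\ref{thm:sss} (in the case-free form of Remark~\ref{rmk:sss-g2}, so that type $G_2$ is covered): the right-hand side becomes $\cH^i_0(\Spr(V^\Tv\otimes\epsilon))$, which as explained above computes $(\Coinv(W)\otimes V^\Tv\otimes\epsilon)^W \cong (\Coinv(W)\otimes V^\Tv)^W$ after absorbing the sign character (the sign twist is exactly what distinguishes $\Phi$ from the naive zero-weight-space functor, and it cancels the sign appearing in the Springer description of the coinvariant ring). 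This yields \eqref{eqn:broer-neq}. For \eqref{eqn:broer-eq} one runs the identical argument with global hypercohomology in place of stalks at the cone point: $H^*(\Grom,\Sat(V)|_{\Grom}) \cong H^*(\cM,\Sat(V)|_{\cM}) \cong H^*(\cNsm,\pi_*(\Sat(V)|_{\cM})) \cong H^*(\cN,\Spr(V^\Tv\otimes\epsilon))$, the last step again by Theorem~\ref{thm:sss}, and this final group computes $(\C[\ftv]\otimes V^\Tv)^W$.

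The main obstacle is \emph{gradings and naturality}, not the underlying vector-space identities. Each of the three translation steps — Lusztig's $q$-analogue identification on the Satake side, the Springer-theoretic description of the coinvariant ring, and the isomorphism of Theorem~\ref{thm:sss} — a priori gives only an isomorphism of graded vector spaces; to obtain graded \emph{ring}-module isomorphisms that are \emph{natural} in $V$ one must check compatibility with the degree-$2$ generators and with morphisms of representations. On the Satake side the relevant module structure over $\C[\fgv]^\Gv\cong\C[\ftv]^W$ comes from the equivariant cohomology / the commutative algebra object structure on the regular representation under geometric Satake; on the Springer side it is the standard $\C[\ftv]^W$-module structure on Springer fibres. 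I would verify that $\pi$ and $j$ are compatible with these structures (both are built from the evident algebraic maps, and $\pi$ is $G$-equivariant, so this is a matter of tracking the identifications through Lemma~\ref{lem:commute} and the conical structure), and that the shift by $\epsilon$ is the only discrepancy. Since Theorem~\ref{thm:sss} is proved object-by-object on simple objects and both categories are semisimple, naturality in $V$ follows once the generating degree-$2$ piece is handled. A cleaner alternative, once the sequel \cite{ahr} is available, is to note that the conceptual proof there produces the isomorphism of Theorem~\ref{thm:sss} as an isomorphism of functors compatible with the monoidal/central structures, from which \eqref{eqn:broer-neq} and \eqref{eqn:broer-eq} follow formally; but within the present paper the empirical route above suffices.
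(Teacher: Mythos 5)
Your proposal for the first isomorphism \eqref{eqn:broer-neq} is in the right spirit but replaces the paper's structured argument by a less structured one, and your proposed derivation of the second isomorphism \eqref{eqn:broer-eq} contains a genuine gap.

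For \eqref{eqn:broer-neq}, the paper does not use stalks $\cH^\bullet_\bo(\Sat(V))$ together with Lusztig's numerical identification of stalk dimensions with generalized exponents. Instead it computes costalks $i_\bo^!\Sat(V)$ and $i_0^!\Spr(V^\Tv\otimes\epsilon)$ via $\mathrm{Ext}$-groups: $H^n(i_\bo^!\Sat(V)) \cong \Hom(\Sat(V_0),\Sat(V)[n])$, which by Ginzburg's description of the Satake $\mathrm{Ext}$-algebra is $(\C[\cNv]\otimes V)^\Gv$ in degree $n$, and similarly on the Springer side using~\cite{a:gfhl}. This is what actually gives you a \emph{natural graded} isomorphism — the module structure over $\C[\cNv]^\Gv\cong\C[\ftv]^W$, the grading, and functoriality all come for free from the $\mathrm{Ext}$-algebra formalism. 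Your appeal to the $q$-analogue result gives only an equality of Poincar\'e polynomials, so the ``gradings and naturality'' worry you flagged at the end is exactly the hole in your argument, and the paper's $\mathrm{Ext}$-group formulation is precisely the tool that fills it. (Also, one must use $i_0^!$ rather than $i_0^*$ to match the direction of the $\mathrm{Ext}$-group identification, and one needs the observation, recorded in the paper via~\cite[Theorem~24.8(c)]{lusztig:cs5}, that the projection $(\cdot)_\bSpr$ can be dropped after applying $i_0^!$; your proposal doesn't address this.)

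The derivation you propose for \eqref{eqn:broer-eq} does not work. You suggest replacing the stalk at the cone point by global hypercohomology over $\Grom$ (resp.\ over $\cN$). But $\cM$ and $\cN$ are cones, and $\Grom$ contracts to $\bo$: for a conical complex, global hypercohomology coincides with the stalk at the vertex. So $H^\bullet(\cM,\Sat(V)|_\cM)\cong\cH^\bullet_\bo(\Sat(V))$ and $H^\bullet(\cN,\Spr(E))\cong\cH^\bullet_0(\Spr(E))$, and your ``global'' argument collapses to a second proof of \eqref{eqn:broer-neq}, not a proof of \eqref{eqn:broer-eq}. The paper instead passes to the $G$-equivariant derived category: for the relevant semisimple objects $\cF$ one has $H^\bullet_G(\cF)\cong H^\bullet(\cF)\otimes H^\bullet_G(\uC)$, and $H^\bullet_G(\uC)\cong\C[\fgv]^\Gv\cong\C[\ftv]^W$. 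Tensoring \eqref{eqn:broer-neq} with this ring and using $\C[\cNv]\otimes\C[\fgv]^\Gv\cong\C[\fgv]$ and $\Coinv(W)\otimes\C[\ftv]^W\cong\C[\ftv]$ yields \eqref{eqn:broer-eq}. This equivariant-cohomology step is the missing idea in your proposal.
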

\begin{proof}
Let $i_\bo: \{\bo\} \hookrightarrow \cM$ and $i_0: \{0\} \hookrightarrow \cN$ denote the inclusion maps.  According to~\cite[Theorem~24.8(c)]{lusztig:cs5}, if $\cF \in \Perv_G(\cN)$ is a simple perverse sheaf not occurring in the Springer correspondence, then $i_0^*\cF = 0$.  The dual statement that $i_0^!\cF = 0$ also holds, because the Verdier dual of $\cF$ cannot occur in the Springer correspondence either.  Combining this observation with Remark~\ref{rmk:sss-g2}, we obtain an isomorphism
\begin{equation}\label{eqn:costalk-broer}
i_\bo^! \Sat(V) \cong
i_0^! \Spr(V^\Tv \otimes \epsilon)
\end{equation}
in the derived category of sheaves on a point.  Since $\pi$, $i_\bo$, and $i_0$ are all $G$-equivariant, the isomorphism~\eqref{eqn:costalk-broer} also holds in the $G$-equivariant derived category of a point.  The isomorphisms~\eqref{eqn:broer-neq} and~\eqref{eqn:broer-eq} will be obtained by computing the cohomology of both sides of~\eqref{eqn:costalk-broer} in the ordinary and equivariant derived categories, respectively.

For the left-hand side, note that the skyscraper sheaf $i_{\bo*}\uC$ is isomorphic to $\Sat(V_0)$, where 
$V_0$ is the trivial representation.  Ext-groups between perverse sheaves on $\Gr$ are described in~\cite[Proposition~1.10.4]{ginzburg} in terms of $\Gv$-equivariant modules over $\C[\cNv]$.  Using that result, we have
\begin{multline*}
H^n(i_\bo^!\Sat(V)) 
\cong \Hom(\uC, i_\bo^!\Sat(V)[n])\\ \cong \Hom(\Sat(V_0), \Sat(V)[n]) 
\cong \Hom_{\Gv,\C[\cNv]}(\C[\cNv], \C[\cNv] \otimes V\la n\ra),
\end{multline*}
where $\la n\ra$ denotes a shift of grading by $n$ on a graded module.  Thus,
\begin{equation}
H^\bullet(i_\bo^!\Sat(V)) \cong (\C[\cNv] \otimes V)^\Gv.
\end{equation}
Similarly, for the right-hand side, we have $i_{0*}\uC \cong \Spr(\epsilon)$, so
\begin{multline*}
H^n(i_0^!\Spr(V^\Tv \otimes \epsilon))
 \cong \Hom(\uC, i_0^!\Spr(V^\Tv \otimes \epsilon)[n]) \\ \cong \Hom(\Spr(\epsilon), \Spr(V^\Tv \otimes \epsilon)[n])
\cong (\epsilon \otimes \Coinv(W) \otimes (V^\Tv \otimes \epsilon)\la n\ra)^W,
\end{multline*}
where the last step uses the computation of Ext-groups on $\cN$ in~\cite[Theorem~4.6]{a:gfhl}.  We conclude that
\begin{equation}
H^\bullet(i_0^!\Spr(V^\Tv \otimes \epsilon)) \cong (\Coinv(W) \otimes V^\Tv)^W,
\end{equation}
and then~\eqref{eqn:broer-neq} follows.

Finally, it is known that the cohomology of both sides of~\eqref{eqn:costalk-broer} vanishes in odd degrees.  From this, it can be deduced that both sides of~\eqref{eqn:costalk-broer} are semisimple objects in the $G$-equivariant derived category of a point.  For such an object $\cF$, the $G$-equivariant cohomology is simply given by $H^\bullet_G(\cF) \cong H^\bullet(\cF) \otimes H^\bullet_G(\uC)$.  Using the fact that $H^\bullet_G(\uC) \cong \C[\ftv]^W \cong \C[\fgv]^\Gv$, we have
\begin{align*}
H^\bullet_G&(i_\bo^!\Sat(V)) &
H^\bullet_G&(i_0^!\Spr(V^\Tv \otimes \epsilon)) \\
  &\cong (\C[\cNv] \otimes V)^\Gv \otimes \C[\fgv]^\Gv &
  &\cong (\Coinv(W) \otimes V^\Tv)^W \otimes \C[\ftv]^W \\
  &\cong (\C[\cNv] \otimes \C[\fgv]^\Gv \otimes V)^\Gv  &
  &\cong (\Coinv(W) \otimes \C[\ftv]^W \otimes V^\Tv)^W \\
  &\cong (\C[\fgv] \otimes V)^\Gv,  &
  &\cong (\C[\ftv] \otimes V^\Tv)^W.   
\end{align*}
In the last step, we have used the facts that $\C[\cNv] \otimes \C[\fgv]^\Gv \cong \C[\fgv]$ and that $\Coinv(W) \otimes \C[\ftv]^W \cong \C[\ftv]$.  (For $H^\bullet_G(i_\bo^!\Sat(V))$, we could have used~\cite{gr} or \cite[Theorem~4]{bf} instead.)  The isomorphism~\eqref{eqn:broer-eq} follows.
\end{proof}

\begin{rmk} \label{rmk:disappointment}
The proof of Remark~\ref{rmk:sss-g2} given in this paper relies on Reeder's computations of the $W$-action on $V^\Tv$, and in type $E$, Reeder used Broer's result to carry out the computation. So the above proof of Broer's result is circular in type $E$. The proof of Remark~\ref{rmk:sss-g2} given in \cite{ahr} does not rely on Reeder's computations, and thus avoids this circularity.
\end{rmk}

\end{document}